\newcommand{\comment}[1]{}
\def\tn{\textnormal}
\def\mc{\mathcal}
\def\ZZ{{\mathbb Z}}
\def\RR{{\mathbb R}}
\def\CC{{\mathbb C}}
\def\AA{{\mathbb A}}
\def\PP{{\mathbb P}}
\def\NN{{\mathbb N}}
\def\Tor{\tn{Tor}}
\def\coker{\tn{coker }}
\def\Spec{\tn{Spec }}
\def\dim{\tn{dim}}
\def\rank{\tn{rank}}
\def\alg{{\bf \tn{-alg}}}
\def\Hom{\tn{Hom}}
\def\Map{\tn{Map}}
\def\Op{\tn{Op}}
\def\to{\rightarrow}
\def\from{\leftarrow}
\def\cross{\times}
\def\taking{\colon}
\def\inj{\hookrightarrow}
\def\too{\longrightarrow}
\def\ss{\subset}
\def\iso{\cong}
\def\m1{{-1}}
\def\|{{\;|\;}}
\def\op{^\tn{op}}
\def\loc{\tn{loc}}
\def\wt{\widetilde}
\def\we{\simeq}
\def\ullimit{\ar@{}[rd]|(.3)*+{\ulcorner}}
\def\urlimit{\ar@{}[ld]|(.3)*+{\urcorner}}
\def\lllimit{\ar@{}[ru]|(.3)*+{\llcorner}}
\def\lrlimit{\ar@{}[lu]|(.3)*+{\lrcorner}}
\def\ulhlimit{\ar@{}[rd]|(.3)*+{\diamond}}
\def\urhlimit{\ar@{}[ld]|(.3)*+{\diamond}}
\def\llhlimit{\ar@{}[ru]|(.3)*+{\diamond}}
\def\lrhlimit{\ar@{}[lu]|(.3)*+{\diamond}}
\newcommand{\clabel}[1]{\ar@{}[rd]|(.5)*+{#1}}
\newcommand{\arr}[1]{\ar@<.5ex>[#1]\ar@<-.5ex>[#1]}
\newcommand{\arrr}[1]{\ar@<.7ex>[#1]\ar@<0ex>[#1]\ar@<-.7ex>[#1]}
\newcommand{\arrrr}[1]{\ar@<.9ex>[#1]\ar@<.3ex>[#1]\ar@<-.3ex>[#1]\ar@<-.9ex>[#1]}
\newcommand{\arrrrr}[1]{\ar@<1ex>[#1]\ar@<.5ex>[#1]\ar[#1]\ar@<-.5ex>[#1]\ar@<-1ex>[#1]}
\newcommand{\To}[1]{\xrightarrow{#1}}
\newcommand{\From}[1]{\xleftarrow{#1}}
\newcommand{\push}[4]{\xymatrix{#1\ar[r]\ar[d] \ar@{}[rd]|(.7)*+{\lrcorner} & #2 \ar[d] \\ #3 \ar[r] & #4}}
\newcommand{\Push}[8]{\xymatrix{#1\ar[r]^-{#5}\ar[d]_-{#6} \ar@{}[rd]|(.7)*+{\lrcorner} & #2 \ar[d]^-{#7} \\ #3 \ar[r]_-{#8} & #4}}
\newcommand{\pull}[4]{\xymatrix{#1\ar[r]\ar[d] \ar@{}[rd]|(.3)*+{\ulcorner} & #2 \ar[d] \\ #3 \ar[r] & #4}}
\newcommand{\Pull}[8]{\xymatrix{#1\ar[r]^-{#5}\ar[d]_-{#6} \ar@{}[rd]|(.3)*+{\ulcorner} & #2 \ar[d]^-{#7} \\ #3 \ar[r]_-{#8} & #4}}
\newcommand{\hpush}[4]{\xymatrix{#1\ar[r]\ar[d] \ar@{}[rd]|(.7)*+{\diamond} & #2 \ar[d] \\ #3 \ar[r] & #4}}
\newcommand{\hPush}[8]{\xymatrix{#1\ar[r]^{#5}\ar[d]_{#6} \ar@{}[rd]|(.7)*+{\diamond} & #2 \ar[d]^{#7} \\ #3 \ar[r]_{#8} & #4}}
\newcommand{\hpull}[4]{\xymatrix{#1\ar[r]\ar[d] \ar@{}[rd]|(.3)*+{\diamond} & #2 \ar[d] \\ #3 \ar[r] & #4}}
\newcommand{\hPull}[8]{\xymatrix{#1\ar[r]^-{#5}\ar[d]_-{#6} \ar@{}[rd]|(.3)*+{\diamond} & #2 \ar[d]^-{#7} \\ #3 \ar[r]_-{#8} & #4}}
\newcommand{\Sq}[8]{\xymatrix{#1\ar[r]^-{#5}\ar[d]_-{#6} & #2 \ar[d]^-{#7} \\ #3 \ar[r]_-{#8} & #4}}
\newcommand{\adjoint}[2]{\xymatrix@1{#1\ar@<.5ex>[r] & #2 \ar@<.5ex>[l]}}
\newcommand{\Adjoint}[4]{\xymatrix@1{#2 \ar@<.5ex>[r]^-{#1} & #3 \ar@<.5ex>[l]^-{#4}}}
\def\id{\tn{id}}
\def\Sets{{\bf Sets}}
\def\sSets{{\bf sSets}}
\def\colim{\mathop{\tn{colim}}}
\def\hocolim{\mathop{\tn{hocolim}}}
\def\mcA{\mc{A}}
\def\mcB{\mc{B}}
\def\mcC{\mc{C}}
\def\mcD{\mc{D}}
\def\mcF{\mc{F}}
\def\mcG{\mc{G}}
\def\mcM{\mc{M}}
\def\mcN{\mc{N}}
\def\mcO{\mc{O}}
\def\mcP{\mc{P}}
\def\mcU{\mc{U}}
\def\mcV{\mc{V}}
\def\mcW{\mc{W}}
\def\mcX{\mc{X}}
\def\mcY{\mc{Y}}
\def\mcZ{\mc{Z}}
\def\tensor{\otimes}
\newtheorem{theorem}{Theorem}[section]
\newtheorem{lemma}[theorem]{Lemma}
\newtheorem{proposition}[theorem]{Proposition}
\newtheorem{corollary}[theorem]{Corollary}
\theoremstyle{remark}
\newtheorem{remark}[theorem]{Remark}
\newtheorem{example}[theorem]{Example}
\newtheorem{question}[theorem]{Question}
\newtheorem{guess}[theorem]{Guess}
\newtheorem{convention}[theorem]{Convention}
\theoremstyle{definition}
\newtheorem{definition}[theorem]{Definition}
\newtheorem{notation}[theorem]{Notation}
\def\Man{{\bf Man}}
\def\dMan{{\bf dMan}}
\def\CG{{\bf CG}}
\def\dM{{\bf dM}}
\def\U{{\bf U}}
\def\i{{\bf i}}
\def\EE{{\mathbb E}}
\def\E{\EE}
\def\sC{{s{\bf C^\infty}}}
\def\sR{{s\RR}}
\def\Shv{{\bf Shv}}
\def\LRS{{\bf LRS}}
\def\res{\hspace{-1pt}\centerdot}
\def\TT{{\mathbb T}}
\def\Cyl{{\bf Cyl}}
\numberwithin{equation}{theorem}
\begin{document}

\title{Derived smooth manifolds}
\author{David I. Spivak}

\begin{abstract}

We define a simplicial category called the category of derived manifolds.  It contains the category of smooth manifolds as a full discrete subcategory, and it is closed under taking arbitrary intersections in a manifold.  A derived manifold is a space together with a sheaf of local $C^\infty$-rings that is obtained by patching together homotopy zero-sets of smooth functions on Euclidean spaces.  

We show that derived manifolds come equipped with a stable normal bundle and can be imbedded into Euclidean space.  We define a cohomology theory called derived cobordism, and use a Pontrjagin-Thom argument to show that the derived cobordism theory is isomorphic to the classical cobordism theory.  This allows us to define fundamental classes in cobordism for all derived manifolds.  In particular, the intersection $A\cap B$ of submanifolds $A,B\ss X$ exists on the categorical level in our theory, and a cup product formula $$[A]\smile[B]=[A\cap B]$$ holds, even if the submanifolds are not transverse.  One can thus consider the theory of derived manifolds as a {\em categorification} of intersection theory.  

\end{abstract}

\maketitle
    
\setcounter{tocdepth}{1}
 
\tableofcontents 

\section{Introduction}

Let $\Omega$ denote the unoriented cobordism ring (though what we will say applies to other cobordism theories as well, e.g. oriented cobordism).  The fundamental class of a compact manifold $X$ is an element $[X]\in\Omega$.  By the Pontrjagin-Thom construction, such an element is classified by a homotopy class of maps from a large enough sphere $S^n$ to a large enough Thom space $MO$.  One can always choose a map $f\taking S^n\to MO$ which represents this homotopy class, is smooth (away from the basepoint), and meets the zero-section $B\ss MO$ transversely.  The pullback $f^\m1(B)$ is a compact manifold which is cobordant to $X$, so we have an equality $[X]=[f^\m1(B)]$ of elements in $\Omega$.

This construction provides a correspondence which is homotopical in nature: one begins with a homotopy class of maps and receives a cobordism class.  However, it is close to existing on the nose, in that a dense subset of all representing maps $f\taking S^n\to MO$ will be transverse to $B$ and yield an imbedded manifold rather than merely its image in $\Omega$.  If transversality were not an issue, Pontrjagin-Thom would indeed provide a correspondence between smooth maps $S^n\to MO$ and their zero-sets.

The purpose of this paper is to introduce the category of derived manifolds wherein non-transverse intersections make sense.  In this setting, $f^\m1(B)$ is a derived manifold which is derived cobordant to $X$, regardless of one's choice of smooth map $f$, and in terms of fundamental cobordism classes we have $[f^\m1(B)]=[X]$.  Our hope is that by using derived manifolds, researchers can avoid having to make annoying transversality arguments.  This could be of use in string topology or Floer homology, for example.

As an example, let us provide a short story that can only take place in the derived setting.  Consider the case of a smooth degree $d$ hypersurface $X\ss\CC P^3$ in complex projective space.  One can express the $K$-theory fundamental class of $X$ as \begin{equation}\label{eqn:hypersurface}[X]={d \choose 1}[\CC P^2]-{d \choose 2}[\CC P^1]+{d\choose 3}[\CC P^0].\end{equation}  It may be difficult to see exactly where this formula comes from; let us give the derived perspective, which should make the formula more clear.  

The union $Y$ of $d$ distinct hyperplanes in $\CC P^3$ is not smooth, but it does exist as an object in the category of derived manifolds.  Moreover, as the zero-set of a section of the line bundle $\mcO(d)$, one has that $Y$ is a degree $d$ derived submanifold of $\CC P^3$ which is derived cobordant to $X$.  As such, the fundamental class of $Y$ is equal to that of $X$, i.e. $[Y]=[X]$.  

The point is that the above formula (\ref{eqn:hypersurface}) takes on added significance as the fundamental class of $Y$, because it has the form of an inclusion-exclusion formula.  One could say that the $K$-theory fundamental class of $Y$ is obtained by adding the fundamental classes of $d$ hyperplanes, subtracting off the fundamental classes of the over-counted ${d\choose 2}$ lines of 2-fold intersection, and adding back the fundamental classes of the missed ${d\choose 3}$ points of 3-fold intersection.  Hopefully, this convinces the reader that derived manifolds may be of use.

To construct the virtual fundamental class on an arbitrary intersection of compact submanifolds, we follow an idea of Kontsevich \cite[Section 1.4]{Kont}, explained to us by Jacob Lurie.  Basically, we take our given intersection $\mcX=A\cap B$, realize it as the zeroset of a section of a vector bundle, and then deform that section until it is transverse to zero.  The result is a derived cobordism between $\mcX$ and a smooth manifold.   

While dispensing with the transversality requirement for intersecting manifolds is appealing, it does come with a cost, namely that defining the category of derived manifolds is a bit technical.  However, anyone familiar with homotopy sheaves will not be too surprised by our construction.  One starts with Lawvere's algebraic theory of $C^\infty$-rings, which are rings whose elements are closed under composition with smooth functions.  Simplicial (lax) $C^\infty$-rings are the appropriate homotopy-theoretic analogue and as such are objects in a simplicial model category.  We then form the category of local $C^\infty$-ringed spaces, wherein an object is a topological space together with a homotopy sheaf of simplicial $C^\infty$-rings whose stalks are local rings.  Euclidean space, with its (discrete) $C^\infty$-ring of smooth real-valued functions is such an object, and the zero-set of finitely many smooth functions on Euclidean space is called an affine derived manifold.  A derived manifold is a local $C^\infty$-ringed space which is obtained by patching together affine derived manifolds.  See Definition \ref{def:dman}.

\begin{notation}\label{homotopy limits}

Let $\sSets$ denote the monoidal category of simplicial sets.  A simplicial category $\mcC$ is a category enriched over $\sSets$; we denote the mapping space functor $\Map_\mcC(-,-)$.  If all of the mapping spaces in $\mcC$ are fibrant (i.e. Kan complexes), we call $\mcC$ {\em fibrant} as a simplicial category; in the following discussion we will be considering only this case.  By a {\em map} between objects $X$ and $Y$ in $\mcC$, we mean a 0-simplex in $\Map_\mcC(X,Y)$.

An object $X\in\mcC$ is called {\em homotopy initial} if for every $Y\in\mcC$, the mapping space $\Map(X,Y)$ is contractible.  Similarly, $X$ is called {\em homotopy terminal} if for every $Y\in\mcC$, the mapping space $\Map(Y,X)$ is contractible.  We say that a vertex in a contractible space is {\em homotopy-unique}.  We sometimes abuse notation and refer to a contractible space as though it were just a single point, saying something like ``{\em the} natural map $Y\to X$."     

Let $\mcC$ be a simplicial category.  The homotopy pullback of a diagram $A\To{f} B\From{g} C$ is a diagram \begin{align}\label{dia:hlimit}\hPull{A\cross_BC}{C}{A}{B.}{f'}{g'}{g}{f}\end{align} By this we mean an object $A\cross_BC$ equipped with maps $g',h',$ and $f'$ to $A, B,$ and $C$ respectively, and further equipped with homotopies between $gf'$ and $h'$ and between $fg'$ and $h'$.  Finally, we require that $A\cross_BC$ is homotopy terminal in the category of such objects.  More succinctly, Diagram \ref{dia:hlimit} expresses that for any object $X\in\mcC$ the natural map $$\Map(X,A\cross_BC)\to\Map(X,A)\cross^h_{\Map(X,B)}\Map(X,C)$$ is a weak equivalence in the usual model category of simplicial sets (see \cite[7.10.8]{Hir}), where by $\cross^h$ we mean the homotopy pullback in $\sSets$.  The diamond in the upper left corner of the square in Diagram \ref{dia:hlimit} serves to remind the reader that object in the upper left corner is a homotopy pullback, and that the diagram does not commute on the nose but up to chosen homotopies.  We can define homotopy pushouts similarly.

Two objects $X$ and $Y$ in $\mcC$ are said to be {\em equivalent} if there exist maps $f\taking X\to Y$ and $g\taking Y\to X$ such that $g\circ f$ and $f\circ g$ are homotopic to the identity maps on $X$ and $Y$.  By \cite[1.2.4.1]{Lur-HTT}, this is equivalent to the assertion that the map $\Map(Z,X)\to\Map(Z,Y)$ is a weak equivalence for all $Z\in\mcC$.  

If $\mcC$ is a discrete simplicial category (i.e. a category in the usual sense) then the homotopy pullback of a diagram in $\mcC$ is the pullback of that diagram.  The pullback of a diagram $A\To{f} B\From{g} C$ is a commutative diagram $$\Pull{A\cross_BC}{C}{A}{B.}{f'}{g'}{g}{f}$$  The symbol in the upper left corner serves to remind the reader that the object in the upper left corner is a pullback in the usual sense.  Two objects are equivalent if and only if they are isomorphic.

\begin{remark}\label{rem:model cats}

If $\mcC$ is a simplicial model category (see, e.g. \cite{Hir} for an introduction to this subject), then the full subcategory of cofibrant-fibrant objects is a simplicial category in which all mapping spaces are fibrant.  Moreover, we can replace any diagram with a diagram of the same shape in which all objects are cofibrant-fibrant.  Our definitions of homotopy pullback, homotopy pushout, and equivalence match the model category terminology.  In keeping with this, if we are in the setting of model categories, the result of a construction (such as taking a homotopy limit) will always be assumed cofibrant and fibrant.

\end{remark}

{\bf Whenever we speak of pullbacks in a simplicial category, we are always referring to homotopy pullbacks unless otherwise specified.}  Similarly, whenever we speak of terminal (resp. initial) objects, we are always referring to homotopy terminal (resp. homotopy initial) objects.  Finally, we sometimes use the word ``category" to mean ``simplicial category."    

Given a functor $F\taking\mcC\to\mcD$, we say that an object $D\in\mcD$ is in the {\em essential image} of $F$ if there is an object $C\in\mcC$ such that $F(C)$ is equivalent to $D$.

We denote the category of smooth manifolds by $\Man$; whenever we refer to a manifold, it is always assumed smooth.  It is discrete as a simplicial category.  In other words, we {\em do not} include any kind of homotopy information in $\Man$.  

\end{notation}

We now recall a few well-known facts and definitions about $\Man$.  Every manifold $A$ has a tangent bundle $T_A\to A$ which is a vector bundle whose rank is equal to the dimension of $A$.  A morphism of smooth manifolds $f\taking A\to B$ induces a morphism $T_f\taking T_A\to f^*T_B$ of vector bundles on $A$, from the tangent bundle of $A$ to the pullback of the tangent bundle on $B$.  We say that $f$ is an {\em immersion} if $T_f$ is injective and we say that $f$ is a {\em submersion} if $T_f$ is surjective.  A pair of maps $f\taking A\to B$ and $g\taking C\to B$ are called {\em transverse} if the induced map $f\amalg g\taking A\amalg C\to B$ is a submersion.  If $f$ and $g$ are transverse, then their fiber product (over $B$) exists in $\Man$.\\

\subsection{Results}

In this paper we hope to convince the reader that we have a reasonable category in which to do intersection theory on smooth manifolds.  The following definition expresses what we mean by ``reasonable."  Definition \ref{general cup} expresses what we mean by ``doing intersection theory" on such a category.  The main result of the paper, Theorem \ref{main theorem}, is that there is a simplicial category which satisfies Definition \ref{general cup}

\begin{definition}\label{def geo}

A simplicial category $\mcC$ is called {\em geometric} if it satisfies the following axioms:\begin{enumerate}

\item\label{Axiom fib}{\bf Fibrant. } For any two objects $\mcX,\mcY\in\mcC$, the mapping space $\Map_\mcC(\mcX,\mcY)$ is a fibrant simplicial set.

\item\label{Axiom sm} {\bf Smooth manifolds. } There exists a fully faithful functor $\i\taking\Man\to\mcC$.  We say that $M\in\mcC$ is a manifold if it is in the essential image of $\i$.

\item\label{Axiom ml}{\bf Manifold limits. } The functor $\i$ commutes with transverse intersections.  That is, if $A\to M$ and $B\to M$ are transverse, then a homotopy limit $\i(A)\cross_{\i(M)}\i(B)$ exists in $\mcC$ and the natural map $$\i(A\cross_MB)\to\i(A)\cross_{\i(M)}\i(B)$$ is an equivalence in $\mcC$.  

Furthermore, the functor $\i$ preserves the terminal object (i.e. the object $\i(\RR^0)$ is homotopy terminal in $\mcC$).

\item\label{Axiom us} {\bf Underlying spaces. } Let $\CG$ denote the discrete simplicial category of compactly generated Hausdorff spaces.  There exists an ``underlying space" functor $\U\taking\mcC\to\CG$, such that the diagram $$\xymatrix{\Man\ar[r]^\i\ar[d]&\mcC\ar[dl]^\U\\ \CG}$$ commutes, where the vertical arrow is the usual underlying space functor on smooth manifolds.  Furthermore, the functor $\U$ commutes with finite limits when they exist.

\end{enumerate}

\begin{remark}\label{rem:top terminology}

When we speak of an object (respectively, a morphism or a set of morphisms) in $\mcC$ having some topological property (e.g. Hausdorff or compact object, proper morphism, open cover, etc.), we mean that the underlying object (resp. the underlying morphism or set of morphisms) in $\CG$ has that property.

\end{remark}

\end{definition}

Since any discrete simplicial category has fibrant mapping spaces, it is clear that $\Man$ and $\CG$ are geometric.   

If $\mcC$ is a geometric category and $M\in\Man$ is a manifold, we generally abuse notation and write $M$ in place of $\i(M)$, as though $M$ were itself an object of $\mcC$.

\begin{remark}

Note that in Axiom \ref{Axiom ml} we are not requiring that $\i$ commute with all limits which exist in $\Man$, only those which we have deemed appropriate.  For example, if one has a line $L$ and a parabola $P$ which meet tangentially in $\RR^2$, their fiber product $L\cross_{\RR^2}P$ {\em does} exist in the category of manifolds (it is a point).  However, limits like these are precisely the kind we wish to avoid!  We are searching for a category in which intersections are in some sense stable under perturbations (see Definition \ref{general cup}, Condition (\ref{Cond cpf})), and thus we should not ask $\i$ to preserve intersections which are not stable in this sense.

\end{remark}

\begin{remark}

In all of the axioms of Definition \ref{def geo}, we are working with simplicial categories, so when we speak of pullbacks and puhouts, we mean homotopy pullbacks and homotopy puhouts.  Axiom \ref{Axiom us} requires special comment however.  We take $\CG$, the category of compactly generated hausdorff spaces, as a {\em discrete} simplicial category, so homotopy pullbacks and pushouts are just pullbacks and pushouts in the usual sense.  The underlying space functor $\U$ takes finite homotopy pullbacks in $\mcC$ to pullbacks in $\CG$.

\end{remark}

Again, our goal is to find a category in which intersections of arbitrary submanifolds exist at the categorical level and descend correctly to the level of cobordism rings.  We make this precise in the following definition.

\begin{definition}\label{general cup}

We say that a simplicial category $\mcC$ {\em has the general cup product formula in cobordism} if the following conditions hold.  \begin{enumerate}

\item \label{Cond geo}{\bf Geometric. } The simplicial category $\mcC$ is geometric in the sense of Definition \ref{def geo}.

\item \label{Cond int}{\bf Intersections. } If $M$ is a manifold and $A$ and $B$ are submanifolds (possibly not transverse), then there exists a homotopy pullback $A\cross_MB$ in $\mcC$, which we denote by $A\cap B$ and call the {\em derived intersection} of $A$ and $B$ in $M$.  

\item \label{Cond db}{\bf Derived cobordism. } There exists an equivalence relation on the compact objects of $\mcC$ called derived cobordism, which extends the cobordism relation on manifolds.  That is, for any manifold $T$, there is a ring $\Omega^{der}(T)$ called {\em the derived cobordism ring over $T$}, and the functor $\i\taking\Man\to\mcC$ induces a homomorphism of cobordism rings over $T$, $$\i_*\taking\Omega(T)\to\Omega^{der}(T).$$  We further impose the condition that $\i_*$ be an injection.

\item \label{Cond cpf}{\bf Cup product formula. }  If $A$ and $B$ are compact submanifolds of a manifold $M$ with derived intersection $A\cap B:=A\cross_MB$, then the cup product formula \begin{equation}\label{eqn cpf}[A]\smile[B]=[A\cap B]\end{equation} holds, where $[-]$ is the functor taking a compact derived submanifold of $M$ to its image in the derived cobordism ring $\Omega^{der}(M)$, and where $\smile$ denotes the multiplication operation in that ring (i.e. the cup product).  

\end{enumerate}

\end{definition}

Without the requirement (Condition \ref{Cond db}) that $\i_*\taking\Omega(T)\to\Omega^{der}(T)$ be an injection, the general cup product formula could be trivially attained.  For example, one could extend $\Man$ by including non-transverse intersections which were given no more structure than their underlying space, and the derived cobordism relation could be chosen to be maximal (i.e. one equivalence class); then the cup product formula would trivially hold.

In fact, when we eventually prove that there is a category which has the general cup product formula, we will find that $\i_*$ is not just an injection but an isomorphism (see Theorem \ref{refined main theorem}).  We did not include that as an axiom here, however, because it does not seem to us to be an inherently necessary aspect of a good intersection theory.

The category of smooth manifolds does not have the general cup product formula because it does not satisfy condition (\ref{Cond int}).  Indeed, suppose that $A$ and $B$ are submanifolds of $M$.  If $A$ and $B$ meet transversely, then their intersection will naturally have the structure of a manifold, and the cup product formula \ref{eqn cpf} will hold.  If they do not, then one of two things can happen: either their intersection cannot be given the structure of a manifold (so in the classical setting, Equation \ref{eqn cpf} does not have meaning) or their intersection can be given the structure of a smooth manifold, but it {\em will not} satisfy Equation \ref{eqn cpf}.

Therefore, we said that a category which satisfies the conditions of Definition \ref{general cup} satisfies the {\em general} cup product formula because condition (\ref{Cond cpf}) holds even for non-transverse intersections.  Of course, to accomplish this, one needs to find a more refined notion of intersection, i.e. find a setting in which homotopy limits will have the desired properties. 

Suppose that a simplicial category $\mcC$ has the general cup product formula in cobordism.  It follows that any cohomology theory $E$ which has fundamental classes for compact manifolds (i.e. for which there exists a map $MO\to E$) also has fundamental classes for compact objects of $\mcC$, and that these satisfy the cup product formula as well.  Returning to our previous example, the union of $d$ hyperplanes in complex projective space is a derived manifold which, we will see, is derived cobordant to a smooth degree $d$ hypersurface (see Example \ref{main examples}).  Thus, these two subspaces have the same $K$-theory fundamental classes, which justifies Equation \ref{eqn:hypersurface}.

Our main result is that the conditions of Definition \ref{general cup} can be satisfied.  

\begin{theorem}\label{main theorem}

There exists a simplicial category $\dMan$, called {\em the category of derived manifolds}, which has the general cup product formula in cobordism, in the sense of Definition \ref{general cup}.  

\end{theorem}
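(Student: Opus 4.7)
The proof will proceed by explicit construction of $\dMan$ followed by verification of the four conditions of Definition \ref{general cup}. The plan for the construction is the one outlined in the introduction: build the algebraic theory of simplicial $C^\infty$-rings as a simplicial model category, define the simplicial category of local $C^\infty$-ringed spaces (topological spaces equipped with a homotopy sheaf of simplicial $C^\infty$-rings whose stalks are local), and declare an \emph{affine derived manifold} to be the homotopy zero set of a smooth map $f\colon\RR^n\to\RR^k$, i.e.\ the homotopy pullback of $\RR^n\to\RR^k\leftarrow\RR^0$ computed in the category of local $C^\infty$-ringed spaces. Then $\dMan$ is the full simplicial subcategory consisting of those objects that admit an open cover by affine derived manifolds.

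To verify the geometric axioms, fibrancy of mapping spaces comes from working with cofibrant-fibrant objects in the ambient simplicial model category. The functor $\i\colon\Man\to\dMan$ sends a manifold $M$ to the pair consisting of $M$ together with its discrete structure sheaf of smooth functions; full faithfulness reduces to the fact that morphisms of locally $C^\infty$-ringed spaces between ordinary smooth manifolds are precisely the smooth maps. Preservation of transverse fiber products is the classical Koszul resolution argument: when $A\to M\leftarrow B$ is transverse, the local equations cutting $A$ out of $M$ form a regular sequence modulo those cutting out $B$, so the derived tensor product of structure sheaves collapses onto the ordinary one. Preservation of $\RR^0$ and the construction of $\U$ are immediate. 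Condition (\ref{Cond int}) is then handled by exhibiting the homotopy pullback locally: on a coordinate patch where $A=f^\m1(0)$ and $B=g^\m1(0)$, the intersection is the affine derived manifold cut out by the pair $(f,g)\colon M\to\RR^{k+\ell}$, and these local pieces glue via descent.

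The substantive work lies in Conditions (\ref{Cond db}) and (\ref{Cond cpf}). I would define derived cobordism over $T$ via compact derived manifolds with boundary mapping to $T$, giving a ring $\Omega^{der}(T)$ under disjoint union and Cartesian product. To prove the cup product formula, I follow the Kontsevich--Lurie strategy sketched in the introduction: for compact submanifolds $A,B\subset M$, use a tubular neighborhood to realize the intersection $A\cap B$ as the homotopy zero set of a section $s$ of the restriction to $B$ of the normal bundle of $A$ in $M$. The space of smooth sections of this vector bundle is path-connected, so one can deform $s$ through a one-parameter family $s_t$ to a section $s'$ transverse to the zero section; the family of homotopy zero sets gives a derived cobordism from $A\cap B$ to the smooth transverse intersection $(s')^\m1(0)$. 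Since the cup product formula is classical for transverse intersections, one obtains $[A]\smile[B]=[(s')^\m1(0)]=[A\cap B]$ in $\Omega^{der}(M)$. Running the same deformation argument globally via a derived Pontrjagin--Thom construction shows that $\i_*\colon\Omega(T)\to\Omega^{der}(T)$ is in fact an isomorphism, which strengthens the required injectivity and anticipates Theorem \ref{refined main theorem}.

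The main obstacle is setting up the homotopical framework precisely enough that the local-to-global steps actually function as advertised: namely (a) that the homotopy pullback of zero sets of smooth maps is computed on each affine chart by a Koszul-type simplicial resolution of $C^\infty$-rings, (b) that these affine pieces descend to a well-defined global object of $\dMan$ via a sheaf condition on mapping spaces, and (c) that a smooth deformation of sections of a vector bundle lifts to a genuine derived cobordism of homotopy zero sets rather than merely a topological concordance of underlying spaces. Each of these is a nontrivial statement about the behavior of homotopy sheaves of simplicial $C^\infty$-rings, and together they carry the weight of the theorem; once they are established, the verification of the axioms and the cup product formula follows the classical transverse pattern combined with the deformation trick.
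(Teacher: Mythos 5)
Your overall plan mirrors the paper's: build lax simplicial $C^\infty$-rings, then local $C^\infty$-ringed spaces, declare affine derived manifolds to be homotopy zero sets of smooth $\RR^n\to\RR^k$, take the full subcategory of locally-affine objects, and then verify the axioms. That much is on target. But there are two places where you diverge in ways that matter.

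The most concrete gap is in your definition of derived cobordism. You propose to use ``compact derived manifolds with boundary.'' Under the definitions you've just set up, a manifold with boundary is \emph{not} a derived manifold: a point on the boundary has no open neighborhood equivalent to an affine derived manifold. Making this approach work requires setting up a parallel theory with a new class of local models (half-spaces, corners, etc.), proving a boundary-imbedding theorem, etc.\ --- the paper explicitly flags this as possible but lengthy and instead defines cobordism via proper maps $f\colon\mcX\to\RR$ that are collared near $0$ and $1$, with $\mcZ_i = \mcX_{f=i}$. The $i$-collar condition and the properness are what substitute for the boundary structure. Your proposal cannot proceed as stated until this is fixed.

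The second divergence is organizational but substantive. You want to go directly from the construction of $\dMan$ to the cup product formula via a deformation-of-sections argument. That argument requires, as input, (a) an imbedding theorem: every compact derived manifold imbeds into some $\RR^N$; and (b) a normal-bundle statement: an imbedded compact derived manifold is the zero set of a section of a vector bundle on an open neighborhood in $\RR^N$. Your paragraph on the cup product assumes the intersection $A\cap B$ already sits as a zero set of a bundle section on a tubular neighborhood, but $A\cap B$ is not a priori a submanifold, so the tubular neighborhood theorem doesn't apply directly. The paper gets around this by abstracting (a) and (b) into axioms (the ``open subobjects,'' ``local models,'' ``local extensions for imbeddings,'' and ``normal bundle'' axioms), proving the imbedding theorem from them, and only then running the deformation-to-transversality argument (with Stong's regular homotopy theorem doing the perturbing) under control of the properness needed to land back in the cobordism ring. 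You acknowledge in your last paragraph that ``a smooth deformation of sections lifts to a genuine derived cobordism'' is a nontrivial claim; this is exactly where those two ingredients are consumed, and your sketch does not supply them. Finally, a small terminological note: the ``Koszul resolution argument'' in the $C^\infty$ setting is Hadamard's lemma (a smooth function vanishing on $\{x_1=0\}$ is $x_1$ times a smooth function), which is what makes the derived tensor product of $C^\infty$-rings along a coordinate projection collapse; this is the special fact that distinguishes the smooth category from, say, the topological one.
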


The category $\dMan$ is defined in Definition \ref{def:dman}, and the above theorem is proved as Theorem \ref{dman sat gfdi}.  See also Definition \ref{good for doing it} for a list of axioms satisfied by $\dMan$.

\begin{remark}

We do not offer a uniqueness counterpart to Theorem \ref{main theorem}.  We purposely left Definition \ref{general cup} loose, because we could not convince the reader that any more structure on a category $\mcC$ was necessary in order to say that it ``has the general cup product formula."  For example, we could have required that the morphism $\i_*$ be an isomorphism instead of just an injection (this is indeed the case for $\dMan$, see Corollary \ref{cor: refined main theorem}); however, doing so would be hard to justify as being necessary.  Because of the generality of Definition \ref{general cup}, we are precluded from offering a uniqueness result here.

\end{remark}

Finally, the following proposition justifies the need for simplicial categories in this work.

\begin{proposition}

If $\mcC$ is a discrete simplicial category (i.e. a category in the usual sense), then $\mcC$ cannot have the general cup product formula in cobordism.

\end{proposition}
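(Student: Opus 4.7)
The plan is to assume $\mcC$ is a discrete simplicial category satisfying Definition \ref{general cup} and derive a contradiction from a single self-intersection of a point. In a discrete category the homotopy pullback reduces to an ordinary pullback, and I expect the rigidity of $\Sets$-valued universal properties, combined with terminality of the point, to force the self-intersection to equal the point itself---which will be incompatible with the cobordism-theoretic constraint. Concretely, I take $M=S^1$ and let $A=B=\{p\}\hookrightarrow S^1$ be the inclusion of a single point; these are compact submanifolds, so Condition (\ref{Cond int}) provides a derived intersection $\mcX:=A\times_MB$ in $\mcC$, and Condition (\ref{Cond cpf}) requires $[A]\smile[B]=[\mcX]$ in $\Omega^{der}(S^1)$.

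The key step is to identify $\mcX\cong A$ in $\mcC$. By Axiom (\ref{Axiom ml}) the functor $\i$ preserves the terminal object, so $A=\i(\{p\})$ is terminal in $\mcC$. Since $\mcC$ is discrete, $\mcX$ is an ordinary categorical pullback, so for every $X\in\mcC$ we have $\Hom_\mcC(X,\mcX)=\Hom_\mcC(X,A)\times_{\Hom_\mcC(X,M)}\Hom_\mcC(X,B)$. Both $\Hom_\mcC(X,A)$ and $\Hom_\mcC(X,B)$ are singletons by terminality, so this fiber product is a singleton, which coincides with $\Hom_\mcC(X,A)$. The Yoneda lemma then gives $\mcX\cong A$ in $\mcC$.

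I next compare two values of $[\mcX]\in\Omega^{der}(S^1)$. First, $[\mcX]=[A]=\i_*[\{p\}\hookrightarrow S^1]$; since a single point cannot be the boundary of any compact $1$-manifold (parity), $[\{p\}\hookrightarrow S^1]\neq 0$ in $\Omega(S^1)$, and injectivity of $\i_*$ (Condition (\ref{Cond db})) gives $[\mcX]\neq 0$. Second, $\i_*$ is a ring homomorphism, so $[A]\smile[B]=\i_*\bigl([\{p\}]\smile[\{p\}]\bigr)$. Picking $q\neq p$, the pair $\{p\},\{q\}\hookrightarrow S^1$ has empty (hence classically transverse) intersection and $\{p\}$ is classically cobordant to $\{q\}$ via a path in $S^1$, so the classical cup product formula gives $[\{p\}]\smile[\{p\}]=[\{p\}]\smile[\{q\}]=[\emptyset]=0$ in $\Omega(S^1)$. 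Hence $[A]\smile[B]=0$, contradicting the required identity $[A]\smile[B]=[\mcX]\neq 0$.

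The only nonroutine step is the identification $\mcX\cong A$, and this is where discreteness does all the work. In any genuinely homotopical refinement, the derived self-intersection of a point would be a nontrivial infinitesimal thickening (morally $\Spec\RR[\epsilon]/(\epsilon^2)$, or its simplicial $C^\infty$-ring analogue) whose cobordism class vanishes as required. The rigidity of $\Sets$-enriched Yoneda---two objects with the same hom-functor must be isomorphic---is precisely what forbids such a thickening in the discrete setting, and so the obstruction to the general cup product formula is categorical rather than cobordism-theoretic in nature.
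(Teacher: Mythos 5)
Your proof is correct and follows essentially the same approach as the paper's: both isolate the self-intersection of a point, use terminality of $\i(\RR^0)$ together with discreteness of $\mcC$ to force the categorical pullback $\RR^0\cross_M\RR^0$ to be $\RR^0$ itself, and then derive a contradiction with Condition (\ref{Cond cpf}). The only divergence is in the final step: you work in $M=S^1$ and compute both sides explicitly (showing $[A]\smile[B]=0$ via a cobordant disjoint representative while $[A]\neq 0$ by parity and injectivity of $\i_*$), whereas the paper works in $M=\RR$ and observes that the two sides of the would-be identity live in different degrees.
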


\begin{proof}

We assume that Conditions (\ref{Cond geo}), (\ref{Cond int}), and (\ref{Cond db}) hold, and we show that Condition (\ref{Cond cpf}) cannot.

Since $\mcC$ is geometric, the object $\RR^0$ (technically $\i(\RR^0)$) is homotopy terminal in $\mcC$.  Since $\mcC$ is discrete, $\RR^0$ is terminal in $\mcC$, all equivalences in $\mcC$ are isomorphisms, and all homotopy pullbacks in $\mcC$ are categorical pullbacks.  Let $0\taking\RR^0\to\RR$ be the origin, and let $X$ be defined as the pullback in the diagram $$\Pull{X}{\RR^0}{\RR^0}{\RR.}{}{}{0}{0}$$  A morphism from an arbitrary object $Y$ to $X$ consists of two morphisms $Y\to\RR^0$ which agree under composition with $0$.  For any $Y$, there is exactly one such morphism $Y\to X$ because $\RR^0$ is terminal in $\mcC$.  That is, $X$ represents the same functor as $\RR^0$ does, so $X\iso\RR^0$, i.e. $\RR^0\cap\RR^0=\RR^0$.  This equation forces Condition (\ref{Cond cpf}) to fail. 

Indeed, to see that the cup product formula $$[\RR^0]\smile[\RR^0]=^?[\RR^0\cap\RR^0]$$ does not hold in $\Omega(\RR)$, note that the left-hand side is homogeneous of degree 2, whereas the right-hand side is homogeneous of degree 1 in the cohomology ring.

\end{proof}

\subsection{Structure of the paper}

We have decided to present this paper in a hierarchical fashion.  In the introduction, we presented the goal: to find a geometric category that has the general cup product formula in cobordism (see Definition \ref{general cup}).  

In Section 2, we present a set of axioms that suffice to achieve this goal.  In other words, any category that satisfies the axioms of Definition \ref{good for doing it} is said to be ``good for doing intersection theory on manifolds," and we prove in Theorem \ref{refined main theorem} that such a category has the general cup product formula.  Of course, we could have chosen our axioms in a trivial way, but this would not have given a useful layer of abstraction.  Instead, we chose axioms that resemble axioms satisfied by smooth manifolds.  These axioms imply the general cup product formula, but are not implied by it.

In Sections 5 - 9, we give an explicit formulation of a category that is good for doing intersection theory.  This category can be succinctly described as ``the category of homotopical $C^\infty$-schemes of finite type."  To make this precise and prove that it satisfies the axioms of Definition \ref{good for doing it} takes five sections.  We lay out our methodology for this undertaking in Section 4.

Finally, in Section 10, we discuss related constructions.  First we see the way that derived manifolds are related to Jacob Lurie's ``structured spaces" (\cite{Lur-DAG5}).  Then we discuss manifolds with singularities, Chen spaces, diffeological spaces, and synthetic $C^\infty$-spaces, all of which are generalizations of the category $\Man$ of smooth manifolds.  In this section we hope to show how the theory of derived manifolds fits into the existing literature.

\subsection{Acknowledgments}

This paper is a reformulation and a simplification of my PhD dissertation.  Essentially, my advisor for that project was Jacob Lurie, whom I thank for many enlightening and helpful conversations, as well as for suggesting the project.

I would like to thank Dan Dugger for suggestions which improved the readability of this paper tremendously, as well as for his advice throughout the rewriting process.  The referee reports were also quite useful in debugging and clarifying the paper.  I thank Peter Teichner for suggesting that I move to Boston to work directly with Jacob, as well as for many useful conversations.

Finally, I would like to thank Mathieu Anel, Tom Graber, Phil Hirschhorn, Mike Hopkins, Andr\'{e} Joyal, Dan Kan, Grace Lyo, Haynes Miller, and Dev Sinha for useful conversations and encouragement at various stages of this project.

\section{The axioms}

Theorem \ref{main theorem} makes clear our objectives: to find a simplicial category in which the general cup product formula holds.  In this section, specifically in Definition \ref{good for doing it}, we provide a set of axioms which \begin{itemize}\item naturally extend corresponding properties of smooth manifolds and \item together imply Theorem \ref{main theorem}.\end{itemize}  This is an attempt to give the reader the ability to work with derived manifolds (at least at a surface level) without fully understanding their technical underpinnings.  

In the following section, Section \ref{main}, we will prove Theorem \ref{main theorem} from the axioms presented in Definition \ref{good for doing it}.  Then in Section \ref{layout} we will give an outline of the internal structure of a simplicial category which satisfies the axioms in Definition \ref{good for doing it}.  Finally, in the remaining sections we will fulfill the outline given in Section \ref{layout}, proving our main result in Section \ref{gfdi}.

\begin{definition}\label{good for doing it}

A simplicial category $\dM$ is called {\em good for doing intersection theory on manifolds} if it satisfies the following axioms: \begin{enumerate} 

\item\label{Axiom geo}{\bf Geometric. } The simplicial category $\dM$ is geometric in the sense of Definition \ref{def geo}.  That is, roughly, $\dM$ has fibrant mapping spaces, contains the category $\Man$ of smooth manifolds, has reasonable limits, and has underlying Hausdorff spaces.

\item\label{Axiom os} {\bf Open subobjects.} \begin{quotation} \begin{definition}\label{open subobject} Suppose that $\mcX\in\dM$ is an object with underlying space $X=\U(\mcX)$ and that $j\taking V\inj X$ is a open inclusion of topological spaces.  We say that a map $k\taking\mcV\to\mcX$ in $\dM$ is an {\em open subobject over $j$} if it is Cartesian over $j$; i.e. \begin{itemize} \item $\U(\mcV)=V,$ \item $\U(k)=j$, and \item If $k'\taking \mcV'\to\mcX$ is a map with $\U(\mcV')=V'$ and $\U(k')=j'$, such that $j'$ factors through $j$, then $k'$ factors homotopy-uniquely through $k$; that is, the space of dotted arrows making the diagram $$\xymatrix{\mcV'\ar@{|->}[d]_\U\ar@/^1pc/[rr]^{k'}\ar@{.>}[r]&\mcV\ar@{|->}[d]_\U\ar[r]_{k}&\mcX\ar@{|->}[d]_\U\\ V'\ar@/_1pc/[rr]_{j'}\ar[r]&V\ar[r]^j&X,}$$commute is contractible.\end{itemize} \end{definition}\end{quotation} For any $\mcX\in\dM$ and any open inclusion $j$ as above, there exists an open subobject $k$ over $j$.  Moreover, if a map $f\taking Z\to X$ of topological spaces underlies a map $g\taking\mcZ\to\mcX$ in $\dM$, then for any open neighborhood $U$ of $f(Z)$, the map $g$ factors through the open subobject $\mcU$ over $U$.

\item \label{Axiom u}{\bf Unions. } \begin{enumerate}\item Suppose that $\mcX$ and $\mcY$ are objects of $\dM$ with underlying spaces $X$ and $Y$ and that $i\taking\mcU\to\mcX$ and $j\taking\mcV\to\mcY$ are open subobjects with underlying spaces $U$ and $V$.  If $a\taking\mcU\to\mcV$ is an equivalence, and if the union of $X$ and $Y$ along $U\iso V$ is Hausdorff (so $X\cup Y$ exists in $\CG$) then the union $\mcX\cup\mcY$ (i.e. the colimit of $j\circ a$ and $i$) exists in $\dM$, and one has as expected $\U(\mcX\cup\mcY)=X\cup Y$.  \item If $f\taking\mcX\to\mcZ$ and $g\taking\mcY\to\mcZ$ are morphisms whose restrictions to $\mcU$ agree, then there is a morphism $\mcX\cup\mcY\to\mcZ$ which restricts to $f$ and $g$ respectively on $\mcX$ and $\mcY$.\end{enumerate}

\item\label{Axiom fl} {\bf Finite limits. } Given objects $\mcX,\mcY\in\dM$, a smooth manifold $M$, and maps $f\taking\mcX\to M$ and $g\taking\mcY\to M$, there exists an object $\mcZ\in\dM$ and a homotopy pullback diagram \begin{align}\label{dia:arb pullback}\hPull{\mcZ}{\mcY}{\mcX}{M.}{}{i}{g}{f}\end{align} We denote $\mcZ$ by $\mcX\cross_M\mcY$. If $\mcY=\RR^0$, $M=\RR^k$, and $g\taking\RR^0\to\RR^k$ is the origin, we denote $\mcZ$ by $\mcX_{f=0}$, and we call $i$ the canonical inclusion of the zeroset of $f$ into $\mcX$.

\item\label{Axiom lm} {\bf Local models. } We say that an object $\mcU\in\dM$ is a {\em local model} if there exists a smooth function $f\taking\RR^n\to\RR^k$ such that $\mcU\iso\RR^n_{f=0}$.  The {\em virtual dimension} of $\mcU$ is $n-k$.  For any object $\mcX\in\dM$, the underlying space $X=\U(\mcX)$ can be covered by open subsets in such a way that the corresponding open subobjects of $\mcX$ are all local models.  More generally, any open cover of $\U(\mcX)$ by open sets can be refined to an open cover whose corresponding open subobjects are local models.

\item\label{Axiom i} {\bf Local extensions for imbeddings.} \begin{quotation}\begin{definition}\label{def of imbed} For any map $f\taking\mcY\to\RR^n$ in $\dM$, the canonical inclusion of the zeroset $\mcY_{f=0}\to\mcY$ is called a {\em model imbedding}.  A map $g\taking\mcX\to\mcY$ is called an {\em imbedding} if there is a cover of $\mcY$ by open subobjects $\mcY_i$ such that, if we set $\mcX_i=g^\m1(\mcY_i)$, the induced maps $g|_{\mcX_i}\taking\mcX_i\to\mcY_i$ are model imbeddings.  Such open subobjects $\mcY_i\ss\mcY$ are called {\em trivializing neighborhoods} of the imbedding. \end{definition}\end{quotation}

Let $g\taking\mcX\to\mcY$ be an imbedding and let $h\taking\mcX\to\RR$ be a map in $\dM$.  Then there exists a dotted arrow such that the diagram $$\xymatrix{\mcX\ar[r]^h\ar[d]_g&\RR\\\mcY\ar@{-->}[ur]&}$$ commutes up to homotopy.  

\item\label{Axiom snb} {\bf Normal bundle. } Let $M$ be a smooth manifold and $\mcX\in\dM$ with underlying space $X=\U(\mcX)$.   If $g\taking\mcX\to M$ is an imbedding, then there exists an open neighborhood $U\ss M$ of $\mcX$, a real vector bundle $E\to U$, and a section $s\taking U\to E$ such that $$\hPull{\mcX}{U}{U}{E,}{g}{g}{z}{s}$$ is a homotopy pullback diagram, where $z\taking U\to E$ is the zero section of the vector bundle.  Let $g=\U(g)$ also denote the underlying map $X\to U$; then the pullback bundle $g^*(E)$ on $X$ is unique up to isomorphism.  We call $g^*(E)$ the {\em normal bundle of $X$ in $M$} and $s$ a {\em defining section}.

\end{enumerate}

Objects in $\dM$ will be called {\em derived manifolds (of type $\dM$)} and morphisms in $\dM$ will be called {\em morphisms of derived manifolds (of type $\dM$)}.  

\end{definition}

\begin{remark}

We defined the virtual dimension of a local model $\mcU=\RR^n_{f=0}$ in Axiom \ref{Axiom lm}.  We often drop the word ``virtual" and refer to the virtual dimension of $\mcU$ simply as {\em the dimension} of $\mcU$.

We will eventually define the virtual dimension of an arbitrary derived manifold as the Euler characteristic of its cotangent complex (Definition \ref{def:dimension}).  For now, the reader only needs to know that if $\mcZ,\mcX,\mcY$, and $M$ are as in Diagram (\ref{dia:arb pullback}) and these objects have constant dimension $z,x,y$, and $m$ respectively, then $z+m=x+y$, as expected.

\end{remark}

Let us briefly explain the definition of imbedding (Definition \ref{def of imbed}) given in Axiom \ref{Axiom i}.  If we add the word ``transverse" in the appropriate places, we are left with the usual definition of imbedding for smooth manifolds.  This is proven in Proposition \ref{justification of imbed def}.

Recall that a map of manifolds is called a (smooth) imbedding if the induced map on the total spaces of their respective tangent bundles is an injection.  Say that a map of manifolds $X\to U$ is {\em the inclusion of a level manifold} if there exists a smooth function $f\taking U\to\RR^n$, transverse to $0\taking\RR^0\to\RR^n$, such that $X\iso U_{f=0}$ over $U$.

\begin{proposition}\label{justification of imbed def}

Let $X$ and $Y$ be smooth manifolds, and $g\taking X\to Y$ a smooth map.  Then $g$ is an imbedding if and only if there is a cover of $Y$ by open submanifolds $U^i$ such that, if we set $X_i=g^\m1(U^i)$, each of the induced maps $g|_{X_i}\taking X_i\to U^i$ is the inclusion of a level manifold.

\end{proposition}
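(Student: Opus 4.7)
The plan is to prove the two directions separately using the local normal form for smooth imbeddings, treating this as a local question that unwinds on either side. For each direction, the content will be a standard application of the implicit function theorem (or, equivalently, the rank theorem), packaged so that we cover all of $Y$ rather than just the image $g(X)$.

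For the reverse direction, suppose $Y$ admits a cover by open submanifolds $U^i$ such that $g|_{X_i}\colon X_i \to U^i$ realizes $X_i$ as $U^i_{f^i=0}$ for some $f^i\colon U^i \to \RR^{n_i}$ transverse to zero. Being an imbedding is a local condition on $Y$, so it suffices to check it on each $U^i$. But the zero-set of a transverse smooth map is, by the implicit function theorem, a smoothly imbedded submanifold of $U^i$, and $g|_{X_i}$ identifies $X_i$ with this submanifold; hence $g|_{X_i}$ is an imbedding, and therefore so is $g$. For the forward direction, I would pick an arbitrary point $y \in Y$ and split into two cases. If $y \notin g(X)$, choose a coordinate ball $U^i$ around $y$ small enough to miss $g(X)$ (possible since $g$ is an imbedding, so $g(X)$ is locally closed and in particular closed in some neighborhood of $y$); then $X_i = \emptyset$ and we can take $f^i\colon U^i \to \RR$ to be a nonzero constant, which is trivially transverse to $0$ and whose zero-set is empty. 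If $y = g(x) \in g(X)$, invoke the local normal form: there exist charts $\psi\colon V \to \RR^k$ around $x$ and $\varphi\colon U \to \RR^n$ around $y$ in which $g$ is the inclusion $(t_1,\ldots,t_k) \mapsto (t_1,\ldots,t_k,0,\ldots,0)$. Shrinking $U$ to a smaller neighborhood $U^i$ so that $g^{-1}(U^i) \subset V$ and $g$ identifies $g^{-1}(U^i)$ homeomorphically with $U^i \cap g(X)$ (which is where we use that $g$ is an imbedding and not merely an immersion), define $f^i \colon U^i \to \RR^{n-k}$ by projection onto the last $n-k$ coordinates; this is a submersion, in particular transverse to $0$, and $(f^i)^{-1}(0) = g(g^{-1}(U^i))$ as submanifolds of $U^i$. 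The collection of all such $U^i$ covers $Y$, finishing the proof.

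The main obstacle is the forward direction at points $y \in g(X)$ whose preimage is nonempty: one has to simultaneously shrink the target chart $U^i$ and the source chart $V$ so that $g^{-1}(U^i) \subset V$ \emph{and} $g$ is a diffeomorphism from $g^{-1}(U^i)$ onto $U^i \cap g(X)$. This shrinking is exactly where the imbedding hypothesis (as opposed to mere immersion) is used, and it is what rules out pathologies like a line with irrational slope on a torus, which is an injective immersion but not an imbedding and not locally a level manifold in the required sense. Aside from this point, every step is a direct application of standard differential-topological tools.
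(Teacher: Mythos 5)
Your proof is correct in spirit and follows essentially the same route as the paper: both arguments are local on $Y$ and rely on the standard local structure of smooth imbeddings. The paper's sketch uses the tubular neighborhood theorem (choosing $U^i$ trivializing the normal bundle and taking $f^i$ to be the fiber coordinates), while you use the rank theorem / local normal form together with the implicit function theorem in the reverse direction; these are interchangeable tools, and nothing essential hinges on the choice. You are in fact more explicit than the paper in one respect: the paper's sketch only produces $U^i$'s covering a tubular neighborhood of $g(X)$, and says nothing about the rest of $Y$, whereas you address points $y \notin g(X)$ directly.

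However, exactly there lies a genuine gap, shared (silently) with the paper but stated incorrectly in your write-up. You claim that for $y \notin g(X)$ one can find a neighborhood $U^i$ of $y$ missing $g(X)$, ``since $g$ is an imbedding, so $g(X)$ is locally closed and in particular closed in some neighborhood of $y$.'' Local closedness gives such neighborhoods only at points of $g(X)$; it says nothing about points of $\overline{g(X)} \setminus g(X)$. For instance, $g\colon (0,1) \to \RR^2$, $g(t) = (t,0)$, is an imbedding, $g(X)$ is locally closed, yet no neighborhood of $(0,0)$ misses $g(X)$, and no neighborhood $U^i$ of $(0,0)$ can have $g|_{X_i}\colon X_i \to U^i$ realizing a level manifold, because $g(X_i)$ is never closed in $U^i$. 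So as stated, the proposition requires $g(X)$ to be closed in $Y$ (or some proper/closed-imbedding hypothesis) for the forward direction to cover all of $Y$. The paper's sketch skirts the same issue by simply not discussing the complement of the tubular neighborhood. Your instinct to treat $y \notin g(X)$ as a separate case was right; the justification in that case should be that $g(X)$ is closed, not merely locally closed.
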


\begin{proof}[Sketch of Proof]

We may assume $X$ is connected.  If $g$ is a smooth imbedding of codimension $d$, let $U$ be a tubular neighborhood, take the $U^i\ss U$ to be open subsets that trivialize the normal bundle of $X$, and take the $f^i\taking U^i\to\RR^k$ to be identity on the fibers.  The zero sets of the $f^i$ are open subsets of $X$, namely $U^i_{f^i=0}\iso X_i$.  Since they are smooth of the correct codimension, the $f^i$ are transverse to zero.

For the converse, note that the property of being an imbedding is local on the target, so we may assume that $X$ is the preimage of the origin under a map $f\taking U\to\RR^k$ that is transverse to zero, where $U\ss Y$ is some open subset.  The induced map $X\to U$ is clearly injective on tangent bundles.

\end{proof}

We now present a refinement of Theorem \ref{main theorem}, which we will prove as Corollary \ref{cor: refined main theorem} in the following section.  Recall that $\i\taking\Man\to\dMan$ denotes the inclusion guaranteed by Axiom \ref{Axiom geo} of Definition \ref{good for doing it}.

\begin{theorem}\label{refined main theorem} 

If $\dM$ is good for doing intersection theory on manifolds, then $\dM$ has the general cup product formula in cobordism, in the sense of Definition \ref{general cup}.  Moreover, for any manifold $T$, the  functor $$\i_*\taking\Omega(T)\to\Omega^{der}(T)$$ is an isomorphism between the classical cobordism ring and the derived cobordism ring (over $T$).

\end{theorem}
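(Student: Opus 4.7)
The plan is to verify Conditions (\ref{Cond geo})--(\ref{Cond cpf}) of Definition \ref{general cup} while simultaneously establishing the stronger statement that $\i_*$ is an isomorphism. Condition (\ref{Cond geo}) is immediate from Axiom \ref{Axiom geo}. Condition (\ref{Cond int}) follows from Axiom \ref{Axiom fl} applied to the inclusions $A, B \to M$, yielding the derived intersection $A \cap B := A \cross_M B$. For Condition (\ref{Cond db}), I would define $\Omega^{der}(T)$ to consist of pairs $(\mcX, f\taking \mcX \to T)$ with $\mcX$ a compact derived manifold, modulo the evident derived cobordism relation (realized by a compact derived manifold with boundary mapping to $T \cross [0,1]$ and restricting appropriately on its two ends). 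Disjoint union and the derived fiber product over $T$, which exists by Axiom \ref{Axiom fl}, give the ring structure, and $\i_*$ is the obvious forgetful map from $\Omega(T)$.

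The geometric heart is the cup product formula (Condition (\ref{Cond cpf})). Given submanifolds $A, B \ss M$, Axiom \ref{Axiom snb} produces a tubular neighborhood $U \ss M$ of $A$, a vector bundle $E \to U$, and a defining section $s \taking U \to E$ whose homotopy zero locus is $A$. Pulling $E$ and $s$ back along $B \to U$ and applying Axiom \ref{Axiom fl} identifies the derived intersection $A \cap B$ with the homotopy zero locus of $s|_B$. Now perturb $s|_B$ in the classical space of smooth sections to a section $s'$ transverse to the zero section; the resulting smooth zero locus $A'$ represents the classical cup product $[A] \smile [B]$ by the standard Thom-isomorphism argument. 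The straight-line homotopy $s_t = (1-t)\,s|_B + t\,s'$ is a section of the pullback bundle on $B \cross [0,1]$, and its derived zero locus --- again a derived manifold by Axiom \ref{Axiom fl} --- is a compact derived manifold exhibiting a derived cobordism from $A \cap B$ to $A'$, giving Equation (\ref{eqn cpf}).

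The remaining task is the Pontrjagin--Thom identification $\Omega^{der}(T) \iso \Omega(T)$, which in particular supplies the injectivity required by Condition (\ref{Cond db}). Given a compact derived manifold $\mcX$ with a map to $T$, I would first imbed $\mcX$ into $\RR^N \cross T$ for some $N$: Axiom \ref{Axiom lm} provides local coordinates, Axioms \ref{Axiom i} and \ref{Axiom u} let one extend and glue them, and compactness keeps $N$ finite. Axiom \ref{Axiom snb} then realizes $\mcX$ as the zero locus of a section of a vector bundle on an open subset of $\RR^N \cross T$, and the usual Pontrjagin--Thom collapse converts this data into a based map $S^N \wedge T_+ \to MO(k) \wedge T_+$. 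Derived cobordisms give homotopies of such maps, producing a ring homomorphism $\Omega^{der}(T) \to \pi_N(MO \wedge T_+)$. Its inverse, on any representing map, smooths the map away from the basepoint and forms the smooth transverse preimage, which lies in $\Omega(T)$. Tracing through shows that the composite $\Omega(T) \To{\i_*} \Omega^{der}(T) \to \pi_N(MO \wedge T_+) \iso \Omega(T)$ is the identity, so $\i_*$ is an isomorphism.

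The main obstacle is the pair of perturbation and imbedding arguments, which require repackaging classical deformation-theoretic constructions inside $\dM$. Specifically, one must confirm that the zero locus over $B \cross [0,1]$ is genuinely a compact derived manifold with boundary whose end-restrictions recover $A \cap B$ and $A'$, and that a global imbedding $\mcX \to \RR^N \cross T$ can be built by gluing local data through Axiom \ref{Axiom u} with $N$ finite. These are the steps at which the axioms of Definition \ref{good for doing it} --- particularly Axioms \ref{Axiom fl}, \ref{Axiom snb}, and \ref{Axiom u} --- do their real work; the difficulty is bookkeeping rather than conceptual.
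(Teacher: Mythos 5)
Your verification of Conditions (\ref{Cond geo}) and (\ref{Cond int}) is fine, and your cup product argument via perturbation of the defining section is essentially the route the paper takes for Condition (\ref{Cond cpf}).  However, two points need attention.

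First, your proposed definition of the derived cobordism relation — a ``compact derived manifold with boundary mapping to $T\cross[0,1]$'' — does not make sense inside $\dM$.  Manifolds with boundary are emphatically \emph{not} derived manifolds in this framework (the local models are zero sets of smooth maps on Euclidean space, never half-space), and the paper takes pains to avoid them: Definition \ref{def:cobordism} is formulated via a derived manifold (without boundary) $\mcX$ together with a proper map $f\taking\mcX\to\RR$ satisfying the $i$-collaring condition of Definition \ref{def:collared}, and one reads off $\mcZ_i=\mcX_{f=i}$.  Axiom \ref{Axiom u} then handles the gluing needed for transitivity (Proposition \ref{prop:der cob is equiv}).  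You would need to rephrase your whole cobordism setup in these terms before anything downstream can go through.

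Second, your argument for the isomorphism $\i_*$ takes a genuinely different route from the paper's.  The paper proves $\Omega\iso\Omega^{der}$ directly (Theorem \ref{cobord iso der cobord}): after imbedding (Proposition \ref{imbedding} and Corollary \ref{imbedding 2}) and invoking Axiom \ref{Axiom snb}, it perturbs the defining section to a transverse one using Stong's regular-homotopy result, keeping it fixed away from a compact set, and the parametrized zero set is itself the derived cobordism.  You instead go through Thom spectra.  That is a legitimate strategy, but you compress the hard part: showing that the Pontrjagin--Thom map $\Omega^{der}(T)\to\pi_N(MO\wedge T_+)$ and the ``smoothing'' map back are \emph{mutually} inverse.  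The claim that the composite starting from $\Omega(T)$ is the identity gives $\i_*$ split injective, nothing more; to conclude isomorphism you need the other loop $\Omega^{der}(T)\to\pi_N(MO\wedge T_+)\to\Omega(T)\to\Omega^{der}(T)$ to be the identity as well, which amounts precisely to showing every compact derived manifold is derived cobordant to the smooth transverse model produced from its Thom class.  That surjectivity statement is where the perturbation of sections must actually be carried out — it is the content of the paper's proof, not bookkeeping — and you need to make it explicit (including the relative version, via Corollary \ref{imbedding 2}, to handle derived cobordisms and not just compact objects).
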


\begin{example}\label{main examples}

Let $\dM$ denote a simplicial category which is good for doing intersection theory.  By the unproven theorem (\ref{refined main theorem}), we have a cobordism theory $\Omega^{der}$ and a nice cup product formula for doing intersection theory.  We now give several examples which illustrate various types of intersections.  The last few examples are cautionary.

\begin{description}

\item[Transverse planes] Consider a $k$-plane $K$ and an $\ell$-plane $L$ inside of projective space $\PP^n$.  If $K$ and $L$ meet transversely, then they do so in a $(k+\ell-n)$-plane, which we denote $A$.  In $\dM$ there is an equivalence $A\iso K\cross_{\PP^n}L.$  Of course, this descends to an equality $[A]=[K]\smile[L]$ both in the cobordism ring $\Omega(\PP^n)$ and in the derived cobordism ring $\Omega^{der}(\PP^n)$.

\item[Non-transverse planes] Suppose now that $K$ and $L$ are as above but do not meet transversely.  Their topological intersection $A'$ will have the structure of a smooth manifold but of ``the wrong dimension"; i.e. $\dim(A)>k+\ell-n$.  Moreover, the formula $[A']=^?[K]\smile[L]$ will {\em not} hold in $\Omega(\PP^n)$.

However, the intersection of $K$ and $L$ as a derived manifold is different from $A'$; let us denote it $A$.  Although the underlying spaces $\U(A')=\U(A)$ will be the same, the virtual dimension of $A$ will be $k+\ell-n$ as expected.  Moreover, the formula $[A]=[K]\smile[L]$ holds in the derived cobordism ring $\Omega^{der}(\PP^n)$.   (The formula does not makes sense in $\Omega(\PP^n)$ because $A$ is not a smooth manifold.)

\item[Fiber products and zerosets] Let $M\to P\from N$ be a diagram of smooth manifolds with dimensions $m,p,$ and $n$.  The fiber product $\mcX$ of this diagram exists in $\dM$, and the (virtual) dimension of $\mcX$ is $m+n-p$. 

For example, if $f_1,\ldots f_k$ is a finite set of smooth functions $M\to\RR$ on a manifold $M$, then their zeroset is a derived manifold $\mcX$, even if the $f_1,\ldots,f_k$ are not transverse.  To see this, let $f=(f_1,\ldots,f_k)\taking M\to\RR^k$ and realize $\mcX$ as the fiber product in the diagram $$\hPull{\mcX}{\RR^0}{M}{\RR^k.}{}{}{0}{f}$$  The dimension of $\mcX$ is $m-k$, where $m$ is the dimension of $M$.

For example, let $T$ denote the 2-dimensional torus and let $f\taking T\to\RR$ denote a Morse function.  If $p\in\RR$ is a critical value of index 1, then the pullback $f^\m1(p)$ is a ``figure 8" (as a topological space).  It comes with the structure of a derived manifold of dimension 1.  It is derived cobordant both to a pair of disjoint circles and to a single circle; however, it is not isomorphic as a derived manifold to either of these because its underlying topological space is different.

\item[Euler classes] Let $M$ denote a compact smooth manifold and let $p\taking E\to M$ denote a smooth vector bundle; consider $M$ as a submanifold of $E$ by way of the zero section $z\taking M\to E$.  The Euler class $e(p)$ is often thought of as the cohomology class represented by the intersection of $M$ with itself inside $E$.  However, classically one must always be careful to perturb $M\ss E$ before taking this self intersection. In the theory of derived manifolds, it is not necessary to choose a perturbation.  The fiber product $M\cross_EM$ exists as a compact derived submanifold of $M$, and one has $$e(p)=[M\cross_EM].$$ 

\item[Vector bundles] More generally, let $M$ denote a smooth manifold and let $p\taking E\to M$ denote a smooth vector bundle and $z\taking M\to E$ the zero section.  Given an arbitrary section $s\taking M\to E$, the zero set $Z(s):=z(M)\cap s(M)$ of $s$ is a derived submanifold of $M$.  If $s$ is transverse to $z$ inside $E$, then $Z(s)$ is a submanifold of $M$, and its manifold structure coincides with its derived manifold structure.  Otherwise, $Z(s)$ is a derived manifold that is not equivalent to any smooth manifold.

Changing $s$ by a linear automorphism of $E$ (over $M$) does not change the homotopy type of the derived manifold $Z(s)$.  Arbitrary changes of section do change the homotopy type: if $s$ and $t$ are any two sections of $E$, then $Z(s)$ is not generally equivalent to $Z(t)$ as a derived manifold.  However these two derived manifolds will be {\em derived cobordant}.  The derived cobordism can be given by a straight-line homotopy in $E$.

\item[Failure of Nullstellensatz] Suppose that $X$ and $Y$ are varieties (resp. manifolds), and that $X\to Y$ is a closed imbedding.  Let $I(X)$ denote the ideal of functions on $Y$ which vanish on $X$; given an ideal $J$, let $Z(J)$ denote the zeroset of $J$ in $Y$.  A classical version of the Nullstellensatz states that if $k$ is an algebraically closed field, then $I$ induces a bijection between the Zariski closed subsets of affine space $\AA^n=\Spec k[x_1,\ldots,x_n]$ and the radical ideals of the ring $k[x_1,\ldots,x_n]$.  A corollary is that for any closed subset $X\ss Y$, one has $X=Z(I(X))$; i.e. $X$ is the zero-set of the ideal of functions which vanish on $X$.

This radically fails in the derived setting.  The simultaneous zero-set of $n$ functions $Y\to\RR$ always has codimension $n$ in $Y$.  For example, the $x$-axis in $\RR^2$ is the zero-set of a single function $y\taking \RR^2\to\RR$, as a derived manifold.  However, $y$ is not the only function that vanishes on the $x$-axis -- for example, so do the functions $2y, 3y, y^2,$ and $0.$  If we find the simultaneous zero set of all five functions, the resulting derived manifold has codimension 5 inside of $\RR^2$.  Its underlying topological space is indeed the $x$-axis, but its structure sheaf is very different from that of $\RR$.  

Thus, if we take a closed subvariety of $Y$ and quotient the coordinate ring of $Y$ by the infinitely many functions which vanish on it, the result will have infinite codimension.  The formula $X=Z(I(X))$ fails in the derived setting (i.e. both for derived manifolds and for derived schemes).

We note one upshot of this.  Given a closed submanifold $N\ss M$, one cannot identify $N$ with the zeroset of the ideal sheaf of functions on $M$ which vanish on $N$.  Given an arbitrary closed subset $X$ of a manifold, one may wish to find an appropriate derived manifold structure on $X$ -- this cannot be done in a canonical way as it can in classical algebraic geometry, unless $X$ is a local complete intersection (see ``Vector Bundles" example above).

\item[Unions not included] Note that the union of manifolds along closed subobjects does {\em not} generally come equipped with the structure of a derived manifold.  The reason we include this cautionary note is that, in the introduction, we spoke of the union $Y$ of $d$ hyperplanes in $\CC P^n$.  However, we were secretly regarding $Y$ as the zeroset of a single section of the bundle $\mcO(d)$ on $\CC P^n$, and not as a union.  We referred to it as a union only so as to aid the readers imagination of $Y$.  See the ``Vector bundles" example above for more information on $Y$.

\item[Twisted cubic] We include one more cautionary example.  Let $C$ denote the twisted cubic in $\PP^3$; i.e. the image of the map $[t^3,t^2u,tu^2,u^3]\taking \PP^1\to\PP^3$.  Scheme-theoretically, the curve $C$ cannot be defined as the zeroset of two homogeneous polynomials on $\PP^3$, but it can be defined as the zeroset of three homogeneous polynomials on $\PP^3$ (or more precisely 3 sections of the line bundle $\mcO(2)$ on $\PP^3$); namely $C$ is the zero-set of the polynomials $f_1=xz-y^2, f_2=yw-z^2, f_3=xw-yz$.  This might lead one to conclude that $C$ is $3-3=0$ dimensional as a derived manifold (see Axiom \ref{Axiom lm}).

It is true that the zeroset of $f_1,f_2,f_3$ is a zero-dimensional derived manifold, however this zeroset is probably not what one means by $C$.  Instead, one should think of $C$ as locally the zeroset of two functions.  That is, $C$ is the zeroset of a certain section of a rank 2 vector bundle on $\PP^3$.  As such, $C$ is a one-dimensional derived manifold.  

The reason for the discrepancy is this.  The scheme-theoretic intersection does not take into account the dependency relations among $f_1,f_2,f_3$.  While these three functions are globally independent, they are locally dependent everywhere.  That is, for every point $p\in C$, there is an open neighborhood on which two of these three polynomials generate the third (ideal-theoretically).   This is not an issue scheme-theoretically, but it is an issue in the derived setting.  

\end{description}

\end{example}

\section{Main results}\label{main}

In this section, we prove Theorem \ref{refined main theorem}, which says that any simplicial category that satisfies the axioms presented in the last section (Definition \ref{good for doing it}) has the general cup product formula (Definition \ref{general cup}).

Before we do so, we prove an imbedding theorem (Proposition \ref{imbedding}) for compact derived manifolds, which says that any compact derived manifold can be imbedded into a large enough Euclidean space.  The proof very closely mimics the corresponding proof for compact smooth manifolds, except we do not have to worry about the rank of Jacobians.  

Fix a category $\dM$ which is good for doing intersection theory on manifolds, in the sense of Definition \ref{good for doing it}.  In this section we refer to objects in $\dM$ as derived manifolds and to morphisms in $\dM$ as morphisms of derived manifolds.  When we speak of an ``Axiom," we are referring to the axioms of Definition \ref{good for doing it}

Before proving Proposition \ref{imbedding}, let us state a few lemmas.

\begin{lemma}\label{subobject pullback}

Let $f\taking\mcX\to\mcY$ and $g\taking\mcY'\to\mcY$ be morphisms of derived manifolds, such that $g$ is an open subobject.  Then there exists an open subobject $\mcX'\to\mcX$ and a homotopy pullback diagram $$\hPull{\mcX'}{\mcY'}{\mcX}{\mcY.}{}{}{g}{f}$$ 

\end{lemma}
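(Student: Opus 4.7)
The plan is to construct $\mcX'$ directly from the underlying topology and then transfer the pullback property via Axiom \ref{Axiom os}. Let $X=\U(\mcX)$, $Y=\U(\mcY)$, and $V=\U(\mcY')$; since $g$ is an open subobject, $\U(g)$ is an open inclusion $j\taking V\inj Y$. Form the open subset $V':=\U(f)^{\m1}(V)\ss X$, and invoke the first clause of Axiom \ref{Axiom os} to produce an open subobject $k\taking\mcX'\to\mcX$ over $V'\inj X$. The composite $f\circ k$ has underlying map $\U(f)|_{V'}\taking V'\to Y$, which factors through the open neighborhood $V$ of its image; hence the second clause of Axiom \ref{Axiom os} furnishes a homotopy-unique factorization $h\taking\mcX'\to\mcY'$ together with a canonical homotopy $g\circ h\simeq f\circ k$. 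This produces the candidate square.

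To verify that this square is a homotopy pullback, I would check that for every test object $\mcZ\in\dM$ the natural map
$$\Map(\mcZ,\mcX')\too\Map(\mcZ,\mcX)\cross^h_{\Map(\mcZ,\mcY)}\Map(\mcZ,\mcY')$$
is a weak equivalence of simplicial sets. The Cartesian content of Axiom \ref{Axiom os} identifies $\Map(\mcZ,\mcW')$, for any open subobject $\mcW'\to\mcW$ over $U\ss W$, with the subspace of $\Map(\mcZ,\mcW)$ consisting of maps whose underlying map to $W$ factors through $U$, and moreover says that fibers of the factorization are contractible. In particular, $\Map(\mcZ,\mcX')$ is the part of $\Map(\mcZ,\mcX)$ landing in $V'$, and likewise for $\mcY'\ss\mcY$. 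Given a point $(\alpha,\beta,H)$ of the homotopy pullback, the homotopy $H\taking f\alpha\simeq g\beta$ becomes an equality $\U(f)\circ\U(\alpha)=j\circ\U(\beta)$ in the discrete category $\CG$, which forces $\U(\alpha)$ to land in $V'=\U(f)^{\m1}(V)$. Axiom \ref{Axiom os} then lifts $\alpha$ homotopy-uniquely through $k$ to a map $\gamma\taking\mcZ\to\mcX'$; the composite $h\circ\gamma$ agrees with $\beta$ after postcomposing with $g$ (using $H$ and the canonical homotopy $g\circ h\simeq f\circ k$), so homotopy-uniqueness of factorizations through the open subobject $g$ identifies $h\circ\gamma$ with $\beta$.

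The main point of care is that, because $\CG$ is a discrete simplicial category, every homotopy in $\dM$ is sent by $\U$ to an equality of continuous maps, which is exactly what is needed to push images into $V'$ and thereby apply Axiom \ref{Axiom os}. The contractibility of the spaces of factorizations in that axiom then delivers a weak equivalence of mapping spaces, not merely a bijection on connected components, and $\mcX'$ inherits the universal property of the homotopy pullback. The only genuinely delicate bookkeeping is the simultaneous coherence of the two factorization witnesses (the lift through $k$ and the identification with $\beta$), but each is parametrized by a contractible space, so the combined coherence is automatic.
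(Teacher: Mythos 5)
Your construction — apply $\U$, take $V' = \U(f)^{-1}(\U(\mcY'))$, and let $\mcX'$ be the open subobject of $\mcX$ over $V'$ furnished by Axiom~\ref{Axiom os} — is exactly what the paper does in its two-sentence sketch, and your verification of the universal property (exploiting that $\U$ sends homotopies to equalities because $\CG$ is discrete, then using the contractible spaces of factorizations from the Cartesian clause of Axiom~\ref{Axiom os}) correctly fills in the detail the paper omits.
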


\begin{proof}[Sketch of proof]

Apply $\U$ and let $X'$ denote the preimage of $\U(\mcY')$ in $\U(\mcX)$.  The corresponding open subobject $\mcX'\to\mcX$, guaranteed by Axiom \ref{Axiom os}, satisfies the universal property of the homotopy fiber product. 

\end{proof}

We state one more lemma about imbeddings.

\begin{lemma}\label{imbedding lemma}

\begin{enumerate}\item The pullback of an imbedding is an imbedding.  \item If $M$ is a manifold, $\mcX$ is a derived manifold, and $f\taking\mcX\to M$ is a morphism, then the graph $\Gamma(f)\taking\mcX\to\mcX\cross M$ is an imbedding.\end{enumerate}

\end{lemma}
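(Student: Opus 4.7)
The plan is to handle the two assertions in turn, bootstrapping (2) from (1) via a pullback description of the graph.

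For (1), suppose $g\taking\mcX\to\mcY$ is an imbedding and $h\taking\mcZ\to\mcY$ is an arbitrary morphism. I would first handle the model case: if $g$ arises as the canonical inclusion $\mcY_{a=0}\to\mcY$ for some $a\taking\mcY\to\RR^n$, then by the pasting lemma for homotopy pullbacks we get a homotopy pullback square whose upper-left corner is simultaneously $\mcZ\cross_\mcY\mcY_{a=0}$ and $\mcZ_{(a\circ h)=0}$; hence the pulled-back map is again a model imbedding (using Axiom \ref{Axiom fl} to guarantee the fiber product exists). For the general case, choose trivializing neighborhoods $\{\mcY_i\}$ as in Definition \ref{def of imbed}, and pull them back along $h$ using Lemma \ref{subobject pullback} to obtain open subobjects $\mcZ_i\to\mcZ$ covering $\mcZ$. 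Pullback pasting then exhibits $\mcZ_i\cross_{\mcY_i}\mcX_i$ as the appropriate restriction of $\mcZ\cross_\mcY\mcX\to\mcZ$, and by the model case each such restriction is a model imbedding; this is exactly the data required to conclude the pulled-back map is an imbedding.

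For (2), the key observation is that the graph $\Gamma(f)\taking\mcX\to\mcX\cross M$ fits into a homotopy pullback
$$\hPull{\mcX}{\mcX\cross M}{M}{M\cross M.}{\Gamma(f)}{f}{f\cross\id_M}{\Delta_M}$$
(One checks this either directly against the universal property of the homotopy pullback, or by observing that both $\Gamma(f)$ and the pullback classify the same pair of projections together with a homotopy to $f$.) By (1), it therefore suffices to show the diagonal $\Delta_M\taking M\to M\cross M$ is an imbedding in $\dM$.

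To verify the latter, cover $M$ by coordinate charts $\phi_i\taking U_i\iso\RR^n$; then $\{U_i\cross U_i\}$ is an open cover of $M\cross M$ whose preimage under $\Delta_M$ is $\{U_i\}$. On each chart, define $d_i\taking U_i\cross U_i\to\RR^n$ by $d_i(u,v)=\phi_i(u)-\phi_i(v)$. This map is a submersion and transverse to $0\taking\RR^0\to\RR^n$, and the classical pullback square
$$\Pull{U_i}{\RR^0}{U_i\cross U_i}{\RR^n}{}{\Delta}{0}{d_i}$$
is a transverse fiber product in $\Man$. Axiom \ref{Axiom ml} (preservation of transverse intersections) then promotes this to a homotopy pullback in $\dM$, exhibiting $\Delta|_{U_i}\taking U_i\to U_i\cross U_i$ as a model imbedding with witnessing function $d_i$. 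So $\Delta_M$ is an imbedding with trivializing neighborhoods $U_i\cross U_i$, and we are done.

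The main obstacle is purely bookkeeping: verifying that pullback along $h$ interacts well with the data of an open cover and a choice of defining function on each piece. Once one has the pullback-pasting step for model imbeddings and the explicit local presentation of the diagonal via coordinate differences, both statements follow formally.
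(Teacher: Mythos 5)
Your proof is correct, but for part (2) you take a genuinely different route from the paper. The paper reduces to the affine case ($\mcX$ local model, $M=\RR^p$), invokes Axiom \ref{Axiom i} (local extensions for imbeddings) to extend $f$ along the canonical inclusion $\mcX\to\RR^n$ to a smooth map $f'\taking\RR^n\to\RR^p$, observes via Proposition \ref{justification of imbed def} that the graph $\Gamma(f')\taking\RR^n\to\RR^n\cross\RR^p$ is an imbedding, and then pulls back. You instead observe directly that $\Gamma(f)$ is the homotopy pullback of the diagonal $\Delta_M\taking M\to M\cross M$ along $f\cross\id_M$, verify explicitly (coordinate charts plus Axiom \ref{Axiom ml}) that the diagonal of a smooth manifold is an imbedding, and then apply part (1). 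Your version is arguably cleaner: it bypasses the local reduction and the use of Axiom \ref{Axiom i} entirely, and it isolates the reusable fact that $\Delta_M$ is an imbedding. The paper's version is shorter because it leans on Proposition \ref{justification of imbed def} applied to the classical fact that graphs of smooth maps are imbeddings, but it carries the (unnecessary) dependency on the extension axiom. Both are correct; yours trades a small amount of extra bookkeeping (the pullback identification of the graph) for a more formal, axiom-light argument.
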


\begin{proof}

The first result is obvious by Definition \ref{def of imbed}, Lemma \ref{subobject pullback}, and basic properties of pullbacks.  For the second result, we may assume that $\mcX$ is an affine derived manifold and $M=\RR^p$.  We have a homotopy pullback diagram $$\hpull{\mcX}{\RR^0}{\RR^n}{\RR^k}$$ and by Axiom \ref{Axiom i}, the map $f\taking\mcX\to\RR^p$ is homotopic to some composite $\mcX\to\RR^n\To{f'}\RR^p$.  

By Proposition \ref{justification of imbed def}, the graph $\Gamma(f')\taking\RR^n\to\RR^n\cross\RR^p$ is an imbedding, and since the diagram $$\hPull{\mcX}{\mcX\cross\RR^p}{\RR^n}{\RR^n\cross\RR^p}{\Gamma(f)}{}{}{\Gamma(f')}$$ is a homotopy pullback, it follows from the first result that the top map is an imbedding too, as desired.

\end{proof}

The next theorem says that any compact derived manifold can be imbedded into Euclidean space.  This result is proved for smooth manifolds in \cite[II.10.7]{Bre}, and we simply adapt that proof to the derived setting.

\begin{proposition}\label{imbedding}

Let $\dM$ be good for intersection theory, and let $\mcX\in\dM$ be an object whose underlying space $\U(\mcX)$ is compact.  Then $\mcX$ can be imbedded into some Euclidean space $\RR^N$.

\end{proposition}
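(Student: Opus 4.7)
The plan is to mimic the classical argument (Bredon II.10.7) adapted to the derived setting.  The axioms doing the heavy lifting are Axiom \ref{Axiom lm} (every derived manifold is locally a local model), Axiom \ref{Axiom u} (maps glue along open covers), together with the compactness hypothesis on $\U(\mcX)$.

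First I would cover $\U(\mcX)$ by open sets whose corresponding open subobjects $\mcU_1,\dots,\mcU_r$ are local models, so $\mcU_i\iso\RR^{n_i}_{f_i=0}$ for some smooth $f_i\taking\RR^{n_i}\to\RR^{k_i}$; this is possible by Axiom \ref{Axiom lm}, and compactness of $\U(\mcX)$ lets me extract a finite subcover.  Each $\mcU_i$ comes equipped with a canonical map $\phi_i\taking\mcU_i\to\RR^{n_i}$ (the left leg of the defining pullback square provided by Axiom \ref{Axiom fl}).  Next, choose classical bump functions $\rho_i\taking\U(\mcX)\to[0,1]$ subordinate to this cover, with compact supports contained in $\U(\mcU_i)$ and such that the open sets $\{\rho_i>0\}$ still cover $\U(\mcX)$; pulling back bump functions on $\RR^{n_i}$ along $\phi_i$ realizes each $\rho_i$ as a morphism $\mcU_i\to\RR$ in $\dM$.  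Then form the products $\rho_i\cdot\phi_i\taking\mcU_i\to\RR^{n_i}$, and glue each of $\rho_i$ and $\rho_i\cdot\phi_i$ with the zero map on the complementary open subobject of $\mcX$ (using Axiom \ref{Axiom u}(b)) to obtain globally defined morphisms $\tilde\rho_i\taking\mcX\to\RR$ and $\tilde\phi_i\taking\mcX\to\RR^{n_i}$.  Bundling these together yields
\[
F \;=\; (\tilde\rho_1,\tilde\phi_1,\dots,\tilde\rho_r,\tilde\phi_r)\taking\mcX\to\RR^N,\qquad N \;=\; r + \textstyle\sum_i n_i.
\]

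To see $F$ is an imbedding per Definition \ref{def of imbed}, cover $\RR^N$ by the open subobjects $W_i=\{y\in\RR^N : y_{(i,0)}>0\}$ for $i=1,\dots,r$ together with $W_0=\RR^N\setminus F(\U(\mcX))$, which is open because $F(\U(\mcX))$ is compact and $\RR^N$ is Hausdorff.  Over $W_0$ the preimage is empty, realized as the zeroset of a nonvanishing constant map, hence a model imbedding.  Over $W_i$ the coordinate $y_{(i,0)}$ is strictly positive, so I can divide to define a smooth $\Phi_i\taking W_i\to\RR^{n_i}$ by $\Phi_i(y)_j=y_{(i,j)}/y_{(i,0)}$ and set $g_i=f_i\circ\Phi_i\taking W_i\to\RR^{k_i}$; on the locus in $\mcU_i$ where $\tilde\rho_i>0$ one has $\Phi_i\circ F=\phi_i$, so $(W_i)_{g_i=0}$ should recover $F^\m1(W_i)$, exhibiting the restriction as a model imbedding.

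The main obstacle is verifying this last identification $F^\m1(W_i)\we(W_i)_{g_i=0}$ as an equivalence in $\dM$, not merely of underlying spaces.  This must be obtained by combining the universal property of the homotopy pullback from Axiom \ref{Axiom fl} with the preimage description of open subobjects from Lemma \ref{subobject pullback}, then producing mutually inverse comparison maps whose compositions become homotopic to the identity after restriction along the local model trivializations $\mcU_i\iso\RR^{n_i}_{f_i=0}$.  Everything else---the finite covering, the partition-of-unity setup, and the extension-by-zero---is a routine adaptation of the classical argument, with Axiom \ref{Axiom u} supplying the one genuinely new ingredient needed to perform gluing inside the simplicial category $\dM$.
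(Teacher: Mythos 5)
Your approach shares the same skeleton as the paper's: cover $\U(\mcX)$ by finitely many local models, use bump functions to cut off the canonical coordinates, extend by zero via Axiom~\ref{Axiom u}(b), and bundle into a single map to $\RR^N$. The extra wrinkle of including the bump functions themselves as coordinates and then ``dividing'' on $W_i=\{y_{(i,0)}>0\}$ to recover $\phi_i$ is a nice variant of the classical Whitney-style trick and is genuinely different from the paper, which rescales the $\phi_i$'s and tries to recover them by projections alone.

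However, the final verification contains a real gap. The function $g_i=f_i\circ\Phi_i\taking W_i\to\RR^{k_i}$ has only $k_i$ components, so the local model $(W_i)_{g_i=0}$ has virtual dimension $N-k_i$, whereas $F^{-1}(W_i)$ is an open subobject of $\mcU_i\iso\RR^{n_i}_{f_i=0}$ and so has virtual dimension $n_i-k_i$; since $N=r+\sum_j n_j>n_i$, these cannot agree. More concretely, by Axiom~\ref{Axiom us} the underlying space of $(W_i)_{g_i=0}$ is $\{y\in W_i : f_i(\Phi_i(y))=0\}$, which is unbounded in $\RR^N$ (it is invariant under simultaneously scaling the block $(y_{(i,0)},\dots,y_{(i,n_i)})$ by $\lambda>0$, and is translation-invariant in all the $y_{(j,\cdot)}$, $j\neq i$), while $\U(F^{-1}(W_i))$ is precompact since $\U(\mcX)$ is compact. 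So they are not homeomorphic, let alone equivalent. The identity $\Phi_i\circ F=\phi_i$ only shows that $F|_{F^{-1}(W_i)}$ \emph{factors through} $(W_i)_{g_i=0}$, which is far weaker than being the canonical inclusion of the zeroset.

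To repair this you must add constraints cutting down the remaining $N-n_i$ directions. Concretely, you need $(N-n_i)$ more defining functions: for each $j\neq i$ the values of $\tilde\rho_j$ and $\tilde\phi_j$ restricted to $F^{-1}(W_i)\ss\mcU_i$, together with $\tilde\rho_i$ itself, must be expressed as smooth functions of $\phi_i$. Since the canonical inclusion $\mcU_i\to\RR^{n_i}$ is an imbedding, Axiom~\ref{Axiom i} supplies such lifts $\widehat\rho_j,\widehat\phi_j\taking\RR^{n_i}\to\RR$ (resp.\ $\RR^{n_j}$); then the defining function on $W_i$ should include the components $y_{(j,0)}-\widehat\rho_j(\Phi_i(y))$, $y_{(j,\cdot)}-\widehat\phi_j(\Phi_i(y))$, and $y_{(i,0)}-\widehat\rho_i(\Phi_i(y))$, in addition to $g_i$. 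This brings the codimension to $N-n_i+k_i$ as required. (The paper instead appends the coordinate projections $b^j$ to $f^i$ to get its defining function $c^i$, achieving the same codimension count; both arguments still owe the reader the all-Cartesian verification that the restricted $F$ is the canonical inclusion of the resulting zeroset, which is where the universal property of the pullback in Axiom~\ref{Axiom fl} and the $\mcU_i\iso\RR^{n_i}_{f_i=0}$ trivialization must actually be used, not just invoked.)
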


\begin{proof}

By Axiom \ref{Axiom lm}, we can cover $\mcX$ by local models $\mcV_i$; let $V_i=\U(\mcV_i)$ denote the underlying space.  For each $\mcV_i$, there is a homotopy pullback diagram $$\hPull{\mcV_i}{\RR^0}{\RR^{m_i}}{\RR^{n_i}}{}{x^i}{0}{f^i}$$ in $\dM$.  

For each $i$, let $\beta^i\taking\RR^{m_i}\to\RR$ be a smooth function that is 1 on some open disk $D^{m_i}$ (centered at the origin), $0$ outside of some bigger open disk, and nonnegative 
everywhere.  Define $z^i=\beta^i\circ x^i\taking\mcV_i\to\RR.$  Then $z^i$ is identically 1 on an open subset $V_i'\ss V_i$ (the preimage of $D^{m_i}$) and identically $0$ 
outside some closed neighborhood of $V'_i$ in $V_i$.  Let $\mcV_i'$ denote the open subobject of $\mcV_i$ over $V_i'\ss V_i$.  Define functions $y^i\taking \mcV_i\to\RR^{m_i}$ by multiplication: $y^i=z^ix^i.$

By construction, we have $$y^i|_{\mcV'_i}=x^i|_{\mcV'_i},$$ and each $y^i$ is constantly equal to $0$ outside of a closed neighborhood of 
$V'_i$ in $V_i$.  We can thus extend the $z^i$ and the $y^i$ to all of $\mcX$ by making them zero outside of $V_i$ (Axiom \ref{Axiom u}(b)).  By Axiom \ref{Axiom lm} and the compactness of $X=\U(\mcX)$, we can choose a finite number of indices $i$ so that the $\mcV_i'$ cover all of $\mcX$, say for indices $1\leq i\leq k$.  For each $i$, we have an all-Cartesian diagram \begin{eqnarray}\label{eqn:imbed}\xymatrix{\mcV_i'\ulhlimit\ar[r]\ar[d]_{y^i}&\mcV_i\ulhlimit\ar[r]\ar[d]_{x^i}&\RR^0\ar[d]^0\\ D^{m_i}\ar[r]&\RR^{m_i}\ar[r]_{f^i}&\RR^{n_i},}\end{eqnarray} by Lemma \ref{subobject pullback}.

Let $N=\sum_{i=1}^k m_i$.   Let $b^1\taking\RR^N\to\RR^{m_1}$ denote the first $m_1$ coordinate projections, $b^2\taking\RR^N\to\RR^{m_2}$ the next $m_2$ coordinate projections, and so on for each $1\leq i\leq k$.    The sequence $W=(y^1,\ldots,y^k)$ gives a map $$W\taking\mcX\to\RR^N,$$ such that for each $1\leq i\leq k$ one has $b^i\circ W=y^i$.  

We will show that $W$ is an imbedding in the sense of Definition \ref{def of imbed}; i.e. that the restriction of $W$ to each $\mcV_i'$ comes as the inclusion of the zeroset of smooth functions $c^i$ on a certain open subset of $\RR^N$.  The work has already been done; we just need to tease out what we already have.  The $c^i$ should act like $f^i$ on the relevant coordinates for $\mcV_i'$ and should act like coordinate projections everywhere else.  With that in mind, we define for each $1\leq i\leq k$, the function 
$$c^i=(b^1,\ldots,b^{i-1},f^i,b^{i+1},\ldots,b^k)\taking\RR^N\to\RR^{N-m_i+n_i}.$$  

We construct the following diagram 
$$\xymatrix{\mcV_i'\ulhlimit\ar[r]\ar[d]_{y^i}&\mcV_i\ar[d]_{x^i}\ar[r]\ulhlimit&\RR^0\ar[d]^0\\ 
D^{m_i}\ulhlimit\ar[d]\ar[r]&\RR^{m_i}\ar[d]\ar[r]^{f^i}\ulhlimit&\RR^{n_i}\ar[d]\\ D^{m_i}\cross\RR^{N-m_i}\ar[r]&\RR^N\ar[r]^-{c^i}&\RR^{(N-m_i)+n_i}}$$  The lower right-hand vertical map is a coordinate imbedding.  The lower right square and the lower left square are pullbacks in the category of manifolds, so they are homotopy pullback in $\dM$ by Axiom \ref{Axiom geo}.  The upper squares are pullbacks (see equation \ref{eqn:imbed}).  Therefore the diagram is (homotopy) all-Cartesian.  The vertical composite $\mcV_i'\to D^{m_i}\cross\RR^{N-m_i}$ is the restriction of $W$ to $\mcV_i'$, and it is also the zero set of the horizontal composite $D^{m_i}\cross\RR^{N-m_i}\to\RR^{(N-m_i)+n_i}$.  Since $D^{m_i}\cross\RR^{N-m_i}\to\RR^N$ is the inclusion of an open subset, we have shown that $W$ is an imbedding in the sense of Definition \ref{def of imbed}.

\end{proof}

The following relative version of Proposition \ref{imbedding} is proven in almost exactly the same way.  Recall that a map $f$ of topological spaces is said to be {\em proper} if the inverse image under $f$ of any compact subspace is compact.  A morphism of derived manifolds is said to be proper if the underlying morphism of topological spaces is proper.

\begin{corollary}\label{imbedding 2}

Let $\dM$ be good for intersection theory, let $\mcX\in\dM$ be a derived manifold, and let $M\in\Man$ be a manifold.  Suppose that $f\taking\mcX\to M$ is proper, $A\ss M$ is a compact subset and $A\ss f(\U(\mcX)).$  Let $A'$ denote the interior of $A$, and let $\mcX'=f^\m1(A')$.  There exists an imbedding $W\taking\mcX'\to\RR^N\cross A'$ such that the diagram $$\xymatrix{\mcX'\ar[r]\ar[d]_W&\mcX\ar[dd]^-f\\\RR^N\cross A'\ar[d]_-{\pi_2}\\A'\ar[r]&M}$$ commutes, and the composite $\pi_2\circ W\taking\mcX'\to A'$ is proper.

\end{corollary}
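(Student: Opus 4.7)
The plan is to adapt the proof of Proposition~\ref{imbedding}, replacing the global compactness of $\U(\mcX)$ with properness of $f$ over the compact set $A$, and producing the $A'$-coordinate of $W$ directly from the map $f$ itself.

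Since $f$ is proper and $A$ is compact, the subspace $K := f^{-1}(A) \subseteq \U(\mcX)$ is compact and contains $\U(\mcX') = f^{-1}(A')$. By Axiom~\ref{Axiom lm}, cover $K$ by local models $\mcV_i \subseteq \mcX$, and by compactness extract a finite subcollection whose bump-subobjects $\mcV_i'$ (for $1 \le i \le k$) still cover $K$. Following the construction in Proposition~\ref{imbedding} verbatim, build cutoff functions $z^i$, modified coordinates $y^i = z^i x^i$ extended by zero to all of $\mcX$, and the aggregate map $W_0 = (y^1, \ldots, y^k) \taking \mcX \to \RR^N$ where $N = \sum_i m_i$. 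The diagrammatic argument of Proposition~\ref{imbedding} shows that $W_0$ restricts to an imbedding on the open subobject $\bigcup_i \mcV_i'$; then by Lemmas~\ref{subobject pullback} and \ref{imbedding lemma}(1), the further restriction $W_0|_{\mcX'} \taking \mcX' \to \RR^N$ is again an imbedding, since $\mcX'$ is an open subobject of $\bigcup_i \mcV_i'$. Now set
\[
W := (W_0|_{\mcX'},\, f|_{\mcX'}) \taking \mcX' \longrightarrow \RR^N \times A'.
\]

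To see that $W$ is an imbedding, note that $A'$ is an open submanifold of $M$ and factor $W$ as
\[
\mcX' \xrightarrow{\Gamma(f|_{\mcX'})} \mcX' \times A' \xrightarrow{W_0|_{\mcX'} \times \id_{A'}} \RR^N \times A'.
\]
By Lemma~\ref{imbedding lemma}(2) the graph $\Gamma(f|_{\mcX'})$ is an imbedding. The second map is an imbedding because, given a trivializing cover $\{U_\alpha\}$ of $\RR^N$ for $W_0|_{\mcX'}$, the cover $\{U_\alpha \times A'\}$ trivializes $W_0|_{\mcX'} \times \id_{A'}$: each model-imbedding structure on $U_\alpha$ pulls back along the projection $U_\alpha \times A' \to U_\alpha$ to one on $U_\alpha \times A'$. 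Commutativity of the required diagram is immediate from $\pi_2 \circ W = f|_{\mcX'}$, and properness of $\pi_2 \circ W$ reduces to properness of $f$: for any compact $K' \subseteq A' \subseteq M$, the preimage $f^{-1}(K') \subseteq \U(\mcX')$ is compact by properness of $f$ and equals $(\pi_2 \circ W)^{-1}(K')$.

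The main point of care lies in the imbedding verification---specifically, confirming that the product of an imbedding with the identity on a manifold remains an imbedding, and that imbeddings restrict along open subobjects. Both reduce to chasing the all-Cartesian diagram of Proposition~\ref{imbedding} with an extra $A'$-factor appended, so no new ideas are required beyond the axioms of Definition~\ref{good for doing it} and the preceding lemmas.
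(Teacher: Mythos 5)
Your overall plan is the same as the paper's: build $W_0$ by the bump-function argument of Proposition \ref{imbedding} over the compact set $f^{-1}(A)$, pair it with $f$, and check properness. The gap is the assertion that $W_0|_{\mcX'}\taking\mcX'\to\RR^N$ is an imbedding ``since $\mcX'$ is an open subobject of $\bigcup_i\mcV_i'$.'' Lemmas \ref{subobject pullback} and \ref{imbedding lemma}(1) let you pull an imbedding back along an open subobject of the \emph{target}; they do not say that restricting the \emph{source} of an imbedding to an open subobject preserves the imbedding property, and in fact it does not. An imbedding in the sense of Definition \ref{def of imbed} has closed image (on each $\mcY_\alpha$ of the trivializing cover, $g^{-1}(\mcY_\alpha)$ is a zeroset, hence closed in $\mcY_\alpha$), but the image of an open subobject of the source need not be closed: take $\bigcup_i\mcV_i'=\RR$, $W_0=\id_\RR$, and $\mcX'=(0,1)$; the restriction $(0,1)\inj\RR$ is not an imbedding. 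Since $\mcX'=f^{-1}(A')$ is a preimage under $f$ rather than under $W_0$, there is in general no open $U\ss\RR^N$ with $W_0^{-1}(U)\cap\bigcup_i\mcV_i'=\mcX'$, so this cannot be repaired by shrinking the target. The subsequent step, which needs a trivializing cover for $W_0|_{\mcX'}$ in order to exhibit $W_0|_{\mcX'}\cross\id_{A'}$ as an imbedding, is therefore unavailable.

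The correct order of operations --- and what the paper's terse appeal to Lemma \ref{imbedding lemma} is really doing --- is to restrict the target \emph{before} restricting the source. Form $\wt W:=(W_0,f)\taking\bigcup_i\mcV_i'\to\RR^N\cross M$, factored as $(W_0\cross\id_M)\circ\Gamma(f|_{\cup_i\mcV_i'})$: the graph is an imbedding by Lemma \ref{imbedding lemma}(2), and $W_0\cross\id_M$ is the pullback of the imbedding $W_0|_{\cup_i\mcV_i'}$ along $\pi_1\taking\RR^N\cross M\to\RR^N$, hence an imbedding by Lemma \ref{imbedding lemma}(1). Now $\wt W^{-1}(\RR^N\cross A')=f^{-1}(A')\cap\bigcup_i\mcV_i'=\mcX'$, so by Lemma \ref{subobject pullback} the induced map $W\taking\mcX'\to\RR^N\cross A'$ is the pullback of $\wt W$ along the open inclusion $\RR^N\cross A'\inj\RR^N\cross M$, and is therefore an imbedding by Lemma \ref{imbedding lemma}(1). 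The $A'$-coordinate supplied by $f$ is exactly what makes the image closed; your properness observation at the end is the topological reflection of this. The commutativity and properness parts of your proof are fine as written.
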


\begin{proof}[Sketch of proof]

Copy the proof of Proposition \ref{imbedding} verbatim until it requires the compactness of $X=\U(\mcX)$, which does not hold in our case.  Instead, use the compactness of $f^\m1(A)$ and choose finitely many indices $1\leq i\leq k$ such that the union $\cup_i\mcV_i'$ contains $f^\m1(A)$.   Continue to copy the proof verbatim, except replace instances of $\mcX$ with $\cup_i\mcV_i'$.  In this way, we prove that one has an imbedding of the derived manifold $$W'\taking\cup_i\mcV_i'\to\RR^N.$$  Let $f'\taking\mcX'\to A'$ be the pullback of $f$, and note that $\mcX'=f^\m1(A')$ is contained in $\cup_i\mcV_i$.  The restriction $W'|_{\mcX'}\taking\mcX'\to\RR^N$ and the map $f'$ together induce a map $W\taking\mcX'\to\RR^N\cross A'$, which is an imbedding by Lemma \ref{imbedding lemma}.  By construction, $f'=\pi_2\circ W$ is the pullback of $f$, so it is proper.

\end{proof}

\subsection{Derived cobordism}

Fix a simplicial category $\dM$ that is good for doing intersection theory on manifolds in the sense of Definition \ref{good for doing it}.  One generally defines cobordism using the idea of manifolds with boundary.  Under the above definitions, manifolds with boundary are {\em not} derived manifolds.

One can define derived manifolds with boundary in a way that emulates Definition \ref{good for doing it}, i.e. one could give axioms for a simplicial category with local models that look like generalizations of manifolds with boundary, etc.  One could then prove that the local definition was equivalent to a global one, i.e. one could prove that derived manifolds with boundary can be imbedded into Euclidean half space.  All of this was the approach of the author's dissertation, \cite{Spi}.  For the current presentation, in the interest of space, we choose to dispense with all that and define cobordism using the idea of proper maps to $\RR$.

To orient the reader to Definition \ref{def:cobordism}, we reformulate the usual cobordism relation for manifolds using this approach.  Two compact smooth manifolds $Z_0$ and $Z_1$ are cobordant if there exists a manifold (without boundary) $X$ and a proper map $f\taking X\to\RR$ such that \begin{itemize}\item for the points $i=0,1\in\RR$, the map $f$ is $i$-collared in the sense of Definition \ref{def:collared}, and \item for $i=0,1$, there is an isomorphism $Z_i\iso f^\m1(i)$, also written $Z_i\iso X_{f=i}$.\end{itemize}  This is precisely the definition we give for derived cobordism, except that we allow $Z_0,Z_1,$ and $X$ to be a derived manifolds.

\begin{definition}\label{def:collared}

Let $\mcX$ denote a derived manifold and $f\taking\mcX\to\RR$ a morphism.  Given a point $i\in\RR$, we say that $f$ is {\em $i$-collared} if there exists an $\epsilon>0$ and an equivalence \begin{eqnarray}\label{eqn collar}\mcX_{|f-i|<\epsilon}\we\mcX_{f=i}\cross(-\epsilon,\epsilon),\end{eqnarray} where $\mcX_{|f-i|<\epsilon}$ denotes the open subobject of $\mcX$ over $(i-\epsilon,i+\epsilon)\ss\RR$ (see Lemma \ref{subobject pullback}). 

\end{definition}

\begin{definition}\label{def:cobordism}

Let $\dM$ be good for intersection theory.  Compact derived manifolds $\mcZ_0$ and $\mcZ_1$ are said to be {\em derived cobordant} if there exists a derived manifold $\mcX$ and a proper map $f\taking\mcX\to\RR$, such that for $i=0$ and for $i=1$, the map $f$ is $i$-collared and there is an equivalence $\mcZ_i\we\mcX_{f=i}$.  The map $f\taking\mcX\to\RR$ is called {\em a derived cobordism between $\mcZ_0$ and $\mcZ_1$}.  We refer to $\mcZ_0\amalg\mcZ_1$ as the {\em boundary} of the cobordism.

If $T$ is a CW complex, then a {\em derived cobordism over $T$} is a pair $(a,f)$, where $f\taking\mcX\to\RR$ is a derived cobordism and $a\taking\U\mcX\to T$ is a continuous map to $T$.

If $T$ is a manifold, we denote the ring whose elements are derived cobordism classes over $T$ (with sum given by disjoint union and product given by fiber product over $T$) by $\Omega^{der}(T)$.  We use $\Omega^{der}$ to denote $\Omega^{der}(\RR^0)$.  

\end{definition}

\begin{remark}

In Definition \ref{def:cobordism} we speak of derived cobordism classes, even though we have not yet shown that derived cobordism is an equivalence relation on compact derived manifolds.  We will prove this fact in Proposition \ref{prop:der cob is equiv}.

\end{remark}

\begin{remark}

We did not define oriented derived manifolds, so all of our cobordism rings $\Omega(T)$ and derived cobordism rings $\Omega^{der}(T)$ should be taken to be unoriented.  

However, the oriented case is no harder, we must simply define oriented derived manifolds and oriented derived cobordisms.  To do so, imbed a compact derived manifold $\mcX$ into some $\RR^n$ (which is possible by Proposition \ref{imbedding}), and consider the normal bundle guaranteed by Axiom \ref{Axiom snb}.  We define an orientation on $\mcX$ to be an orientation on some such normal bundle.  One orients derived cobordisms similarly.  

All results which we prove in the unoriented case also work in the oriented case.

\end{remark}

\begin{remark}\label{cobordism helper}

One can show that if $U$ is an open neighborhood of the closed interval $[0,1]\ss\RR$ then any proper map $f\taking\mcX\to U$ can be extended to a proper map $g\taking\mcX\to\RR$ with an isomorphism $f^\m1([0,1])\iso g^\m1([0,1])$ over $[0,1]$.  So, one can consider such an $f$ to be a derived cobordism between $f^\m1(0)$ and $f^\m1(1)$.

\end{remark}

\begin{proposition}\label{prop:der cob is equiv}

Let $\dM$ be good for intersection theory.  Derived cobordism is an equivalence relation on compact objects in $\dM$.

\end{proposition}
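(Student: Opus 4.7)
The plan is to verify the three conditions of an equivalence relation in turn, with reflexivity and symmetry being essentially formal. For reflexivity, given a compact $\mcZ \in \dM$, form the product $\mcZ \times \RR$ as a homotopy pullback over $\RR^0$ via Axiom \ref{Axiom fl}, and consider the second projection $\pi\taking \mcZ \times \RR \to \RR$. Since $\mcZ$ is compact, $\pi$ is proper; for each $i \in \RR$ the open subobject $(\mcZ \times \RR)_{|\pi - i| < \epsilon}$ is $\mcZ \times (i-\epsilon, i+\epsilon) \simeq \mcZ \times (-\epsilon, \epsilon)$, which is equivalent to the fiber $(\mcZ \times \RR)_{\pi = i} \simeq \mcZ$ crossed with $(-\epsilon, \epsilon)$, so $\pi$ is $i$-collared for all $i$, and in particular a cobordism from $\mcZ$ to $\mcZ$. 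For symmetry, if $f\taking \mcW \to \RR$ is a cobordism from $\mcZ_0$ to $\mcZ_1$, precompose the reflection $t \mapsto 1-t$ to obtain $(1-f)\taking \mcW \to \RR$: this remains proper (composition with a diffeomorphism of $\RR$), its $0$-collar comes from the $1$-collar of $f$ and vice versa, and the fibers over $0$ and $1$ are swapped.

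\textbf{Transitivity via gluing.} The real work is transitivity. Given derived cobordisms $f\taking \mcW \to \RR$ from $\mcZ_0$ to $\mcZ_1$ and $g\taking \mcV \to \RR$ from $\mcZ_1$ to $\mcZ_2$, the plan is to glue $\mcW$ and $\mcV$ along their collar neighborhoods of $\mcZ_1$ using Axiom \ref{Axiom u}. First, shift by replacing $g$ with $g' := g+1$, so the $\mcZ_1$-collar of $\mcV$ now sits at $g' = 1$. Choose a common $\epsilon > 0$ with collar equivalences $\mcW_{|f-1| < \epsilon} \simeq \mcZ_1 \times (-\epsilon, \epsilon)$ (with $f$ corresponding to $1+t$) and $\mcV_{|g'-1| < \epsilon} \simeq \mcZ_1 \times (-\epsilon, \epsilon)$ (with $g'$ corresponding to $1+t$). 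Form the open subobjects $\mcW^+ := \mcW_{f < 1 + \epsilon/2} \inj \mcW$ and $\mcV^- := \mcV_{g' > 1 - \epsilon/2} \inj \mcV$ (using Axiom \ref{Axiom os}); inside each, the open subobjects $\mcW_{1 - \epsilon/2 < f < 1 + \epsilon/2}$ and $\mcV_{1 - \epsilon/2 < g' < 1 + \epsilon/2}$ are both identified by the collar equivalences with $\mcZ_1 \times (-\epsilon/2, \epsilon/2)$, and hence are equivalent to each other. Applying Axiom \ref{Axiom u}(a) yields a union $\mcX := \mcW^+ \cup \mcV^-$ in $\dM$, and Axiom \ref{Axiom u}(b) applied to the maps $f|_{\mcW^+}$ and $g'|_{\mcV^-}$ (which agree on the overlap by construction) produces a single map $F\taking \mcX \to \RR$. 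Then $F$ is proper because the preimage of any compact set decomposes as the union of its preimages in $\mcW^+$ and $\mcV^-$, each compact by properness of $f$ and $g$; it is $0$-collared because the $0$-collar of $f$ survives inside $\mcW^+$; and it is $2$-collared because the $2$-collar of $g'$ (i.e.\ the $1$-collar of $g$) survives inside $\mcV^-$. Composing with the diffeomorphism $t \mapsto t/2$ of $\RR$ rescales to a cobordism from $\mcZ_0 \simeq F^{-1}(0)$ to $\mcZ_2 \simeq F^{-1}(2)$ in the sense of Definition \ref{def:cobordism}; alternatively, one may invoke Remark \ref{cobordism helper} directly.

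\textbf{Main obstacle.} The delicate point is verifying the hypothesis of Axiom \ref{Axiom u}(a) that the underlying topological union $\U(\mcW^+) \cup_{\U(\mcZ_1) \times (-\epsilon/2, \epsilon/2)} \U(\mcV^-)$ is Hausdorff; naive gluings of the form $\RR \cup_{(0,\infty)} \RR$ fail this, producing the line with double origin. The choice of truncation parameters above is designed precisely to avoid this: outside the collar, the $\mcW^+$ side lies in $\{f \leq 1 - \epsilon/2\}$ while the $\mcV^-$ side lies in $\{g' \geq 1 + \epsilon/2\}$, so any two unidentified points are separated by the $F$-preimages of disjoint open subsets of $\RR$, with the collar providing the common neighborhood for pairs straddling the overlap. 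After this topological check, the verification that $F$ has the required collars, properness, and fibers is a straightforward consequence of the local-to-global properties packaged into Axioms \ref{Axiom os}, \ref{Axiom u}, and \ref{Axiom fl}.
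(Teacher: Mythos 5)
Your proof is correct and follows essentially the same route as the paper: reflexivity via $\mcZ\times\RR\to\RR$, symmetry by reflecting $\RR$, and transitivity by truncating the two cobordisms to half-open pieces containing the $\mcZ_1$-collar and gluing them along that collar via Axiom~\ref{Axiom u} (the paper uses $\mcX_{f<1+\epsilon}$ and $\mcY_{g>-\epsilon}$ glued along $\mcZ_1\times(-\epsilon,\epsilon)$; your choice of truncation parameters is a cosmetic variant). Your explicit Hausdorffness check is a useful addition that the paper leaves implicit.

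One small slip in the properness argument: the preimage of a compact $K\subset\RR$ under $F$ does \emph{not} decompose into compact pieces by intersecting with the open subobjects $\mcW^+$ and $\mcV^-$, since restricting a proper map to an open subset destroys properness (e.g.\ $f^{-1}(K)\cap\{f<1+\epsilon/2\}$ is generally not compact). Instead decompose along the closed sets $\{F\le 1\}$ and $\{F\ge 1\}$: the first is $f^{-1}(K\cap(-\infty,1])$, compact because $f$ is proper on all of $\mcW$ and this set lies entirely inside $\mcW^+$; likewise for the second. The union of these two compacta is $F^{-1}(K)$.
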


\begin{proof}

Derived cobordism is clearly symmetric.  

To see that it is reflexive, let $\mcX$ be a compact derived manifold and consider the projection map $\mcX\cross\RR\to\RR$. It is a derived cobordism between $\mcX$ and $\mcX$.

Finally, we must show that derived cobordism is transitive.  Suppose that $f\taking\mcX\to\RR$ is a derived cobordism between $\mcZ_0$ and $\mcZ_1$ and that $g\taking\mcY\to\RR$ is a derived cobordism between $\mcZ_1$ and $\mcZ_2$.   By Axiom \ref{Axiom u}, we can glue open subobjects $\mcX_{f<1+\epsilon}\ss\mcX$ and $\mcY_{g>-\epsilon}\ss\mcY$ together along the common open subset $\mcZ_1\cross(-\epsilon,\epsilon)$ to obtain a derived manifold $\mcW$ together with a proper map $h\taking\mcW\to\RR$ such that for $i=0,2$, we have $\mcW_{h=i}=\mcZ_i$.  The result follows.

\end{proof}

\begin{lemma}

Let $\dM$ be good for intersection theory.  The functor $\i\taking\Man\to\dM$ induces a homomorphism on cobordism rings $$\i_*\taking\Omega\to\Omega^{der}.$$

\end{lemma}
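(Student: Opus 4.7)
The plan is to verify the three properties needed for $\i_*$ to be a ring homomorphism: well-definedness on cobordism classes, additivity under disjoint union, and multiplicativity under Cartesian product. Throughout, the key tools are the full faithfulness of $\i$ (Axiom \ref{Axiom geo}) and the fact that $\i$ preserves transverse pullbacks into manifolds and the terminal object (Axiom \ref{Axiom ml}), so that standard smooth constructions translate faithfully into $\dM$.

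For well-definedness, starting from a classical cobordism between compact manifolds $Z_0$ and $Z_1$, i.e.\ a compact manifold-with-boundary $W$ with $\partial W\iso Z_0\amalg Z_1$, I first convert it into a model of the form required by Definition \ref{def:cobordism}. The standard external-collar construction in smooth topology produces an open smooth manifold $X$ together with a proper smooth map $f\taking X\to(-\epsilon,1+\epsilon)$ whose fiber over $i\in\{0,1\}$ is $Z_i$ and which, on a neighborhood of each such $i$, factors as the projection $Z_i\cross(i-\epsilon,i+\epsilon)\to(i-\epsilon,i+\epsilon)$. By Remark \ref{cobordism helper}, $f$ extends to a proper smooth map $f'\taking X'\to\RR$ with the same behavior over $[0,1]$. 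Applying $\i$ yields $\i(f')\taking\i(X')\to\RR$: it is proper by Axiom \ref{Axiom us}; its fiber over $i\in\{0,1\}$ is equivalent to $\i(Z_i)$ by Axiom \ref{Axiom ml} (since $i$ is a regular value of $f'$, so the inclusion $\{i\}\to\RR$ is transverse to $f'$); and the classical product-collar diffeomorphism, being an isomorphism in $\Man$, is sent by the fully faithful $\i$ to an equivalence $\i(X')_{|f'-i|<\epsilon}\we\i(Z_i)\cross(-\epsilon,\epsilon)$. Therefore $\i(f')$ is a derived cobordism between $\i(Z_0)$ and $\i(Z_1)$, so $\i_*$ is well-defined on equivalence classes.

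For the ring operations: the disjoint union $A\amalg B$ of manifolds is realized in $\dM$ by Axiom \ref{Axiom u} as the union of $\i(A)$ and $\i(B)$ along their common empty open subobject, producing an object with underlying space $A\amalg B$; using the universal property in Axiom \ref{Axiom u}(b) together with the full faithfulness of $\i$, this coproduct is identified with $\i(A\amalg B)$. The Cartesian product $A\cross B$ in $\Man$ is the transverse fiber product $A\cross_{\RR^0}B$, so Axiom \ref{Axiom ml} gives $\i(A\cross B)\we\i(A)\cross_{\i(\RR^0)}\i(B)\we\i(A)\cross\i(B)$, where we used that $\i(\RR^0)$ is terminal in $\dM$. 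Since disjoint unions and fiber products of derived cobordisms are again derived cobordisms, these identities descend to the ring-homomorphism equations, with the unit $[\i(\RR^0)]\in\Omega^{der}$ realized as the image of $[\RR^0]\in\Omega$.

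The main technical obstacle is the first step: producing a proper-map model of a classical cobordism and verifying that its collaredness is preserved by $\i$. The collar construction itself is standard smooth topology, and the preservation of the collar equivalence reduces to the elementary observation that $\i$, being fully faithful, sends diffeomorphisms of smooth manifolds to equivalences in $\dM$.
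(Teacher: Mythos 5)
Your proposal is correct and follows essentially the same route as the paper: construct a collared proper model $\wt{X}\to(-\epsilon,1+\epsilon)$ of a classical cobordism (the paper does this via an imbedding into $\RR^n$ with $Z_i=X_{x_n=i}$, you via the equivalent external-collar construction), invoke Remark \ref{cobordism helper} to extend to $\RR$, and then check that $\i$ turns this into a derived cobordism using the axioms. The paper compresses the verification to the single line ``it suffices to show that cobordant manifolds are derived cobordant,'' implicitly leaving to the reader the (easy) compatibility of $\i$ with the ring operations; you spell out that compatibility via Axiom \ref{Axiom u} for disjoint unions and Axiom \ref{Axiom ml} for products, and you also make explicit the uses of Axiom \ref{Axiom us} for properness and Axiom \ref{Axiom ml} for the fiber and collar identifications, all of which are correct and fill in details that the paper's sketch elides.
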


\begin{proof}

It suffices to show that manifolds which are cobordant are derived cobordant.  If $Z_0$ and $Z_1$ are manifolds which are cobordant, then there exists a compact manifold $X$ with boundary $Z_0\amalg Z_1$.  It is well-known that one can imbed $X$ into $\RR^n$ in such a way that for $i=0,1$ we have $Z_i\iso X_{x_n=i}$, where $x_n$ denotes the last coordinate of $\RR^n$.  Because each $Z_i$ has a collar neighborhood, we can assume that $X_{|x_n-i|<\epsilon}\iso X_{x_n=i}\cross(-\epsilon,\epsilon)$ for some $\epsilon>0$.  The preimage $\wt{X}\ss X$ of $(-\epsilon,1+\epsilon)$ is a manifold (without boundary), hence a derived manifold (under $\i$).  By Remark \ref{cobordism helper}, $x_n\taking \wt{X}\to\RR$ is a derived cobordism between $Z_0$ and $Z_1$.

\end{proof}

\begin{theorem}\label{cobord iso der cobord}

Let $\dM$ be good for intersection theory.  The functor $$\i_*\taking\Omega\to\Omega^{der}$$ is an isomorphism.

\end{theorem}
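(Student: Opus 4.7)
The plan is to prove that $\i_*$ is both surjective and injective by a Pontrjagin--Thom-style argument. In each direction the key step is to use Proposition \ref{imbedding} (or its relative version Corollary \ref{imbedding 2}) together with Axiom \ref{Axiom snb} to present the derived manifold in question as the derived zero set of a smooth section of a smooth vector bundle, and then perturb the section so that it becomes transverse to the zero section. By Axiom \ref{Axiom ml}, a transverse derived zero set coincides with the corresponding classical smooth zero set, producing an honest smooth manifold; a linear interpolation between the original section and the perturbed section, interpreted as a section over $U \cross \RR$ and combined with Axiom \ref{Axiom fl}, then gives the derived cobordism (for surjectivity) or the classical cobordism (for injectivity) that we need.

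For surjectivity, let $\mcX \in \dM$ be compact. Use Proposition \ref{imbedding} to imbed $\mcX \hookrightarrow \RR^N$ and Axiom \ref{Axiom snb} to obtain an open $U \ss \RR^N$, a vector bundle $E \to U$, and a defining section $s \taking U \to E$ whose derived zero set is $\mcX$. Choose a compact $K \ss U$ whose interior contains $\U(\mcX)$ and, by classical transversality, perturb $s$ to a smooth section $s' \taking U \to E$ that is transverse to the zero section and equals $s$ outside $K$; then $M := Z(s')$ is a compact smooth manifold. Pick a smooth $\phi \taking \RR \to [0,1]$ with $\phi \equiv 0$ on $(-\infty, 1/4]$ and $\phi \equiv 1$ on $[3/4, \infty)$, and set $\sigma(u, t) = (1 - \phi(t)) s(u) + \phi(t) s'(u)$, a section of $\pi^* E \to U \cross \RR$. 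Let $\mcW = (U \cross \RR)_{\sigma = 0}$, which exists by Axiom \ref{Axiom fl}. Since $s$ and $s'$ agree and are nonvanishing outside $K$, one has $\U(\mcW) \ss K \cross \RR$, so the projection $\mcW \to \RR$ is proper; the choice of $\phi$ makes it collared at $0$ and $1$, with fibers $\mcX$ and $M$ respectively. This is the required derived cobordism between $\mcX$ and a smooth manifold.

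For injectivity, suppose $f \taking \mcX \to \RR$ is a derived cobordism between compact smooth manifolds $M_0$ and $M_1$. Restrict to $\mcX' := f^{-1}((-\epsilon, 1 + \epsilon))$ and apply Corollary \ref{imbedding 2} to obtain an imbedding $\mcX' \hookrightarrow \RR^N \cross (-\epsilon, 1 + \epsilon)$ whose second projection is proper and equals $f$. Apply Axiom \ref{Axiom snb} to obtain a normal bundle $E \to U$ and a defining section $s$; by choosing the imbedding to respect the collar structures $\mcX_{|f-i|<\epsilon} \we M_i \cross (-\epsilon, \epsilon)$ at $i = 0, 1$ (using Whitney imbeddings of the smooth $M_i$ and splicing via Axiom \ref{Axiom u}) one can arrange that $s$ is already transverse to the zero section on open neighborhoods of the fibers over $0$ and $1$. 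A relative transversality argument then perturbs $s$ to a globally transverse section $s'$ that agrees with $s$ on smaller such neighborhoods and outside a compact subset of $U$. The smooth zero set $Z(s') \to \RR$ is then proper, collared at $0$ and $1$, and has fibers $M_0$ and $M_1$, so it is a classical cobordism between them. The main obstacle is this simultaneous management of properness, collaring, and transversality near the boundary fibers: each perturbation must be compactly supported, unchanged in a neighborhood of the collared fibers, and globally transverse. These requirements meet standard relative transversality theorems from differential topology, but the compatibility with the collarings must be set up by a careful initial choice of imbedding and normal bundle.
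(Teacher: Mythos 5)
Your proposal is correct and takes essentially the same route as the paper: imbed via Proposition \ref{imbedding}/Corollary \ref{imbedding 2}, realize the derived manifold as the derived zero set of a defining section $s$ from Axiom \ref{Axiom snb}, perturb $s$ to a transverse section with control away from a compact set (resp. near the collared fibers), and read the straight-line homotopy as a derived cobordism. The one difference is cosmetic: where you construct the interpolating section $\sigma$ and cutoff $\phi$ by hand and worry about the simultaneous management of properness, collaring, and transversality in the injectivity step, the paper simply invokes the relative transversality theorem of Stong (\cite[App.\ 2, p.\ 24]{Sto}), which supplies a regular homotopy from $s$ to a transverse section fixing $s$ on the prescribed closed set; this handles precisely the ``main obstacle'' you flag, so the gap you worry about is closed by citation rather than by a new idea. (Implicitly one also uses that on the collar regions $\mcX$ is smooth, hence, by Proposition \ref{transverse intersections}, $s$ is already transverse there, which licenses the relative perturbation.)
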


\begin{proof}

We first show that $\i_*$ is surjective; i.e. that every compact derived manifold $\mcZ$ is derived cobordant to a smooth manifold.  Let $W\taking\mcZ\to\RR^N$ denote a closed imbedding, which exists by Proposition \ref{imbedding}.  Let $U\ss \RR^N$, $E\to U$, and $s,z\taking U\to E$ be the open neighborhood, vector bundle, zero section, and defining section from Axiom \ref{Axiom snb}, so that the diagram $$\hPull{\mcZ}{U}{U}{E,}{g}{g}{z}{s}$$ is a homotopy pullback.  Note that $U$ and $E$ are smooth manifolds, and that the image $z(U)\ss E$ is a closed subset.

Since $\mcZ$ is compact, we can choose a compact subset $U'\ss U$ whose interior contains $\mcZ$.  Let $A\ss U$ be the compliment of the interior of $U'$.  Then $s(A)\cap z(U)=\emptyset$, so in particular they are transverse as closed submanifolds of $E$.  

By \cite[App 2, p. 24]{Sto}, there exists a regular homotopy $H\taking [0,1]\cross U\to E$ such that \begin{itemize}\item $H_0=s\taking U\to E$,\item $H_1=t$, where $t\taking U\to E$ is transverse to the closed subset $z(U)\ss E$, and \item for all $0\leq i\leq 1$, we have $H_i|_A=s|_A$.
\end{itemize}  For any $\epsilon>0$ we can extend $H$ to a homotopy $H\taking (-\epsilon,1+\epsilon)\cross U\to E$, with the same three bulleted properties, by making it constant on $(-\epsilon,0]$ and $[1,1+\epsilon)$.  

Consider the all-Cartesian diagram $$\xymatrix{\mcZ_i\ulhlimit\ar[r]\ar[d]&\mcP\ulhlimit\ar[r]\ar[d]_a&U\ar[d]^z\\ \RR^0\cross U\ulhlimit\ar[r]^-{i\cross\id_U}\ar[d]_\pi&(-\epsilon,1+\epsilon)\cross U\ar[r]^-H\ar[d]^\pi&E\\ \RR^0\ar[r]_i&\RR&}$$ for $i=0,1$.  Notice that $\mcZ_0=\mcZ$ and that $\mcZ_1$ is a smooth manifold.  It suffices to show that the map $\pi\circ a\taking\mcP\to\RR$ is proper, i.e. that for each $0\leq i\leq 1$, the intersection of $H_i(U)$ and $z(U)$ is compact.  Since $H_i(A)\cap z(U)=\emptyset$, one has $$H_i(U)\cap z(U)=H_i(U')\cap z(U),$$ and the right hand side is compact.  Thus $H$ is a derived cobordism between $\mcZ$ and a smooth manifold.

To prove that $\i_*$ is injective, we must show that if smooth manifolds $M_0,M_1$ are derived cobordant, then they are smoothly cobordant.  Let $f\taking\mcX\to\RR$ denote a derived cobordism between $M_0$ and $M_1$.  Let $A'$ denote the open interval $(-1,2)\ss\RR$, and let $\mcX'=f^\m1(A').$  By Corollary \ref{imbedding 2} we can find an imbedding $W\taking\mcX'\to\RR^N\cross A'$ such that $$f':=f|_{\mcX'}=\pi_2\circ W$$ is proper.  Note that for $i=0,1$ we still have $\mcX'_{f'=i}=M_i$.  By Remark \ref{cobordism helper}, $f'$ induces a derived cobordism between $M_0$ and $M_1$ with the added benefit of factoring through an imbedding $\mcX'\to\RR^N$ into Euclidean space.

Again we use Axiom \ref{Axiom snb} to find a vector bundle and section $s$ on an open subset of $\RR^N\cross\RR$ whose zeroset is $\mcX'$.  We apply \cite[App 2, p.24]{Sto} to find a regular homotopy between $s$ and a section $t$ which is transverse to zero, all the while keeping the collar closed submanifold $M_0\cross (-\epsilon,\epsilon)\amalg M_1\cross(1-\epsilon,1+\epsilon)$ fixed.  The zeroset of $t$ is a smooth cobordism between $M_0$ and $M_1$.

\end{proof}

\begin{corollary}\label{cor: refined main theorem}

If $\dM$ is good for doing intersection theory on manifolds, then $\dM$ has the general cup product formula in cobordism, in the sense of Definitions \ref{def geo} and \ref{general cup}.  Moreover, for any manifold $T$, the functor $$\i_*\taking\Omega(T)\to\Omega^{der}(T)$$ is an isomorphism between the classical cobordism ring and the derived cobordism ring (over $T$).

\end{corollary}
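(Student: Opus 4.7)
The plan is to verify the four conditions of Definition \ref{general cup} and then upgrade Theorem \ref{cobord iso der cobord} from the absolute setting ($T = \RR^0$) to an arbitrary manifold base $T$. Conditions (\ref{Cond geo}) and (\ref{Cond int}) are immediate: the former is part of being good for intersection theory (Axiom \ref{Axiom geo}), and the latter is Axiom \ref{Axiom fl}, which provides the homotopy pullback $A\cap B:=A\cross_M B$ in $\dM$. Condition (\ref{Cond db}) requires derived cobordism to be an equivalence relation on compact objects, which is Proposition \ref{prop:der cob is equiv}, together with injectivity of $\i_*\taking\Omega(T)\to\Omega^{der}(T)$, which will follow from the stronger relative isomorphism statement.

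Condition (\ref{Cond cpf}), the cup product formula, is essentially tautological in our setup. The product in $\Omega^{der}(M)$ is by definition the fiber product over $M$ (Definition \ref{def:cobordism}), and the derived intersection is by definition $A\cap B := A\cross_M B$, so
\[
[A]\smile[B] \;=\; [A\cross_M B] \;=\; [A\cap B],
\]
with no further argument needed.

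The substantive task is the relative isomorphism $\i_*\taking\Omega(T)\to\Omega^{der}(T)$ for an arbitrary manifold $T$. I would repeat the proof of Theorem \ref{cobord iso der cobord} essentially verbatim, additionally carrying along a reference map to $T$ at each step. For surjectivity, given a compact derived manifold $\mcZ$ with continuous map $a\taking\U\mcZ\to T$, imbed $\mcZ\inj\RR^N$ by Proposition \ref{imbedding} and apply Axiom \ref{Axiom snb} to obtain an open $U\ss\RR^N$, a vector bundle $E\to U$, and a defining section $s$. Since $T$ is a manifold and hence an absolute neighborhood retract, the map $a$ extends to a continuous $\tilde a\taking U\to T$, after shrinking $U$ if necessary. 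The regular-homotopy deformation of $s$ to a transverse section $t$ then yields a derived cobordism $\mcP\to\RR$ whose underlying space lies in $(-\epsilon,1+\epsilon)\cross U$, and composing with $\tilde a$ on the second factor supplies the reference map to $T$, producing a derived cobordism over $T$ between $\mcZ$ and a smooth manifold. For injectivity, apply Corollary \ref{imbedding 2} as in the original proof to imbed $\mcX'=f^\m1((-1,2))$ into $\RR^N\cross(-1,2)$, then extend the reference map to $T$ via the ANR property and carry it through the regular-homotopy argument while keeping the collared boundaries $M_0$ and $M_1$ fixed.

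The main technical obstacle is the bookkeeping to ensure that the reference map to $T$ survives each deformation and extension. This is routine precisely because $T$ is an ANR, which supplies the neighborhood extensions of $a$; once those extensions are in hand the original proof of Theorem \ref{cobord iso der cobord} goes through unchanged, and both condition (\ref{Cond db}) and the isomorphism claim in the corollary follow.
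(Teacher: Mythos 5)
Your verification of Conditions (1), (2), and the relative isomorphism statement all track the paper's argument closely — in particular, the extension-by-ANR trick you use to upgrade Theorem \ref{cobord iso der cobord} to the relative setting over $T$ is exactly what the paper does (it invokes that $T$ is a CW complex and $Z$ is metrizable, so continuous maps on closed subsets extend to open neighborhoods). However, your treatment of Condition (\ref{Cond cpf}) as ``essentially tautological'' misses the genuine geometric content, and it is there that the proof has a real gap.

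The formula $[A]\smile[B]=[A\cap B]$ is only meaningful once one knows that the ring $\Omega^{der}(M)$ is well-defined and that $\i_*$ is a ring homomorphism, and it is precisely \emph{those} facts that hide the work you are declaring vacuous. The product in $\Omega(M)$ is computed by perturbing $A$ to a submanifold $\tilde A$ transverse to $B$ and taking the manifold intersection $\tilde A\cap B$; the product you propose for $\Omega^{der}(M)$ is the homotopy pullback $A\cross_M B$ with $A$ left as is. For $\i_*$ to be a ring homomorphism --- which is part of Condition (\ref{Cond db}), and which you dispatch by citing only Proposition \ref{prop:der cob is equiv} and injectivity --- one must show that $\i(\tilde A)\cross_M\i(B)$ (which by Axiom \ref{Axiom ml} is the classical intersection) is derived cobordant over $M$ to the possibly very degenerate homotopy pullback $\i(A)\cross_M\i(B)$. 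This invariance of the derived intersection class under homotoping one of the maps is a nontrivial geometric statement, not a bookkeeping identity. The paper proves it by exhibiting an explicit derived cobordism: a homotopy $H\taking A\cross\RR\to M$ with $H_0$ the inclusion of $A$ and $H_1$ transverse to $B$, and then observing that the two pullbacks $\mcX_0 = A\cross_M B$ and $\mcX_1 = \tilde A\cap B$ sit at the ends of the derived cobordism produced by $H$. That perturbation argument is the mathematical heart of Condition (\ref{Cond cpf}), and it does not appear anywhere in your proposal. Relabeling it as a consequence of definitions does not make it disappear; it simply relocates the missing step from Condition (4) into the unverified ring-homomorphism clause of Condition (3).
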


\begin{proof}

Suppose $\dM$ is good for doing intersection theory on manifolds.  Since $T$ is assumed to be a CW complex, if $Z\ss X$ is a closed subset of a metrizable topological space, any map $f\taking Z\to T$ extends to a map $f'\taking U\to T$, where $U\ss X$ is an open neighborhood of $Z$.  One can now modify the proof of Theorem \ref{cobord iso der cobord} so that all constructions are suitably ``over $T$," which implies that $i_*$ is an isomorphism.  Let us quickly explain how to make this modification. 

We begin with a compact derived manifold $\mcZ=(Z,\mcO_Z)$ and a map of topological spaces $\sigma\taking Z\to T$.  Since $Z$ is Hausdorff and paracompact, it is metrizable.  It follows that the map $\sigma$ extends to a map $\sigma'\taking U\to T$, where $U\ss\RR^N$ is open neighborhood of $T$ and $\sigma'|_Z=\sigma$.  By intersecting if necessary, we can take this $U$ to be the open neighborhood given in the proof of Theorem \ref{cobord iso der cobord}.  The vector bundle $p$ is canonically defined over $T$ (via $\sigma'$), as are the sections $s,z$.  One can continue in this way and see that the proof extends without any additional work to the relative setting (over $T$).  

Now that we have proved the second assertion, that $i_*$ is an isomorphism, we go back and show the first, i.e. that $\dM$ has the general cup product formula in cobordism.  The first three conditions of Definition \ref{general cup} follow from Axiom \ref{Axiom geo}, Axiom \ref{Axiom fl}, and Theorem \ref{cobord iso der cobord}.  

To prove the last condition, let $M$ be a manifold and suppose that $j\taking A\to M$ and $k\taking B\to M$ are compact submanifolds.  There is a map $H\taking A\cross\RR\to M$ such that $H_0=j$ and such that $H_1\taking A\to M$ is transverse to $k$.  For $i=0,1$, let $\mcX_i$ denote the intersection of (the images of) $H_i$ and $k$.  Note that $\mcX_0=A\cross_MB$ is a compact derived manifold which we also denote $A\cap B$.  Further, $\mcX_1$ is a compact smooth manifold, often called the ``transverse intersection of $A$ and $B$" (because $A$ is ``made transverse to $B$"), and $\mcX_0$ and $\mcX_1$ are derived cobordant over $M$.  

It is well known that $[\mcX_1]=[A]\smile[B]$ as elements of $\Omega(M)$.  Since $\Omega(M)\iso\Omega^{der}(M)$, and since $[\mcX_0]=[\mcX_1]$ as elements of $\Omega^{der}(M)$, the formula $$[A]\smile [B]=[A\cap  B]$$ holds.

\end{proof}

\section{Layout for the construction of $\dMan$}\label{layout}

In the next few sections we will construct a simplicial category $\dMan$ which is good for doing intersection theory on manifolds, in the sense of Definition \ref{good for doing it}.  An object of $\dMan$ is called a {\em derived manifold} and a morphism in $\dMan$ is called a {\em morphism of derived manifolds}.

A derived manifold could be called a ``homotopical $C^\infty$-scheme of finite type."  In the coming sections we will build up to a precise definition.  In the current section we will give a brief outline of the construction.  

Let us first recall the process by which one defines a scheme in algebraic geometry.  One begins with the category of commutative rings, which can be defined as the category of algebras of a certain algebraic theory (see \cite[3.3.5.a]{Bor2}).  One then defines a ringed space to be a space $X$ together with a sheaf of rings $\mcO_X$ on $X$.  A local ringed space is a ringed space in which all stalks are local rings.  One must then functorially assign to each commutative ring $A$ a local ringed space $\Spec A$, called its prime spectrum.  Once that is done, a scheme is defined as a local ringed space which can be covered by open subobjects, each of which is isomorphic to the prime spectrum of a ring.  Note that the purpose of defining local ringed spaces is that morphisms of schemes have to be morphisms of local ringed spaces, not just morphisms of ringed spaces.

If one is interested only in schemes of finite type (over $\ZZ$), one does not need to define prime spectra for all rings.  One could get away with just defining the category of affine spaces $\AA^n=\Spec \ZZ[x_1,\ldots,x_n]$ as local ringed spaces.  One then uses as local models the fiber products of affine spaces, as taken in the category of local ringed spaces.

We recall quickly the notion of an algebraic theory \cite{Law-Alg}.  An algebraic theory is a category $\TT$ with objects $\{T^i\| i\in\NN\}$, such that $T^i$ is the $i$-fold product of $T^1$.  A $\TT$-algebra is a product preserving functor from $\TT$ to $\Sets$.  For example, the category of rings is the category of algebras on the algebraic theory with objects $\AA_\ZZ^n$ and morphisms given by polynomial maps between affine spaces.  A lax simplicial $\TT$-algebra is a functor from $\TT$ to $\sSets$ that descends to a product-preserving functor on the homotopy category.  

The basic outline of our construction of derived manifolds follows the above construction fairly closely.  The differences are that \begin{itemize}\item Rings are not sufficient as our basic objects.  We need a smooth version of the theory, whose algebras are called $C^\infty$-rings.
\item Everything must be done homotopically.  Our basic objects are in fact lax simplicial $C^\infty$-rings, which means that the defining functor is not required to be ``product preserving on the nose" but instead weakly product preserving. 
\item Furthermore, our sheaves are homotopy sheaves, which means that they satisfy a homotopical version of descent. 
\item Our affine spaces are quite familiar: they are simply the Euclidean spaces $\RR^n$ (as smooth manifolds).  A morphism of affine spaces is a smooth function $\RR^n\to\RR^m$.
\item We use as our local models the homotopy fiber products of affine spaces, as taken in the category of local $C^\infty$-ringed spaces.\end{itemize}

A derived manifold is hence a smooth, homotopical version of a scheme.  In section \ref{cinf} we define our basic objects, the lax $C^\infty$-rings, and prove some lemmas about their relationship with $C^\infty$-rings (in the usual sense) and with commutative rings.  In Section \ref{lrs}, we define local $C^\infty$-ringed spaces and derived manifolds (Definition \ref{def:dman}).  We then must discuss cotangent complexes for derived manifolds in Section \ref{cot}.  In Section \ref{proofs}, we give the proofs of several technical results.  Finally, in Section \ref{gfdi} we prove that our category of derived manifolds is good for doing intersection theory on manifolds, in the sense of Definition \ref{good for doing it}.  

\begin{convention}\label{convention injective}

In this paper we will rely heavily on the theory of model categories and their localizations.  See \cite{Hir} or \cite{Hov} for a good introduction to this subject.  

If $X$ is a category, there are two common model structures on the simplicial presheaf category $\sSets^X$, called the injective and the projective model structures.  In this work, {\bf we always use the injective model structure}.  If we speak of a model structure on $\sSets^X$ without specifying it, we mean the injective model structure.  The injective model structure on $\sSets^X$ is given by object-wise cofibrations, object-wise weak equivalences, and fibrations determined by the right-lifting property with respect to acyclic cofibrations.  

With the injective model structure, $\sSets^X$ is a left proper, combinatorial, simplicial model category.  As stated above, weak equivalences and cofibrations are determined object-wise; in particular every object in $\sSets^X$ is cofibrant.  See, for example, \cite{Barwick} or \cite{Lur-HTT} for further details.

\end{convention}

\section{$C^\infty$-rings}\label{cinf}

Let $\E$ denote the full subcategory of $\Man$ spanned by the Euclidean spaces $\RR^i$, for $i\in\NN$; we refer to $\E$ as {\em the Euclidean category}.  Lawvere, Dubuc, Moerdijk and Reyes, and others have studied $\E$ as an algebraic theory (see \cite{Law-Cat}, \cite{Dub}, \cite{MR}).  The $\E$-algebras, which are defined as the product preserving functors from $\E$ to sets, are called $C^\infty$-rings.  We use a homotopical version, in which we replace sets with simplicial sets, and strictly product-preserving functors with functors which preserve products up to weak equivalence.

Let $\sSets^\E$ denote the simplicial model category of functors from $\E$ to the category of simplicial sets.  As usual (see Convention \ref{convention injective}), we use the injective model structure on $\sSets^\E$, in which cofibrations and weak equivalences are determined object-wise, and fibrations are determined by the right-lifting property. 

For $i\in\NN$, let $H_i\in\sSets^\E$ denote the functor $H_i(\RR^n)=\Hom_\E(\RR^i,\RR^n).$   For each $i,j\in\NN$, let \begin{equation}\label{eqn pij} p_{i,j}\taking H_i\amalg H_j\to H_{i+j}\end{equation} denote the natural map induced by coordinate projections $\RR^{i+j}\to\RR^i$ and $\RR^{i+j}\to\RR^j$.  We also refer to $H_i$ as $H_{\RR^i}$ (see Example \ref{example cinf}).

Note that if $F\taking\EE\to\sSets$ is any functor then the Yoneda lemma gives a natural isomorphism of simplicial sets $$\Map(H_i,F)\iso F(\RR^i).$$

\begin{definition}\label{definition sC}

With notation as above, define the category of {\em lax simplicial $C^\infty$-rings}, denoted $\sC$, to be the localization of $\sSets^\E$ at the set $P=\{p_{i,j}|i,j\in\NN\}$ (see \cite[3.1.1]{Hir}).  

We often refer to the objects of $\sC$ simply as $C^\infty$-rings, dropping the words ``lax simplicial."  We can identify a discrete object in $\sC$ with a (strict) $C^\infty$-ring in the classical sense (see \cite{MR}), and thus we refer to these objects as {\em discrete $C^\infty$-rings.}  

\end{definition}

The model category $\sC$ is a left proper, cofibrantly-generated simplicial model category (\cite[3.4.4]{Hir}), in which all objects are cofibrant.  Let $F\in\sSets^\E$ be a fibrant object, considered as an object of $\sC$.  Then $F$ is fibrant in $\sC$ if and only if it is local with respect to all the $p_{i,j}$; i.e. for each $i,j\in\NN$ the natural map $$\Map(H_{i+j},F)\to\Map(H_i\amalg H_j,F)$$ is a weak equivalence.   Since each $H_i$ is the functor represented by $\RR^i$, this is equivalent to the condition that the natural map $$F(\RR^{i+j})\to F(\RR^i)\cross F(\RR^j)$$ be a weak equivalence. In other words, $F\taking\E\to\sSets$ is fibrant in $\sC$ if and only if it is weakly product preserving (and fibrant as an object of $\sSets^\E$).  

\begin{remark}

There is a model category of strict simplicial $C^\infty$-rings, consisting of (strictly) product preserving functors from $\E$ to $\sSets$.  It is Quillen equivalent to our $\sC$ (see \cite[15.3]{Lur-DAG1}).  The reason we use the lax version is that, as a localization of a left proper model category, it is clearly left proper.  (Note that, by \cite{rezk}, we could have used a simplicial version of the theory to obtain a proper model of strict algebras, but this model is more difficult to use for our purposes.)

This left properness of $\sC$ is necessary for the further localization we use to define the category of homotopy sheaves of $C^\infty$-rings.  The use of homotopy sheaves is one of the key ways in which our theory  differs from the synthetic differential geometry literature.

\end{remark}
 
\begin{proposition}\label{we on und sset}

Suppose that $\phi\taking F\to G$ is a morphism of fibrant $C^\infty$-rings.  Then $\phi$ is a weak equivalence if and only if $\phi(\RR)\taking F(\RR)\to G(\RR)$ is a weak equivalence of simplicial sets.

\end{proposition}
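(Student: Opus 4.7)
The plan is a two-direction argument using the characterization of fibrant objects in the Bousfield localization together with the weak product-preservation property.

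For the ``only if'' direction, I would invoke the standard fact that in a left Bousfield localization of a model category, a map between objects that are fibrant in the localized structure is a weak equivalence in the localized structure if and only if it is a weak equivalence in the underlying structure (see \cite[3.2.13 or 3.2.18]{Hir}). Since $F$ and $G$ are fibrant in $\sC$, the map $\phi$ is a weak equivalence in $\sC$ if and only if it is a weak equivalence in $\sSets^\EE$, which with the injective model structure means object-wise. In particular $\phi(\RR)$ is a weak equivalence of simplicial sets.

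For the ``if'' direction, I would use the fact that fibrancy in $\sC$ forces weak product-preservation: for any $n\in\NN$, the natural map $F(\RR^n)\to F(\RR)^n$ is a weak equivalence (by induction on $n$ from the locality condition against $p_{i,j}$; the $n=0$ case gives $F(\RR^0)\simeq\ast$ via $p_{0,0}$), and likewise for $G$. Because $F$ and $G$ are object-wise fibrant, the iterated products $F(\RR)^n$ and $G(\RR)^n$ are automatically homotopy products, so the naturality square
\[
\xymatrix{F(\RR^n)\ar[r]^-{\phi(\RR^n)}\ar[d]_-{\sim} & G(\RR^n)\ar[d]^-{\sim}\\ F(\RR)^n\ar[r]_-{\phi(\RR)^n} & G(\RR)^n}
\]
has weak equivalences on the sides, and the bottom map is a weak equivalence since products of weak equivalences between fibrant simplicial sets are weak equivalences. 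By two-out-of-three, $\phi(\RR^n)$ is a weak equivalence for every $n$, so $\phi$ is an object-wise weak equivalence, hence a weak equivalence in $\sSets^\EE$, and finally (by the same Bousfield localization fact used above) a weak equivalence in $\sC$.

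The main obstacle is merely bookkeeping: one has to be comfortable with the interplay between the injective model structure on $\sSets^\EE$ and its Bousfield localization at $P$, and one has to verify that the homotopy product of fibrant simplicial sets really is computed by the ordinary product, so that weak product-preservation lets us reduce values at $\RR^n$ to values at $\RR$. Neither of these is hard, but I would state each cleanly to avoid confusion between strict and weak product-preservation.
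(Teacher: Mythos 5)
Your proof is correct and matches the paper's intended argument (the paper's published proof is just a one-line pointer to standard facts about Bousfield localization, but a commented-out sketch in the source runs exactly as you do: reduce to object-wise weak equivalence via fibrancy in the localization, then use weak product-preservation to reduce from $\RR^n$ to $\RR$). No gaps.
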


\begin{proof}

Follows from basic facts about Bousfield localization.

\comment{Since $F$ and $G$ are fibrant in $\sC$, they are weakly equivalent in $\sC$ if and only if they are weakly equivalent in $\sSets^\E$.  By definition, this is the case if and only if, for all $i$, the induced map $\phi(\RR^i)\taking F(\RR^i)\to G(\RR^i)$ is a weak equivalence of simplicial sets.  Since $F$ and $G$ are weakly product preserving, the map $\phi(\RR^i)$ is a weak equivalence for all $i$ if and only if $\phi(\RR)$ is a weak equivalence.  The result is proved.}

\end{proof}

Note that if $F\in\sC$ is fibrant and if for all $i\in\NN$ the simplicial set $F(\RR^i)$ is a discrete set, then $F$ is a $C^\infty$-ring in the classical sense -- that is, it is a strictly product preserving functor from $\E$ to $\Sets$.  

\begin{lemma}\label{pi_0 on sC}

The functor $\pi_0\taking\sSets^\EE\to\Sets^\EE$ sends fibrant objects in $\sC$ to $C^\infty$-rings (in the classical sense).  It sends object-wise fibrant homotopy pushout diagrams in $\sC$ to pushouts of $C^\infty$-rings.

\end{lemma}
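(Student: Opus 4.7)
The first assertion follows essentially immediately from the characterization of fibrancy in $\sC$ recalled just above the lemma. A fibrant object $F$ is object-wise a Kan complex, and the comparison map $F(\RR^{i+j}) \to F(\RR^i) \cross F(\RR^j)$ is a weak equivalence of Kan complexes. Since $\pi_0 \taking \sSets \to \Sets$ preserves weak equivalences and commutes with finite products, applying $\pi_0$ yields a bijection $\pi_0 F(\RR^{i+j}) \iso \pi_0 F(\RR^i) \cross \pi_0 F(\RR^j)$. So $\pi_0 F \taking \EE \to \Sets$ is strictly product-preserving, i.e., a classical $C^\infty$-ring.

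For the second assertion, the plan is to test the pushout against discrete $C^\infty$-rings. The key lemma is: for any $F \in \sC$ and any discrete $C^\infty$-ring $R$ (regarded as a fibrant object of $\sC$), the mapping space $\Map_\sC(F, R)$ is a discrete simplicial set naturally isomorphic to $\Hom_{\Sets^\EE}(\pi_0 F, R)$. Indeed, the simplicial enrichment is unchanged by left Bousfield localization, so $\Map_\sC(F,R) = \Map_{\sSets^\EE}(F,R)$; an $n$-simplex is a map $F \cross \Delta^n \to R$ in $\sSets^\EE$, and since $R$ is level-wise a discrete set, any such map factors through $\pi_0(F \cross \Delta^n) = \pi_0 F$, giving $\Map_\sC(F,R)_n = \Hom_{\Sets^\EE}(\pi_0 F, R)$ independently of $n$. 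When $F$ is additionally fibrant, part 1 implies $\pi_0 F$ is a $C^\infty$-ring, and since morphisms of $C^\infty$-rings are just natural transformations, $\Map_\sC(F,R) \iso \Hom_{C^\infty\text{-rings}}(\pi_0 F, R)$ as a discrete set.

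Now let $A \from C \to B$ be an object-wise fibrant diagram in $\sC$ with homotopy pushout $D$. The universal property of the homotopy pushout says that for every fibrant $R$, the map
\[
\Map_\sC(D, R) \to \Map_\sC(A, R) \cross^h_{\Map_\sC(C, R)} \Map_\sC(B, R)
\]
is a weak equivalence. Specializing to a discrete $C^\infty$-ring $R$ and invoking the identification above, all four mapping spaces become discrete sets, so the homotopy pullback coincides with the ordinary pullback. This yields
\[
\Hom_{C^\infty}(\pi_0 D, R) \iso \Hom_{C^\infty}(\pi_0 A, R) \cross_{\Hom_{C^\infty}(\pi_0 C, R)} \Hom_{C^\infty}(\pi_0 B, R),
\]
which is precisely the universal property of the pushout of $\pi_0 A \from \pi_0 C \to \pi_0 B$ in the category of $C^\infty$-rings; hence $\pi_0 D$ is that pushout. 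The principal technical step is the identification $\Map_\sC(F,R) \iso \Hom_{C^\infty}(\pi_0 F, R)$ for fibrant $F$ and discrete $R$; once this is established the rest is formal, being just the universal property of homotopy pushouts applied to discrete mapping spaces.
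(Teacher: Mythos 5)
Your proof is correct, and for the second assertion it takes a genuinely different route from the paper's. For the first assertion the two arguments coincide: both use that fibrant objects of $\sC$ are weakly product-preserving and that $\pi_0$ preserves finite products and weak equivalences of Kan complexes. For the second, the paper computes the homotopy pushout concretely: it factors one leg of the diagram as a cofibration followed by an acyclic fibration, forms the ordinary objectwise pushout of the replaced diagram, and then uses that $\pi_0 \taking \sSets^\EE \to \Sets^\EE$ is a left adjoint and hence commutes with colimits. You instead argue by Yoneda: you identify $\Map_\sC(F,R)$ with the constant simplicial set $\Hom_{\Sets^\EE}(\pi_0 F, R)$ whenever $R$ is objectwise discrete, and read the universal property of the pushout of $C^\infty$-rings directly off the mapping-space characterization of the homotopy pushout. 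Your route has a real advantage: the paper's computation a priori produces the pushout in $\Sets^\EE$ and leaves implicit the step of recognizing it as the pushout in the subcategory of $C^\infty$-rings, whereas your argument establishes the universal property in the category of $C^\infty$-rings directly. The one thing you assert without justification, and which the paper nowhere proves, is the parenthetical claim that a discrete $C^\infty$-ring $R$ is fibrant in $\sC$ --- this is what licenses the use of the homotopy-pushout universal property against $R$. The claim is in fact true: because $R$ is objectwise a discrete set, for any $Y$ the space $\Map_{\sSets^\EE}(Y,R)$ is the constant simplicial set on $\Hom_{\Sets^\EE}(\pi_0 Y, R)$, so $R$ has the right lifting property against objectwise acyclic cofibrations and is therefore injective-fibrant; and $\Map(p_{i,j},R)$ is an isomorphism since $R$ is strictly product-preserving, so $R$ is $P$-local. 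You should supply this sentence, since your whole argument hinges on it; the paper's proof is set up precisely to avoid needing to know which objects are fibrant in the localized structure.
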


\begin{proof}

If $F\in\sC$ is fibrant then $\Map(p_{i,j},F)$ is a weak equivalence of simplicial sets for each $p_{i,j}\in P$.  Hence $\pi_0\Map(p_{i,j},F)$ is a bijection of sets.  Since each $p_{i,j}$ is a map between discrete objects in $\sC$, we have $\pi_0\Map(p_{i,j},F)=\Hom(p_{i,j},\pi_0F)$, so $\pi_0F$ is indeed a $C^\infty$-ring.

To prove the second assertion, suppose $$\Psi=(A\From{f}B\To{g}C)$$ is a diagram of fibrant objects in $\sC$.  Factor $f$ as a cofibration $B\to A'$ followed by an acyclic fibration $A'\to A$.  The homotopy colimit of $\Psi$ is given by the usual colimit of the diagram $\Psi'$ of functors $A'\from B\to C$.   Applying $\pi_0$ commutes with taking colimits.  Since $A'$ and $A$ are both fibrant and weakly equivalent in $\sC$, they are weakly equivalent in $\sSets^\EE$, so $\pi_0A'\to\pi_0A$ is an isomorphism in $\Sets^\EE$.  Hence we have $$\pi_0\hocolim(\Psi)\iso\pi_0\colim(\Psi')\iso\colim(\pi_0\Psi')\iso\colim(\pi_0\Psi),$$ completing the proof.

\end{proof}

The following lemma is perhaps unnecessary, but we include it to give the reader more of an idea of how classical (i.e. discrete) $C^\infty$-rings work.

\begin{lemma}\label{cinf rings are rings}

Let $F$ be a discrete fibrant $C^\infty$-ring.  Then the set $F(\RR)$ naturally has the structure of a ring.

\end{lemma}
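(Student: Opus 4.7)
The plan is to transport the algebraic structure that $\RR$ carries as a smooth manifold across the functor $F$, using the fact that a discrete fibrant object of $\sC$ is strictly product-preserving. Indeed, fibrancy of $F$ in $\sC$ means that the natural map $F(\RR^{i+j})\to F(\RR^i)\cross F(\RR^j)$ is a weak equivalence of simplicial sets for each $i,j$, and since both sides are discrete this map is a bijection. Iterating, one obtains canonical bijections $F(\RR^n)\iso F(\RR)^n$ for all $n$, and $F(\RR^0)$ is a singleton (the terminal set).

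First I would extract the operations. The smooth maps $\alpha\taking\RR^2\to\RR$ (addition), $\mu\taking\RR^2\to\RR$ (multiplication), $\nu\taking\RR\to\RR$ (negation), and the constants $0,1\taking\RR^0\to\RR$ are all morphisms of $\E$. Applying $F$ and composing with the product-preservation bijections, one obtains binary operations $+,\cdot\taking F(\RR)\cross F(\RR)\to F(\RR)$, a unary operation $-\taking F(\RR)\to F(\RR)$, and distinguished elements $0,1\in F(\RR)$.

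Next I would verify the ring axioms. For each axiom there is a corresponding equality of smooth maps between Euclidean spaces; for example, the two maps $\RR^3\to\RR$ given by $(a,b,c)\mapsto(a+b)+c$ and $(a,b,c)\mapsto a+(b+c)$ are literally equal smooth functions, so they are equal morphisms in $\E$. Applying $F$ gives equal maps $F(\RR^3)\to F(\RR)$, and unravelling the identification $F(\RR^3)\iso F(\RR)^3$ yields associativity of $+$ on $F(\RR)$. Commutativity of $+$, associativity and commutativity of $\cdot$, distributivity, the unit axioms for $0$ and $1$, and the inverse axiom for $\nu$ all reduce in exactly the same way to equalities of smooth maps that hold in $\E$.

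The only delicate point is to check that the bijection $F(\RR^n)\iso F(\RR)^n$ is compatible with the coordinate projections in the way needed to express these axioms as genuine equalities of maps out of a product. This is built into the setup: the bijection is induced by the very projection maps $p_{i,j}$ at which we localized, so its inverse is unique and strictly natural with respect to morphisms in $\E$ (such as the diagonal $\RR\to\RR^2$ used in $x+(-x)=0$). So no nontrivial obstacle arises, and the proof is essentially a dictionary translation of the fact that $\RR$ is a ring object in $\E$ into the statement that $F(\RR)$ is a ring.
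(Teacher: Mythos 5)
Your proof is correct and follows essentially the same route as the paper's: both transport the ring operations $+,\cdot,-,0,1$ on $\RR$ (as morphisms in $\E$) across the strictly product-preserving functor $F$, and observe that each ring axiom, being an equality of smooth maps between Euclidean spaces, is preserved by functoriality. You spell out the product-preservation identifications $F(\RR^n)\iso F(\RR)^n$ a bit more explicitly than the paper does, but the argument is the same.
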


\begin{proof}

Let $R=F(\RR)\in\Sets$ and note that $F(\RR^i)\iso R^i,$ and in particular $F(\RR^0)=\{*\}$ is a singleton set.  Let $0,1\taking\RR^0\to\RR$ denote the additive and multiplicative units, let $+,\vartimes\taking\RR^2\to\RR$ denote the addition and multiplication functions, and let $\iota\taking\RR\to\RR$ denote the additive inverse.  All of these are smooth functions and are hence morphisms in $\E$.  Applying $F$, we obtain elements $F(0),F(1)\taking\{*\}\to R$, two binary functions $F(+),F(\vartimes)\taking R\cross R\to R$, and a function $F(\iota)\taking R\to R$.  Since all of the ring axioms can be written as the commutativity condition on a diagram, and since $F$ is a functor and preserves commutative diagrams, one sees that the operations $F(0),F(1),F(+),F(\vartimes)$, and $F(\iota)$ satisfy all the axioms for $R$ to be a ring.  

\end{proof}

If $F\in\sC$ is fibrant but not necessarily discrete, then the ring axioms hold up to homotopy.  Note, however,  that even in the discrete case, $R=F(\RR)$ has much more structure than just that of a ring.  Any smooth function $\RR^n\to\RR^m$ gives rise to a function $R^n\to R^m$, satisfying all appropriate commutative diagrams (e.g. the functions $3^{a+b}$ and $3^a3^b$ taking $\RR^2\to\RR$ are equal, so they are sent to equal vertices of the mapping space $\Map(R^2,R)$).

\begin{example}\label{example cinf}

Let $M$ be a manifold.  Let $H_M\taking\E\to\sSets$ be defined by $H_M(\RR^i):=\Hom_\Man(M,\RR^i)$.  One checks easily that $H_M$ is a discrete fibrant $C^\infty$-ring.  We usually denote $H_M$ by $C^\infty(M)$, though this notation tends to obscure the role of $H_M$ as a functor, instead highlighting the value of $H_M$ on $\RR$, the set of $C^\infty$-functions $M\to\RR$.

Since $\sC$ is a model category, it is closed under homotopy colimits.  The homotopy colimit of the diagram $C^\infty(\RR^0)\from C^\infty(\RR)\to C^\infty(\RR^0)$, induced by the diagram of manifolds $\RR^0\To{0}\RR\From{0}\RR^0$, is an example of a non-discrete $C^\infty$-ring.

\end{example}

\begin{definition}

Let $\E^{alg}$ denote the subcategory of $\E$ whose objects are the Euclidean spaces $\RR^i$, but in which we take as morphisms only those maps $\RR^i\to\RR^j$ which are given by $j$ polynomials in the $i$ coordinate functions on $\RR^i$.  For each $i,j\in\NN$, let $H_i(\RR^j)=\Hom_{\E^\alg}(\RR^i,\RR^j)$ and let $p_{i,j}\taking H_i\amalg H_j\to H_{i+j}$ be as above (Equation \ref{eqn pij}).

Let $\sR$ denote the localization of the injective model category $\sSets^{\E^{alg}}$ at the set $\{p_{i,j}|i,j\in\NN\}$.  We call $\sR$ the category of {\em (lax) simplicial $\RR$-algebras}.

The functor $\E^{alg}\to\E$ induces a functor $U\taking\sC\to\sR$, which we refer to as {\em the underlying $\RR$-algebra functor}.  It is a right Quillen functor, and as such preserves fibrant objects.  Similarly, the functor $\sC\to\sSets$ given by $F\mapsto F(\RR)$ is a right Quillen functor, which we refer to as {\em the underlying simplicial set functor}.  Both of these right Quillen functors preserve {\em cofibrations} as well because cofibrations in all three model categories are monomorphisms.

\end{definition}

\begin{corollary}\label{we on und alg}

The functor $U\taking\sC\to\sR$ preserves and reflects weak equivalences between fibrant objects.

\end{corollary}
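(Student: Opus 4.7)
The plan is to reduce both weak equivalence conditions to the single condition that the induced map on underlying simplicial sets $\phi(\RR)$ is a weak equivalence, and then conclude by equating them. The key input is Proposition \ref{we on und sset} together with its analogue for $\sR$.

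First I would observe that $U\taking\sC\to\sR$ is a right Quillen functor, so it carries fibrant objects to fibrant objects. Hence if $\phi\taking F\to G$ is a map of fibrant objects in $\sC$, then $U\phi\taking UF\to UG$ is a map of fibrant objects in $\sR$. Moreover, by the definition of $U$ as restriction along the inclusion $\E^{alg}\inj\E$, we have $(UF)(\RR^i)=F(\RR^i)$ for every $i$; in particular $U\phi(\RR)=\phi(\RR)$ as a morphism of simplicial sets.

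Next I would establish the evident analogue of Proposition \ref{we on und sset} in the category $\sR$: a map between fibrant objects of $\sR$ is a weak equivalence if and only if its value at $\RR$ is a weak equivalence of simplicial sets. This follows by exactly the same Bousfield-localization argument as in Proposition \ref{we on und sset}: fibrant objects of $\sR$ are object-wise fibrant functors $\E^{alg}\to\sSets$ that send each $p_{i,j}$ to a weak equivalence, hence are weakly product-preserving, so a map between such objects is a weak equivalence in $\sR$ iff it is an object-wise weak equivalence iff it is a weak equivalence at $\RR$.

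Combining the two criteria yields the corollary: $\phi$ is a weak equivalence in $\sC$ iff $\phi(\RR)$ is a weak equivalence of simplicial sets (by Proposition \ref{we on und sset}) iff $U\phi(\RR)$ is a weak equivalence of simplicial sets (since these are the same map) iff $U\phi$ is a weak equivalence in $\sR$ (by the analogue just established). This gives both preservation and reflection simultaneously. There is no significant obstacle here; the only point requiring a sentence of justification is the analogue of Proposition \ref{we on und sset} for $\sR$, which is essentially formal from the construction of $\sR$ as a left Bousfield localization of $\sSets^{\E^{alg}}$ at the maps $p_{i,j}$.
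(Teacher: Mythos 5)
Your proof is correct and follows essentially the same approach as the paper: use that $U$ is right Quillen to preserve fibrancy, observe $U\phi(\RR)=\phi(\RR)$, and invoke Proposition \ref{we on und sset} together with its $\sR$-analogue to reduce both weak-equivalence conditions to the single criterion at $\RR$.
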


\begin{proof}

Since $U$ is a right Quillen functor, it preserves fibrant objects.  The result now follows from Proposition \ref{we on und sset} and the corresponding fact about $\sR$.

\end{proof}

We do not need the following proposition, but include it for the reader's convenience and edification.  For example, it may help orient the reader to Section \ref{cot} on cotangent complexes.

\begin{proposition}\label{prop:sr and dgas}

The model category $\sR$ of simplicial $\RR$-algebras is Quillen equivalent to the model category of connective commutative differential graded $\RR$-algebras.

\end{proposition}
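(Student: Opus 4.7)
The plan is to establish the Quillen equivalence in two stages: first pass from the lax model $\sR$ to the strict model (simplicial commutative $\RR$-algebras), and then apply the classical Dold--Kan-type equivalence between simplicial commutative $\RR$-algebras and connective commutative differential graded $\RR$-algebras.

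For the first stage, I would identify $\EE^{alg}$ (up to equivalence) as Lawvere's algebraic theory of commutative $\RR$-algebras: strictly product-preserving functors $\EE^{alg}\to\Sets$ are precisely commutative $\RR$-algebras, since the morphism sets $\Hom_{\EE^{alg}}(\RR^i,\RR^j)$ are given by tuples of polynomials and this recovers exactly the free $\RR$-algebra functor. Consequently, strictly product-preserving functors $\EE^{alg}\to\sSets$ form the usual category $s\mathbf{Alg}_\RR$ of simplicial commutative $\RR$-algebras, with its standard (transferred) model structure. The comparison between lax and strict algebras over an algebraic theory is a general result due to Badzioch and Bergner (for the $\sSets$-enriched case see also \cite[Section 15]{Lur-DAG1}, which is cited in the paper to handle the analogous statement for $\sC$): the forgetful functor $s\mathbf{Alg}_\RR\to\sR$ is the right Quillen functor of a Quillen equivalence. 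Here one uses that $\sR$ was defined precisely as the injective-model localization at the product-decomposition maps $p_{i,j}$, so fibrant objects in $\sR$ are exactly the homotopy-product-preserving presheaves.

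For the second stage, one invokes Quillen's theorem (see \cite[II.\S4--5]{Quillen} or the modern treatment in Goerss--Schemmerhorn): over a $\QQ$-algebra, and in particular over $\RR$, the normalized chain complex functor
\[
N\taking s\mathbf{Alg}_\RR \too \mathbf{CDGA}^{\geq 0}_\RR
\]
is the right adjoint of a Quillen equivalence, where its left adjoint is built from the Eilenberg--Zilber/shuffle product. Weak equivalences and fibrations on both sides are detected on underlying simplicial sets / chain complexes, so the standard argument that $N$ creates these, together with the fact that its derived unit and counit are quasi-isomorphisms (this is exactly where characteristic zero is used, via the existence of divided powers or, equivalently, the symmetry of the shuffle product), gives the equivalence.

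Composing the two Quillen equivalences yields the desired statement. The main obstacle is the second stage: without the characteristic zero hypothesis, the polynomial and symmetric operations on a simplicial commutative algebra do not match those on a CDGA, and the equivalence fails (Kriz--May). In our setting this is automatic because we work over $\RR$, but it is the essential input and must be cited carefully. The first stage is largely formal, modulo verifying that $\EE^{alg}$ is indeed the Lawvere theory of commutative $\RR$-algebras and that the lax-versus-strict comparison applies to the injective localization used to define $\sR$.
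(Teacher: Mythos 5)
Your proposal is correct and follows essentially the same two-stage decomposition as the paper: it invokes \cite[15.3]{Lur-DAG1} for the lax-versus-strict comparison and a characteristic-zero normalization/Dold--Kan argument for the equivalence between strict simplicial commutative $\RR$-algebras and connective CDGAs. The only cosmetic difference is that the paper cites Schwede--Shipley \cite[1.1(3)]{ss} for the second stage where you cite Quillen's rational homotopy theory (and Goerss--Schemmerhorn), but these are different references for the same classical fact.
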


\begin{proof}

In \cite[1.1 (3)]{ss}, Schwede and Shipley prove that the model category of connective commutative differential grade $\RR$-algebras is Quillen equivalent to the model category of strict simplicial commutative $\RR$-algebras.  This is in turn Quillen equivalent to the category of lax simplicial commutative $\RR$-algebras by \cite[15.3]{Lur-DAG1}.

\end{proof}

\begin{definition}\label{local cinf rings}

A $C^\infty$-ring $F$ is called {\em local} if its underlying discrete $\RR$-algebra $\pi_0(U(F))$ is local in the usual sense, and a morphism $\phi\taking F\to G$ of local $C^\infty$-rings is called {\em local} if its underlying morphism of discrete $\RR$-algebras $\pi_0(U(\phi))$ is local in the usual sense.

\end{definition}

We will give a more intuitive version of the locality condition in Proposition \ref{equiv locality cond}.  

\begin{remark}

We regret the overuse of the word local; when we ``localize" a model category to add weak equivalences, when we demand a ``locality condition" on the stalks of a ringed space, and later when we talk about derived manifolds as having ``local models" (as the local models for manifolds are Euclidean spaces), we are using the word local in three different ways.  But each is in line with typical usage, so it seems that overloading the word could not be avoided.  

\end{remark}

\section{Local $C^\infty$-ringed spaces and derived manifolds}\label{lrs}

In this section we define the category of (lax simplicial) $C^\infty$-ringed spaces, a subcategory called the category of local $C^\infty$-ringed spaces, and a full subcategory of that called the category of derived manifolds.  These definitions resemble those of ringed spaces, local ringed spaces, and schemes, from algebraic geometry.  Ordinary $C^\infty$-ringed spaces and $C^\infty$-schemes have been studied for quite a while: See \cite{Law-Cat},\cite{Dub}, and \cite{MR}.

Let $X$ be a topological space, and let  $\Op(X)$ denote the category of open inclusions in $X$.   A homotopy sheaf of simplicial sets on $X$ is a functor $F\taking\Op(X)\op\to\sSets$ which is fibrant as an object of $\sSets^{\Op(X)\op}$ and which satisfies a ``homotopy descent condition."  Roughly, the descent condition says that given open sets $U$ and $V$, a section of $F$ over each one, and a {\em choice of homotopy} between the restricted sections on $U\cap V$, there is a homotopically unique section over $U\cup V$ which restricts to the given sections.  Jardine showed in \cite{Jar} that there is a model category structure on $\sSets^{\Op(X)\op}$ in which the fibrant objects are homotopy sheaves on $X$; we denote this model category $\Shv(X,\sSets)$.  

In \cite{DHI}, the authors show that $\Shv(X,\sSets)$ is a localization of the injective model structure on $\sSets^{\Op(X)\op}$ at a certain set of morphisms (called the hypercovers) to obtain $\Shv(X,\sSets)$.  From this, one deduces that $\Shv(X,\sSets)$ is a left proper, cofibrantly generated simplicial model category in which all objects are cofibrant.  A weak equivalence between fibrant objects of $\Shv(X,\sSets)$ is a morphism which restricts to a weak equivalence on every open subset of $X$.

We wish to find a suitable category of homotopy sheaves of $C^\infty$-rings.  This will be obtained as a localization of the injective model structure on $\sSets^{\Op(X)\op\cross\E}$.  

\begin{proposition}

Let $A$ and $B$ be categories, and let $P$ be a set of morphisms in $\sSets^A$ and $Q$ a set of morphisms in $\sSets^B$.  There exists a localization of $\sSets^{A\cross B}$, called {\em the factor-wise localization} and denoted $\mcM$, in which an object $F\in\mcM$ is fibrant if and only if \begin{enumerate}\item $F$ is fibrant as an object of $\sSets^{A\cross B}$\item for each $a\in A$, the induced object $F(a,-)\in\sSets^B$ is $Q$-local, and \item for each $b\in B$, the induced object $F(-,b)\in\sSets^A$ is $P$-local.\end{enumerate}

\end{proposition}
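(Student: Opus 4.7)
The plan is to construct $\mcM$ as a left Bousfield localization of the injective model structure on $\sSets^{A\cross B}$ at an explicit set of morphisms. For each $b \in B$, write $i_b \taking A \to A \cross B$ for the functor $a \mapsto (a, b)$, with restriction $i_b^* \taking \sSets^{A\cross B} \to \sSets^A$ given by $(i_b^* F)(a) = F(a, b)$, and left adjoint $(i_b)_!$ (left Kan extension along $i_b$). Define $j_a \taking B \to A\cross B$ and $(j_a)_!$ analogously for each $a \in A$. Set
$$S \;=\; \{(i_b)_! p \mid p \in P,\, b \in B\} \;\cup\; \{(j_a)_! q \mid q \in Q,\, a \in A\},$$
and define $\mcM$ to be the left Bousfield localization of $\sSets^{A\cross B}$ at $S$. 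By Convention \ref{convention injective}, $\sSets^{A\cross B}$ is a left proper, combinatorial, simplicial model category in which every object is cofibrant, so the localization exists by Hirschhorn's theorem on left Bousfield localizations.

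Next I would verify that $(i_b)_! \dashv i_b^*$ is a Quillen adjunction between the injective model structures (and likewise for $(j_a)_! \dashv j_a^*$). A direct computation of the comma category $(i_b \downarrow (a', b'))$ yields the explicit formula $(i_b)_! X\,(a', b') = X(a') \cross \Hom_B(b, b')$; in other words $(i_b)_! X$ is obtained from $X$ by tensoring in the $B$-direction with the representable functor $\Hom_B(b, -)$. Since evaluation at any $(a', b')$ produces a disjoint union of copies of $X(a')$ indexed by the set $\Hom_B(b, b')$, the functor $(i_b)_!$ visibly preserves objectwise monomorphisms and objectwise weak equivalences of simplicial sets, which are exactly the cofibrations and weak equivalences in the injective model structures. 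This explicit formula is the main technical content of the argument.

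The characterization of the fibrant objects of $\mcM$ is then formal. By the general theory of left Bousfield localization, $F$ is $\mcM$-fibrant if and only if $F$ is fibrant in $\sSets^{A\cross B}$ and is $S$-local, meaning the derived mapping space out of $F$ inverts every morphism in $S$. For $p \taking X \to Y$ in $P$ and $b \in B$, the Quillen adjunction $(i_b)_! \dashv i_b^*$ supplies a natural weak equivalence
$$\Map_{\sSets^{A\cross B}}((i_b)_! X,\, F) \;\simeq\; \Map_{\sSets^A}(X,\, F(-, b)),$$
so locality of $F$ with respect to the first family of morphisms in $S$ is equivalent to $F(-, b)$ being $P$-local for every $b \in B$. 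Applying the symmetric argument to $(j_a)_! q$ yields the condition that $F(a, -)$ is $Q$-local for every $a \in A$, completing the equivalence of the three conditions with $\mcM$-fibrancy.
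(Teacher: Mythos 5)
Your proof is correct, but it takes a genuinely different route from the paper. The paper takes $L_A\taking\sSets^A\to\sSets^{A\cross B}$ and $L_B\taking\sSets^B\to\sSets^{A\cross B}$ to be the restriction functors $\pi_A^*,\pi_B^*$ along the two projections $\pi_A\taking A\cross B\to A$ and $\pi_B\taking A\cross B\to B$ (these are left Quillen in the injective structures, with right adjoints the right Kan extensions), localizes at $L_A(P)\cup L_B(Q)$, and then cites \cite[3.1.12]{Hir}. You instead localize at the images of $P$ and $Q$ under the left Kan extensions $(i_b)_!,(j_a)_!$ along the fiber inclusions $i_b\taking A\to A\cross B$, $j_a\taking B\to A\cross B$; your identification $(i_b)_!X(a',b')\iso X(a')\cross\Hom_B(b,b')$ is what makes $(i_b)_!$ visibly left Quillen and is the one nontrivial check. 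The difference in localizing sets is not cosmetic. Your adjunction gives $\Map((i_b)_!X,F)\we\Map(X,F(-,b))$, so locality with respect to $(i_b)_!P$ is \emph{literally} the statement that $F(-,b)$ is $P$-local, and the stated characterization of fibrant objects drops out with no further work. The paper's adjunction instead gives $\Map(\pi_A^*X,F)\we\Map(X,(\pi_A)_*F)$, where $(\pi_A)_*F(a)=\lim_{b\in B}F(a,b)$ is a single (homotopy) limit over $B$ rather than the family of slices $\{F(-,b)\}_b$. When $B$ is discrete this limit is the product of the slices and the two conditions agree, but when $B$ has non-identity morphisms the equivalence between ``$(\pi_A)_*F$ is $P$-local'' and ``each $F(-,b)$ is $P$-local'' requires an additional argument that the paper leaves implicit. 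Your choice of localizing set thus buys a cleaner and more robust derivation of the fibrancy characterization, at the cost of using a set indexed over $\Ob(A)\amalg\Ob(B)$ rather than just two families.
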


\begin{proof}

The projection $A\cross B\to A$ induces a Quillen pair $$\Adjoint{L_A}{\sSets^A}{\sSets^{A\cross B}}{R_A}$$ (see Convention \ref{convention injective}).  Similarly, there is a Quillen pair $(L_B,R_B)$.  The union of the image of $P$ under $L_A$ and the image of $Q$ under $L_B$ is a set of morphisms in $\sSets^{A\cross B}$, which we denote $R=L_A(P)\amalg L_B(Q)$.   Let $\mcM$ denote the localization of $\sSets^{A\cross B}$ at $R$.  The result follows from \cite[3.1.12]{Hir}.

\end{proof}

\begin{definition}

Let $X$ be a topological space.  Let $Q$ be the set of hypercovers in $\sSets^{\Op(X)\op}$ and let $P=\{p_{i,j}|i,j\in\NN\}$ denote the set of maps from Definition \ref{definition sC}.  We denote by $\Shv(X,\sC)$ the factor-wise localization of $\sSets^{\Op(X)\op\cross\E}$ with respect to $Q$ and $P$, and refer to it as {\em the model category of sheaves of $C^\infty$-rings  on $X$}.

We refer to $F\in\Shv(X,\sC)$ as {\em a sheaf of $C^\infty$-rings on $X$} if $F$ is fibrant; otherwise we simply refer to it as an object of $\Shv(X,\sC)$.

\end{definition}

Similarly, one defines the model category of sheaves of simplicial $\RR$-algebras, denoted $\Shv(X,\sR)$, as the factor-wise localization of $\sSets^{\Op(X)\op\cross\E^{alg}}$ with respect to the same sets,  $Q$ and $P$.

Given a morphism of topological spaces $f\taking X\to Y$, one has push-forward and pullback functors $f_*\taking\Shv(X,\sC)\to\Shv(Y,\sC)$ and $f^\m1\taking\Shv(Y,\sC)\to\Shv(X,\sC)$ as usual.  The functor $f^\m1$ is left adjoint to $f_*$, and this adjunction is a Quillen adjunction.  Recall that when we speak of sheaves on $X$, we always mean fibrant objects in $\Shv(X)$.  To that end, we write $f^*\taking\Shv(Y,\sC)\to\Shv(X,\sC)$ to denote the composition of $f^\m1$ with the fibrant replacement functor.  

\begin{definition}\label{local}

Let $X\in\CG$ be a topological space.  An object $F\in\Shv(X,\sC)$ is called {\em a sheaf of local $C^\infty$-rings on $X$} if \begin{enumerate}\item $F$ is fibrant, and \item for every point $p\taking\{*\}\to X$, the stalk $p^*F$ is a local $C^\infty$-ring.\end{enumerate}  In this case, the pair $(X,F)$ is called a {\em local $C^\infty$-ringed space}.

Let $F$ and $G$ be sheaves of local $C^\infty$-rings on $X$.  A morphism $a\taking F\to G$ is called {\em a local morphism} if, for every point $p\taking \{*\}\to X$, the induced morphism on stalks $p^*(a)\taking p^*(F)\to p^*(G)$ is a local morphism (see Definition \ref{local cinf rings}).  We denote by $\Map_\loc(F,G)$ the simplicial subset of $\Map(F,G)$ spanned by the vertices $a\in\Map(F,G)_0$ which represent local morphisms $a\taking F\to G$.

Let $(X,\mcO_X)$ and $(Y,\mcO_Y)$ denote local $C^\infty$-ringed spaces.  A {\em morphism of local $C^\infty$-ringed spaces} $$(f,f^\sharp)\taking(X,\mcO_X)\to(Y,\mcO_Y)$$ consists of a map of topological spaces $f\taking X\to Y$ and a morphism $f^\sharp\taking f^*\mcO_Y\to\mcO_X$, such that $f^\sharp$ is a local morphism of sheaves of $C^\infty$-rings on $X$.  

More generally, we define a simplicial category $\LRS$ whose objects are the local ringed spaces and whose mapping spaces have as vertices the morphisms of local $C^\infty$-ringed spaces.  Precisely, we define for local $C^\infty$-ringed spaces $(X,\mcO_X)$ and $(Y,\mcO_Y)$ the mapping space $$\Map_\LRS((X,\mcO_X),(Y,\mcO_Y)):=\coprod_{f\in\Hom_{\CG}(X,Y)}\Map_\loc(f^*\mcO_Y,\mcO_X).$$

\end{definition}

\begin{example}

Given a local $C^\infty$-ringed space $(X,\mcO_X)$, any subspace is also a local $C^\infty$-ringed space.  For example, a manifold with boundary is a local $C^\infty$-ringed space.  We do not define derived manifolds with boundary in this paper, but we could do so inside the category of local $C^\infty$-ringed spaces.  In fact, that was done in the author's dissertation \cite{Spi}.

If $i\taking U\ss X$ is the inclusion of an open subset, then we let $\mcO_U=i^*\mcO_X$ be the restricted sheaf, and we refer to the local $C^\infty$-ringed space $(U,\mcO_U)$ as the {\em open subobject of $\mcX$ over $U\ss X$}.

\end{example}

\begin{definition}\label{def:equiv}

A map $(f,f^\sharp)\taking(X,\mcO_X)\to(Y,\mcO_Y)$ is an {\em equivalence of local $C^\infty$-ringed spaces} if $f\taking X\to Y$ is a homeomorphism of topological spaces and $f^\sharp\taking f^*\mcO_Y\to\mcO_X$ is a weak equivalence in $\Shv(X,\sC)$.

\end{definition}

\begin{remark}

The relation which we called equivalence of local $C^\infty$-ringed spaces in Definition \ref{def:equiv} is clearly reflexive and transitive.  Since the sheaves $\mcO_X$ and $\mcO_Y$ are assumed cofibrant-fibrant, it is symmetric as well (see for example \cite[7.5.10]{Hir}).  

\end{remark}

\begin{remark}

Note that the notion of equivalence in Definition \ref{def:equiv} is quite strong.  If $(f,f^\sharp)$ is an equivalence, the underlying map $f$ of spaces is a homeomorphism, and the morphism $\pi_0(f^\sharp)$ is an isomorphism of the underlying sheaves of $C^\infty$-rings.  Therefore equivalence in this sense should not be thought of as a generalization of homotopy equivalence, but instead a generalization of diffeomorphism.

\end{remark}

In the next lemma, we will need to use the {\em mapping cylinder} construction for a morphism $f\taking A\to B$ in a model category.  To define it, factor the morphism $f\amalg\id_B\taking A\amalg B\to B$ as a cofibration followed by an acyclic fibration, $$\xymatrix@1{A\amalg B\;\;\ar@{>->}[r]& Cyl(f)\ar@{->>}^\we[r]& B;}$$ the intermediate object $Cyl(f)$ is called the mapping cylinder.  Note that if $f$ is a weak equivalence, then so are the induced cofibrations $A\to\Cyl(f)$ and $B\to\Cyl(f)$; consequently, $A$ and $B$ are strong deformation retracts of $\Cyl(f)$.

\begin{lemma}\label{lemma:union}

Suppose that $\mcX=(X,\mcO_X)$ and $\mcY=(Y,\mcO_Y)$ are local $C^\infty$-ringed spaces and that $\mcU=(U,\mcO_U)$ and $\mcU'=(U',\mcO_{U'})$ are open subobjects of $\mcX$ and $\mcY$ respectively, and suppose that $\mcU$ and $\mcU'$ are equivalent.  Then in particular there is a homeomorphism $U\iso U'$.  

If the union $X\amalg_UY$ of underlying topological spaces is Hausdorff, then there is a local $C^\infty$-ringed space denoted $\mcX\cup\mcY$ with underlying space $X\amalg_UY$ and structure sheaf $\mcO$, such that the diagram \begin{eqnarray}\label{dia:union}\xymatrix{\mcU\ar[r]\ar[d]\ar[rd]^k&\mcY\ar[d]^j\\\mcX\ar[r]_i&\mcX\cup\mcY}\end{eqnarray} commutes (up to homotopy), and the natural maps $i^*\mcO\to\mcO_X, j^*\mcO\to\mcO_Y,$ and $k^*\mcO\to\mcO_U$ are weak equivalences.

\end{lemma}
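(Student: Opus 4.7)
The plan is to construct the underlying topological space of $\mcX \cup \mcY$ as the pushout $Z := X \amalg_U Y$ in $\CG$, which is Hausdorff by hypothesis, and then to build the structure sheaf $\mcO$ on $Z$ by homotopy descent along the open cover $\{i(X), j(Y)\}$, whose pairwise intersection is the common image $\tilde U := i(U) = j(U') \subseteq Z$. Since $i$ and $j$ are open embeddings, any open $W \subseteq Z$ is determined by the opens $W_X := i^\m1(W) \subseteq X$ and $W_Y := j^\m1(W) \subseteq Y$, and these intersect in the common open $W_U := W \cap \tilde U \subseteq \tilde U$.

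To get the structure sheaf, I would first use the equivalence $\mcU \simeq \mcU'$ in $\LRS$ to obtain, after identifying $U$ with $U'$ via the induced homeomorphism, a weak equivalence of sheaves of $C^\infty$-rings on $\tilde U$ between $\mcO_U$ and $\mcO_{U'}$. Apply the mapping cylinder construction in $\Shv(\tilde U,\sC)$ to this equivalence to obtain a fibrant object $\wt{\mcO_U}$ fitting into a zig-zag of cofibrant weak equivalences $\mcO_U \xrightarrow{\sim} \wt{\mcO_U} \xleftarrow{\sim} \mcO_{U'}$. For each open $W \subseteq Z$, set
\begin{equation*}
\mcO(W) \;:=\; \mcO_X(W_X) \times^h_{\wt{\mcO_U}(W_U)} \mcO_Y(W_Y),
\end{equation*}
where the maps to $\wt{\mcO_U}(W_U)$ are the restrictions $\mcO_X(W_X)\to\mcO_U(W_U)$ and $\mcO_Y(W_Y)\to\mcO_{U'}(W_U)$ composed with the cofibrations above. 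Functoriality in $W$ is automatic, so $\mcO$ is a presheaf.

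The key verifications are then three in number. First, $\mcO$ is a homotopy sheaf: descent for a cover of $W$ reduces (by the interchange of homotopy limits) to descent for $\mcO_X, \mcO_Y,$ and $\wt{\mcO_U}$, each of which holds because these are sheaves on their respective spaces. Second, the natural maps $i^*\mcO\to\mcO_X$, $j^*\mcO\to\mcO_Y$, and $k^*\mcO\to\mcO_U$ are weak equivalences: it suffices to test on a basis of opens contained entirely in $i(X)$ (respectively $j(Y)$, $\tilde U$); for such $W \subseteq i(X)$ we have $W_Y = W_U$ and the map $\mcO_Y(W_Y)\to\wt{\mcO_U}(W_U)$ is a weak equivalence, so the homotopy pullback collapses to $\mcO_X(W_X)$, giving the desired weak equivalence. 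Third, $\mcO$ has local stalks: at a point $p\in i(X)\setminus\tilde U$ the basis of small opens around $p$ lies in $i(X)\setminus\tilde U$, so the stalk is $\mcO_{X,p}$, which is local by hypothesis; symmetrically for $p\in j(Y)\setminus\tilde U$; and at a point $p \in \tilde U$ the homotopy pullback of the three equivalent objects $\mcO_{X,p}\to\wt{\mcO_U}{}_{,p}\leftarrow\mcO_{Y,p}$ is again equivalent to $\mcO_{X,p}$ and hence local.

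The main obstacle is bookkeeping: ensuring that $\mcO$ as defined is genuinely fibrant in $\Shv(Z,\sC)$ and that the structure maps $i^\sharp,j^\sharp,k^\sharp$ really are local morphisms and not merely stalkwise equivalences, so that Diagram (\ref{dia:union}) commutes up to homotopy in $\LRS$. This is handled by choosing the mapping-cylinder fibrant model $\wt{\mcO_U}$ once and for all on $\tilde U$ (rather than open-by-open) and by modelling the homotopy pullback by the strict pullback of a Reedy-fibrant replacement of the span, so that the construction is strictly functorial in $W$ and injective-fibrant. The commutativity of Diagram (\ref{dia:union}) and the claimed weak equivalences then follow from the universal property of the homotopy pullback applied sectionwise.
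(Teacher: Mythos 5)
Your proposal is correct and takes essentially the same approach as the paper: replace the equivalence $\mcO_U\simeq\mcO_{U'}$ by its mapping cylinder to get a strictly fibrant common model on $U$, define $\mcO$ sectionwise as the homotopy pullback $i_*\mcO_X\times^h_{k_*\wt{\mcO_U}}j_*\mcO_Y$ over $X\amalg_U Y$, and then verify the three restriction equivalences and locality by checking on the basis of opens coming from $X$ and $Y$. The paper's proof is terser (it writes the structure sheaf directly as a homotopy limit of pushforwards and handles locality in one line by noting the condition is local), but the content is identical.
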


\begin{proof}

We define the structure sheaf $\mcO$ as follows.  First, let $V=U$, let $\mcO_V$ denote the mapping cylinder for the equivalence $\mcO_U\to\mcO_{U'}$, and let $\mcV=(V,\mcO_V)$.  Then the natural maps $\mcU,\mcU'\to\mcV$ are equivalences, and we have natural maps $\mcV\to\mcX$ and $\mcV\to\mcY$ which are equivalent to the original subobjects $\mcU\to\mcX$ and $\mcU'\to\mcY$.   

Redefine $k$ to be the natural map $k\taking V\to X\amalg_VY$.  We take $\mcO$ to be the homotopy limit in $\Shv(X\amalg_VY,\sC)$ given by the diagram $$\xymatrix{\mcO\ar[r]\ar[d]\ulhlimit&j_*\mcO_Y\ar[d]\\i_*\mcO_X\ar[r]&k_*\mcO_V.}$$  

On the open set $X\ss X\cup Y$, the structure sheaf $\mcO$ restricts to $\mcO_X(X)\cross_{\mcO_V(V)}\mcO_Y(V)$.  Since $\mcV$ is an open subobject of $\mcY$, we in particular have an equivalence $\mcO_Y(V)\we\mcO_V(V)$.  This implies that $\mcO(X)\to\mcO_X(X)$ is a weak equivalence.  The same holds for any open subset of $X$, so we have a weak equivalence $i^*\mcO\iso\mcO_X$.  Symmetric reasoning implies that $j^*\mcO\to\mcO_Y$ is also a weak equivalence. 

The property of being a local sheaf is local on $X\amalg_VY$, and the open sets in $X$ and in $Y$ together form a basis for the topology on $X\amalg_VY$.  Thus, $\mcO$ is a local sheaf on $X\amalg_VY$.

\end{proof}

\begin{proposition}\label{unions and colimits in LRS}

Let $\mcX, \mcY,\mcU$ and $\mcU'$ be as in Lemma \ref{lemma:union}.  Then Diagram \ref{dia:union} is a homotopy colimit diagram.  

\end{proposition}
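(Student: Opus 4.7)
The plan is to verify the homotopy-universal property of the pushout directly: for every local $C^\infty$-ringed space $\mcZ=(Z,\mcO_Z)$, I will show that the natural map
\[
\Map_\LRS(\mcX\cup\mcY,\mcZ)\to\Map_\LRS(\mcX,\mcZ)\cross^h_{\Map_\LRS(\mcU,\mcZ)}\Map_\LRS(\mcY,\mcZ)
\]
is a weak equivalence. I would factor this verification along the fibration $\Map_\LRS((X,\mcO_X),(Z,\mcO_Z))\to\Hom_{\CG}(X,Z)$ coming from Definition \ref{local}, so that it suffices to check the claim fiber-by-fiber over a continuous map of underlying spaces $f\taking X\amalg_U Y\to Z$.

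First I would handle the underlying topological data. Because the hypothesis guarantees $X\amalg_U Y$ is Hausdorff (hence in $\CG$) and $\U$ commutes with this sort of colimit, the set of continuous maps $X\amalg_U Y\to Z$ is in bijection with compatible pairs of continuous maps $X\to Z$ and $Y\to Z$ agreeing on $U$; this gives the underlying-space piece of the universal property. Next I would fix such an $f$ and analyze the sheaf component. Writing $f_X=f\circ i$, $f_Y=f\circ j$, $f_V=f\circ k$, the structure sheaf $\mcO$ was defined as a homotopy limit of sheaves on $X\amalg_V Y$, so applying $\Map(f^*\mcO_Z,-)$ and using that mapping into a homotopy limit of fibrant objects computes the homotopy limit of mapping spaces yields
\[
\Map(f^*\mcO_Z,\mcO)\we\Map(f^*\mcO_Z,i_*\mcO_X)\cross^h_{\Map(f^*\mcO_Z,k_*\mcO_V)}\Map(f^*\mcO_Z,j_*\mcO_Y).
\]
By $(f^\m1,f_*)$-adjunction, the three terms on the right are equivalent to $\Map(f_X^*\mcO_Z,\mcO_X)$, $\Map(f_V^*\mcO_Z,\mcO_V)$, and $\Map(f_Y^*\mcO_Z,\mcO_Y)$ respectively, i.e.\ exactly the fibers of the target of the universal-property map over our chosen $f_X,f_Y,f_V$.

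The main remaining issue, and the step I expect to be the principal obstacle, is correctly threading the locality condition through the homotopy limit, since $\Map_\loc$ is only a certain union of connected components of $\Map$. The idea is to observe that locality is checked on stalks (Definition \ref{local cinf rings}), and at any point $p\in X\amalg_U Y$ the stalk $p^*\mcO$ agrees (up to weak equivalence) with either $p^*\mcO_X$, $p^*\mcO_Y$, or $p^*\mcO_V$ by the weak equivalences $i^*\mcO\we\mcO_X$, $j^*\mcO\we\mcO_Y$, and $k^*\mcO\we\mcO_V$ established in Lemma \ref{lemma:union}. Consequently a morphism $f^*\mcO_Z\to\mcO$ is local iff its restrictions to $\mcX$, $\mcY$, and $\mcU$ are all local, and the equivalence displayed above restricts to an equivalence on the corresponding unions of components. (Moreover, because $\mcU\to\mcX$ is an open subobject, locality of the restriction to $\mcX$ already forces locality on $\mcU$, which is consistent with homotopy-pullback-of-mapping-spaces picture.)

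Combining these two paragraphs gives the desired equivalence on total mapping spaces, proving that Diagram \ref{dia:union} satisfies the defining universal property of a homotopy pushout in $\LRS$, and hence is a homotopy colimit diagram.
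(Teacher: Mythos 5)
Your proposal is correct and follows essentially the same route as the paper's proof: both decompose the mapping spaces over the discrete indexing sets of underlying continuous maps (the paper invokes Lemma \ref{holim lemma} for this reduction), then exploit the homotopy-limit definition of $\mcO$ together with the $(f^*,f_*)$-adjunction, and finally handle the locality condition by a stalkwise argument. The one place the paper is slightly more explicit is in verifying that, for an open inclusion $\ell$, the adjunction isomorphism $\Map(\mcF,\ell_*\mcG)\iso\Map(\ell^*\mcF,\mcG)$ restricts to an isomorphism of $\Map_\loc$'s; your stalk-comparison observation is doing the same work, so no gap.
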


\begin{proof}

Let $\mcZ\in\LRS$ denote a local $C^\infty$-ringed space, and let $\mcV\we\mcU\we\mcU'$, $\mcX\cup\mcY = (X\amalg_VY,\mcO)$, and $k\taking V\to X\amalg_VY$ be as in the proof of Lemma \ref{lemma:union}.  We must show that the natural map \begin{align}\label{eqn:col1}\Map(\mcX\cup\mcY,\mcZ)\to\Map(\mcX,\mcZ)\cross_{\Map(\mcV,\mcZ)}\Map(\mcY,\mcZ)\end{align} is a weak equivalence.

Recall that for local $C^\infty$-ringed spaces $\mcA,\mcB$, the mapping space is defined as $$\Map_\LRS(\mcA,\mcB)=\coprod_{f\taking A\to B}\Map_{\loc}(f^*\mcO_B,\mcO_A),$$ i.e. it is a disjoint union of spaces, indexed by maps of underlying topological spaces $A\to B$.  Since $X\amalg_VY$ is the colimit of $X\from V\to Y$ in $\CG$, the indexing set of $\Map_\LRS(\mcX\cup\mcY,\mcZ)$ is the fiber product of the indexing sets for the mapping spaces on the right hand side of Equation \ref{eqn:col1}.  Thus, we may apply Lemma \ref{holim lemma} to reduce to the case of a single index $f\taking X\amalg_VY\to Z$.  That is, we must show that the natural map \small\begin{align}\label{eqn:col2}\Map_\loc(f^*\mcO_Z,\mcO)\to\Map_\loc(i^*f^*\mcO_Z,\mcO_X)\cross_{\Map_\loc(k^*f^*\mcO_Z,\mcO_V)}\Map_\loc(j^*f^*\mcO_Z,\mcO_Y),\end{align}\normalsize indexed by $f$, is a weak equivalence.  By definition of $\mcO$, we have the first weak equivalence in the following display \small\begin{align*}\tag{6.8.3}\Map_\loc(f^*\mcO_Z,\mcO)&\we\Map_\loc(f^*\mcO_Z,i_*\mcO_X\cross_{k_*\mcO_V}j_*\mcO_Y)\\&\we\Map_\loc(f^*\mcO_Z,i_*\mcO_X)\cross_{\Map_\loc(f^*\mcO_Z,k_*\mcO_V)}\Map_\loc(f^*\mcO_Z,j_*\mcO_Y),\end{align*}\normalsize and the second weak equivalence follows from the fact that the property of a map being local is itself a local property.

Note that $i,j,$ and $k$ are open inclusions.  We have reduced to the following: given an open inclusion of topological spaces $\ell\taking A\ss B$ and given sheaves $\mcF$ on $A$ and $\mcG$ on $B$, we have a solid-arrow diagram $$\xymatrix{\Map_\loc(\mcF,\ell_*\mcG)\ar@{-->}[r]\ar[d]&\Map_\loc(\ell^*\mcF,\mcG)\ar[d]\\\Map(\mcF,\ell_*\mcG)\ar[r]_\iso&\Map(\ell^*\mcF,\mcG)}$$ coming from the adjointness of $\ell^*$ and $\ell_*$.  We must show that the dotted map exists and is an isomorphism.

Recall that a simplex is in $\Map_\loc$ if all of its vertices are local morphisms.  So it suffices to show that local morphisms $\mcF\to\ell_*\mcG$ are in bijective correspondence with local morphisms $\ell^*\mcF\to\mcG$. This is easily checked by taking stalks: one simply uses the fact that for any point $a\in A$, a basis of open neighborhoods of $a$ are sent under $\ell$ to a basis of open neighborhoods of $\ell(a)$, and vice versa.  

Now we can see that the right-hand sides of Equations \ref{eqn:col2} and 6.8.3 are equivalent, which shows that Equation \ref{eqn:col1} is indeed a weak equivalence, as desired.

\end{proof}

\begin{definition}\label{def:union}

Let $\mcX, \mcY,\mcU$ and $\mcU'$ be as in Lemma \ref{lemma:union}.  We refer to $\mcX\cup\mcY$ as the {\em union} of $\mcX$ and $\mcY$ along the equivalent local $C^\infty$-ringed spaces $\mcU\we\mcU'$.

\end{definition}

\begin{proposition}\label{i fully faithful}

There is a fully faithful functor $\i\taking\Man\to\LRS$.

\end{proposition}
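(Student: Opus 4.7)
The plan is to define $\i$ on objects by sending a smooth manifold $M$ to the local $C^\infty$-ringed space $(M, C^\infty_M)$, where $C^\infty_M(U) = H_U$ is the discrete $C^\infty$-ring of Example \ref{example cinf}, i.e.\ $H_U(\RR^i) = \Hom_\Man(U,\RR^i)$. First I would check that this yields an object of $\LRS$: each $H_U$ is strictly product-preserving and takes discrete values, so it is fibrant in $\sC$; the presheaf $U\mapsto H_U$ satisfies homotopy descent, since at each $\RR^i$ it reduces to the classical sheaf of smooth $\RR^i$-valued functions, and discrete sheaves of sets are fibrant in $\Shv(M,\sC)$; and the stalk at $p\in M$ is the $C^\infty$-ring of germs of smooth functions at $p$, whose underlying $\RR$-algebra is the classical local ring of germs with maximal ideal given by germs vanishing at $p$. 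On morphisms, a smooth map $f\taking M\to N$ produces a continuous $\U(f)$ together with a map of sheaves $f^\sharp\taking f^*C^\infty_N\to C^\infty_M$ by precomposition with $f$; locality on stalks is immediate because precomposition takes germs non-vanishing at $f(p)$ to germs non-vanishing at $p$.

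Next I would establish full faithfulness by showing that the natural map
\[
\Hom_\Man(M,N) \too \Map_\LRS(\i M,\i N) = \coprod_{g\taking M\to N} \Map_\loc(g^*C^\infty_N,\, C^\infty_M)
\]
is a weak equivalence. The source is discrete, so it suffices to show that the target is also a discrete simplicial set and that the map induces a bijection on $\pi_0$. Discreteness of the target follows because $C^\infty_M$ is a discrete sheaf, while $g^*C^\infty_N$ remains discrete under the derived pullback: the underived pullback of a discrete sheaf of sets is again a discrete sheaf of sets, and such sheaves are already fibrant in $\Shv(M,\sC)$, so no nontrivial fibrant replacement is needed. Hence each $\Map_\loc(g^*C^\infty_N,C^\infty_M)$ is a discrete set of morphisms of sheaves of $C^\infty$-rings.

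It then remains to prove that local morphisms $(g,g^\sharp)\taking \i M\to\i N$ of local $C^\infty$-ringed spaces are in bijection with smooth maps $M\to N$. This is the classical statement that $M\mapsto C^\infty(M)$ is fully faithful from $\Man$ into the category of $C^\infty$-rings (see \cite{MR}): given such $(g,g^\sharp)$, the locality condition on the induced stalk maps forces $g^\sharp(h)(p) = h(g(p))$ for every $h\in C^\infty(V,\RR)$ with $g(p)\in V$, because a local morphism of local $\RR$-algebras with residue field $\RR$ must commute with evaluation. Pulling back coordinate charts on $N$ along $g^\sharp$ then reconstructs $g$ and simultaneously exhibits $g$ as smooth, and shows that $g^\sharp$ itself is precisely pullback along $g$. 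Conversely, a smooth map $f$ visibly produces a local morphism, completing the bijection.

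The main obstacle I anticipate is the discreteness check for $g^*C^\infty_N$ and the careful translation between morphisms of sheaves of $C^\infty$-rings and morphisms at the level of germs; both rely on the fact that we are working with discrete, classical $C^\infty$-rings, so the homotopical decorations collapse, and the argument reduces to the known bijection between local $C^\infty$-ring maps of germ rings and smooth maps of pointed manifolds.
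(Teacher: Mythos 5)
Your proof is correct and follows the same strategy as the paper: define $\i(M)=(M,C^\infty_M)$ with $C^\infty_M(U)(\RR^n)=\Hom_\Man(U,\RR^n)$, check it lies in $\LRS$ (fibrancy and stalk-wise locality), and establish full faithfulness by identifying local morphisms $(g,g^\sharp)$ with smooth maps. You supply useful detail that the paper elides --- notably the discreteness of the simplicial mapping spaces (which is what reduces full faithfulness to a set-level bijection) and the residue-field/evaluation argument pinning $g^\sharp$ down as pullback along $g$ --- but this is filling in the same argument rather than taking a different route.
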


\begin{proof}

Given a manifold $M$, let $\i(M)$ denote the local $C^\infty$-ringed space $(M,C^\infty_M)$ whose underlying space is that of $M$, and such that for any open set $U\ss M$, the discrete $C^\infty$-ring $C^\infty_M(U)$ is the functor $\EE\to\Sets$ given by $$C^\infty_M(U)(\RR^n)=\Hom_\Man(U,\RR^n).$$  It is easy to check that $C^\infty_M$ is a fibrant object in $\Shv(M,\sC)$.  It satisfies the locality condition in the sense of Definition \ref{local} because, for every point $p$ in $M$, the smooth real-valued functions which are defined on a neighborhood of $p$ but are not invertible in any neighborhood of $p$ are exactly those functions that vanish at $p$.

We need to show that $\i$ takes morphisms of manifolds to morphisms of local $C^\infty$-ringed spaces.   If $f\taking M\to N$ is a smooth map, $p\in M$ is a point, and one has a smooth map $g\taking N\to \RR^m$ such that $g$ has a root in every sufficiently small open neighborhood of $f(p)$, then $g\circ f$ has a root on every sufficiently small neighborhood of $p$. 

It only remains to show that $\i$ is fully faithful.  It is clear by definition that $\i$ is injective on morphisms, so we must show that any local morphism $\i(M)\to\i(N)$ in $\LRS$ comes from a morphism of manifolds.  Suppose that $(f,f^\sharp)\taking (M,C^\infty_M)\to (N,C^\infty_N)$ is a local morphism; we must show that $f\taking M\to N$ is smooth.  This does not even use the locality condition: for every chart $V\ss N$ with $c\taking V\iso \RR^n$, a smooth map $f^\sharp(c)\taking f^\m1(V)\to\RR^n$ is determined, and these are compatible with open inclusions.

\end{proof}

\begin{notation}\label{notation:abs val}

Suppose $(X,\mcO_X)$ is a $C^\infty$-ringed space.  Recall that $\mcO_X$ is a fibrant sheaf of (simplicial) $C^\infty$-rings on $X$, so that for any open subset $U\ss X$, the object $\mcO_X(U)$ is a (weakly) product preserving functor from $\E$ to $\sSets$.  The value which ``matters most" is $$|\mcO_X(U)|:=\mcO_X(U)(\RR),$$ because every other value $\mcO_X(U)(\RR^n)$ is weakly equivalent to an $n$-fold product of it.  

We similarly denote $|F|:=F(\RR)$ for a $C^\infty$-ring $F$.  Lemmas \ref{we on und sset} and \ref{cinf rings are rings} further demonstrate the importance of $|F|$.  

\end{notation}

The theorem below states that for local $C^\infty$-ringed spaces $\mcX=(X,\mcO_X)$, the sheaf $\mcO_X$ holds the answer to the question ``what are the real valued functions on $\mcX$?"

\begin{theorem}\label{structure theorem 1}

Let $(X,\mcO_X)$ be a local $C^\infty$-ringed space, and let $(\RR,C^\infty_\RR)$ denote (the image under $\i$ of) the manifold $\RR\in\Man$.  There is a homotopy equivalence $$\Map_\LRS((X,\mcO_X),(\RR,C^\infty_\RR))\To{\iso} |\mcO_X(X)|.$$

\end{theorem}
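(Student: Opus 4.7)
The plan is to construct mutually homotopy-inverse maps $\Phi$ and $\Psi$ between the two sides, exploiting the universal role of the identity function $\mathrm{id}_\RR$, viewed as an element of $|C^\infty_\RR(\RR)| = C^\infty(\RR)$. The underlying reason this should work is that $C^\infty(\RR) = H_1$ is the free $C^\infty$-ring on one generator, so evaluation at $\mathrm{id}_\RR$ gives a weak equivalence $\Map_\sC(C^\infty(\RR), A) \we |A|$ for any fibrant $A \in \sC$. The theorem is the sheaf-theoretic refinement of this pointwise fact.

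Define the forward map $\Phi$ by sending a vertex $(f, f^\sharp)$ to the image of $\mathrm{id}_\RR$ under the composite
$$|C^\infty_\RR(\RR)| \longrightarrow |(f^*C^\infty_\RR)(X)| \To{|f^\sharp(X)|} |\mcO_X(X)|,$$
where the first arrow comes from the unit of the $(f^{-1}, f_*)$-adjunction evaluated on the global section $\RR$; this is functorial in simplicial degree. For the inverse $\Psi$, start with $s \in |\mcO_X(X)|$. First I extract an underlying continuous map $\bar s \taking X \to \RR$: at each $p \in X$, the stalk $p^*\mcO_X$ is a local $C^\infty$-ring, and using the $C^\infty$-ring structure one checks that its residue field is canonically $\RR$ (for any $a$ in a local $C^\infty$-ring, there is a unique $\lambda \in \RR$ with $a - \lambda$ in the maximal ideal, because otherwise a smooth bump function around $\lambda$ applied to $a$ would give a contradiction to locality). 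Set $\bar s(p)$ to be the residue class of $s$; continuity holds because $\bar s^{-1}((\lambda-\epsilon, \lambda+\epsilon))$ is precisely the set of points at which a suitable smooth function of $s - \lambda$ becomes a unit in the stalk, which is open by definition of a sheaf of local $C^\infty$-rings. Then build the sheaf morphism $\bar s^* C^\infty_\RR \to \mcO_X$ by globalizing the universal property above: the section $s$ provides exactly the data of where $\mathrm{id}_\RR$ should go, and locality of the resulting map follows from the construction of $\bar s$ via residues.

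The main obstacle is the last step of constructing $\Psi$, i.e., promoting the pointwise universal property $\Map_\sC(C^\infty(\RR), A) \we |A|$ to a simplicially coherent statement at the sheaf level that produces a map to $\Map_\loc(\bar s^* C^\infty_\RR, \mcO_X)$, and assembling these for varying $s$ into a single map to $\coprod_f \Map_\loc(f^*C^\infty_\RR, \mcO_X)$. Once $\Psi$ is built, $\Phi \circ \Psi \simeq \mathrm{id}$ is essentially formal: the composite sends $s$ through the adjunction back to $s$. For $\Psi \circ \Phi \simeq \mathrm{id}$, the key point is that the underlying map $f$ of any local morphism $(f, f^\sharp)$ necessarily agrees with $\overline{f^\sharp(\mathrm{id}_\RR)}$ (because local morphisms preserve maximal ideals on stalks, so $f(p) = \lambda$ forces $f^\sharp$ to send the germ of $\mathrm{id}_\RR - \lambda$ at $\lambda$ into the maximal ideal at $p$), and $f^\sharp$ itself is then determined up to equivalence by its value on $\mathrm{id}_\RR$ via the universal property of $C^\infty(\RR)$.
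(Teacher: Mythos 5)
Your overall strategy is the same as the paper's: construct the easy map (evaluation of the comorphism at $\mathrm{id}_\RR$, which the paper calls $K$) and the hard map ($\Psi$, which the paper calls $L$), where the difficulty lies entirely in using the locality of $\mcO_X$ to first manufacture the underlying continuous map $\bar s\taking X\to\RR$ and then the comorphism. Your residue-field construction of $\bar s$ is a legitimate alternative to the paper's, which instead packages the locality condition as a function ``open covers of $\RR$ $\mapsto$ open covers of $X$'' (Definition \ref{inverse image}, Proposition \ref{equiv locality cond}) and recovers $\bar s$ by Hausdorffness of $\RR$; both routes ultimately rest on the same input, namely that a local $C^\infty$-ring has a unique $\RR$-point (\cite[3.8]{MR}).

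However, you explicitly flag as ``the main obstacle'' the promotion of the pointwise universal property of $C^\infty(\RR)$ to a simplicially coherent sheaf morphism $\bar s^*C^\infty_\RR\to\mcO_X$, and you do not close that gap. The paper's resolution is a short but essential observation: for an open $V\ss\RR$, the restriction $\rho\taking C^\infty_\RR(\RR)\to C^\infty_\RR(V)$ is a \emph{localization} of $C^\infty$-rings, hence an \emph{epimorphism} (this uses \cite[2.2, 2.6]{MR}). Consequently, for an $n$-simplex $g$, the diagram requiring $g$ to factor through $C^\infty_\RR(V)\tensor\Delta^n$ into $|\mcO_X(g^{-1}V)|$ has a dotted lift that exists (by the defining property of $g^{-1}(V)$) and is \emph{unique} (because $\rho\tensor\Delta^n$ is still an epimorphism). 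Uniqueness is exactly what eliminates all coherence concerns: the lifts for varying $V$ and varying $n$ are forced to be compatible, so they assemble automatically into a map of sheaves $G^\flat\taking C^\infty_\RR\tensor\Delta^n\to G_*\mcO_X$, whose adjoint is the desired $G^\sharp$. With that, your sketch of the remaining steps (checking $G^\sharp$ is local by reducing to stalks, and the two composites being homotopic to the identity via the adjunction and the stalkwise-maximal-ideal argument) matches the paper's and is fine, though the paper's locality check is slightly cleaner: it observes that every prime ideal of the stalk $(C^\infty_\RR)_x$ is maximal, so any ring map out of it to a local ring is automatically local.
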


We prove Theorem \ref{structure theorem 1} in Section \ref{proofs} as Theorem \ref{structure theorem}.

As in Algebraic Geometry, there is a ``prime spectrum" functor taking $C^\infty$-rings to local $C^\infty$-ringed spaces.  We will not use this construction in any essential way, so we present it as a remark without proof.

\begin{remark}\label{rmk:spec for cinf}

The global sections functor $\Gamma\taking\LRS\to\sC$, given by $\Gamma(X,\mcO_X):=\mcO_X(X)$, has a right adjoint, denoted Spec (see \cite{Dub}).  Given a $C^\infty$-ring $R$, let us briefly explain $\Spec R=(\Spec R,\mcO)$.  The points of the underlying space of $\Spec R$ are the maximal ideals in $\pi_0R$, and a closed set in the topology on $\Spec R$ is a set of points on which some element of $\pi_0R$ vanishes.  The sheaf $\mcO$ assigns to an open set $U\ss\Spec R$ the $C^\infty$-ring $R[\chi_U^\m1]$, where $\chi_U$ is any element of $R$ that vanishes on the complement of $U$.

The unit of the $(\Gamma,\tn{Spec})$ adjunction is a natural transformation, $\eta_\mcX\taking (X,\mcO_X)\to\Spec\mcO_X(X)$.  If $\mcX$ is a manifold then $\eta_\mcX$ is an equivalence of local $C^\infty$-ringed spaces (see \cite[2.8]{MR}).

\end{remark}

The category of local $C^\infty$-ringed spaces contains a full subcategory, called the category of derived smooth manifolds, which we now define.  Basically, it is the full subcategory of $\LRS$ spanned by the local $C^\infty$-ringed spaces that can be covered by affine derived manifolds, where an affine derived manifold is the vanishing set of a smooth function on affine space.  See also Axiom 5 of Definition \ref{good for doing it}.

\begin{definition}\label{def:dman}

An {\em affine derived manifold} is a pair $\mcX=(X,\mcO_X)\in\LRS$, where $\mcO_X\in\sC$ is fibrant, and which can be obtained as the homotopy limit in a diagram of the form $$\hPull{(X,\mcO_X)}{\RR^0}{\RR^n}{\RR^k.}{}{g}{0}{f}$$  We sometimes refer to an affine derived manifold as {\em a local model for derived manifolds.}  We refer to the map $g\taking\mcX\to\RR^n$ as the {\em  canonical inclusion of the zeroset}.

A {\em derived smooth manifold} (or {\em derived manifold}) is a local $C^\infty$-ringed space $(X,\mcO_X)\in\LRS$, where $\mcO_X\in\sC$ is fibrant, and for which there exists an open covering $\cup_i U_i=X$, such that each $(U_i,\mcO_X|_{U_i})$ is an affine derived manifold.  

We denote by $\dMan$ the full subcategory of $\LRS$ spanned by the derived smooth manifolds, and refer to it as the (simplicial) category of derived manifolds.  A morphism of derived manifolds is called an {\em equivalence of derived manifolds} if it is an equivalence of local $C^\infty$-ringed spaces.

\end{definition}

Manifolds canonically have the structure of derived manifolds; more precisely, we have the following lemma.

\begin{lemma}\label{man in dman}

The functor $\i\taking\Man\to\LRS$ factors through $\dMan$.

\end{lemma}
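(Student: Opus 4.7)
The plan is to verify directly that for any smooth manifold $M$, the local $C^\infty$-ringed space $\i(M) = (M, C^\infty_M)$ satisfies the definition of a derived manifold (Definition \ref{def:dman}), namely that it admits an open cover by affine derived manifolds. The fibrancy of $C^\infty_M$ in $\Shv(M,\sC)$ was already noted in the proof of Proposition \ref{i fully faithful}, so only the local-model condition needs checking.

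First I would cover $M$ by open charts $U_\alpha \ss M$, each diffeomorphic to $\RR^n$ where $n = \dim M$; this is possible since every point of $M$ admits a chart whose image is an open ball in $\RR^n$, and every open ball is diffeomorphic to $\RR^n$ via a standard radial diffeomorphism. Since $\i$ is fully faithful (Proposition \ref{i fully faithful}), each such chart diffeomorphism $\phi_\alpha \taking U_\alpha \To{\iso} \RR^n$ induces an equivalence of local $C^\infty$-ringed spaces $(U_\alpha, C^\infty_M|_{U_\alpha}) = \i(U_\alpha) \we \i(\RR^n) = (\RR^n, C^\infty_{\RR^n})$. Thus it suffices to exhibit $\i(\RR^n)$ as an affine derived manifold.

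For this, I would take $k=0$ in Definition \ref{def:dman} and consider the square
$$\hPull{(\RR^n, C^\infty_{\RR^n})}{\RR^0}{\RR^n}{\RR^0,}{}{\id}{\id}{\tn{!}}$$
where the bottom map $f \taking \RR^n \to \RR^0$ is the unique one. Because $\RR^0$ is a terminal object in $\LRS$ (its structure sheaf $C^\infty(\RR^0)$ is the $C^\infty$-ring represented by $\RR^0$, which is initial in $\sC$, so mapping into $\RR^0$ is unique in the adjoint sense), the homotopy pullback of $\RR^n \to \RR^0 \from \RR^0$ exists and is canonically equivalent to $\RR^n$ itself. Hence $\i(\RR^n)$ fits the pattern required by the definition of an affine derived manifold with $n = n$ and $k = 0$.

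Combining these steps, the open cover $\{U_\alpha\}$ of $M$ gives a cover of $\i(M)$ by open subobjects, each of which is equivalent (hence identifiable with) an affine derived manifold, and so $\i(M) \in \dMan$. There is no serious obstacle: the only thing to be careful about is the observation that $\RR^0$ is terminal in $\LRS$, which is needed to identify the relevant homotopy pullback with $\RR^n$ on the nose rather than with $\RR^n \times \Omega_0 \RR^k$ for $k > 0$; this is why choosing $k = 0$ is essential.
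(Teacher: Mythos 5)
Your proof is correct and takes the same approach as the paper's, which is simply the terse assertion that Euclidean space $(\RR^n, C^\infty_{\RR^n})$ is an affine derived manifold (from which the cover by charts is implicit). You have spelled out the two details the paper leaves to the reader: that $M$ can be covered by charts diffeomorphic to $\RR^n$, and that $\RR^n$ itself is an affine derived manifold via the degenerate choice $k=0$, which works because $\i(\RR^0)$ is homotopy terminal in $\LRS$ (a fact the paper records in the proof of Proposition \ref{fin lim in lrs}). One small overstatement: choosing $k=0$ is the cleanest route but is not strictly ``essential'' --- any $f\taking\RR^n\to\RR^k$ transverse to the origin would exhibit $\RR^{n-k}$ as affine, and $\RR^n$ could then be handled by adjusting $n$; the $k=0$ choice just avoids having to invoke transversality or Proposition \ref{transverse intersections}.
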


\begin{proof}

Euclidean space $(\RR^n,C^\infty_{\RR^n})$ is a principle derived manifold.

\end{proof}

An imbedding of derived manifolds is a map $g\taking\mcX\to\mcY$ that is locally a zeroset.  Precisely, the definition is given by Definition \ref{def of imbed}, with the simplicial category $\dM$ replaced by $\dMan$.  Note that if $g$ is an imbedding, then the morphism $g^\sharp\taking g^*\mcO_Y\to\mcO_X$ is surjective on $\pi_0$, because it is a pushout of such a morphism.

\section{Cotangent Complexes}\label{cot}

The idea behind cotangent complexes is as follows.  Given a manifold $M$, the cotangent bundle $T^*(M)$ is a vector bundle on $M$, which is dual to the tangent bundle.  A smooth map $f\taking M\to N$ induces a map on vector bundles $$T^*(f)\taking T^*(N)\to T^*(M).$$  Its cokernel, $\coker(T^*(f))$, is not necessarily a vector bundle, and is thus generally not discussed in the basic theory of smooth manifolds.  However, it is a sheaf of modules on $M$ and does have geometric meaning: its sections measure tangent vectors in the fibers of the map $f$.   Interestingly, if $f\taking M\to N$ is an embedding, then $T^*(f)$ is a surjection and the cokernel is zero.  In this case its {\em kernel}, $\ker(T^*(f))$, has meaning instead -- it is dual to the normal bundle to the imbedding $f$.

In general the cotangent complex of $f\taking M\to N$ is a sheaf on $M$ which encodes all of the above information (and more) about $f$.  It is in some sense the ``universal linearization" of $f$.  

The construction of the cotangent complex $L_f$ associated to any morphism $f\taking A\to B$ of commutative rings was worked on by several people, including Andr{\'e} \cite{And}, Quillen \cite{Qui-CCR}, Lichtenbaum and Schlessinger \cite{LS}, and Illusie \cite{Ill}.  Later, Schwede introduced cotangent complexes in more generality \cite{Sch}, by introducing spectra in model categories and showing that $L_f$ emerges in a canonical way when one studies the stabilization of the model category of $A$-algebras over $B$.

The cotangent complex for a morphism of $C^\infty$-ringed spaces can be obtained using the process given in Schwede's paper \cite{Sch}.  Let us call this the $C^\infty$-cotangent complex.  It has all of the usual formal properties of cotangent complexes (analogous to those given in Theorem \ref{properties of cot}).  Unfortunately, to adequately present this notion requires a good bit of setup, namely the construction of spectra in the model category of sheaves of lax simplicial $C^\infty$-rings, following \cite{Sch}.  Moreover, the $C^\infty$-cotangent complex is unfamiliar and quite technical, and it is difficult to compute with.  

Underlying a morphism of $C^\infty$-ringed spaces is a morphism of ringed spaces.  It too has an associated cotangent complex, which we call the ring-theoretic cotangent complex.  Clearly, the $C^\infty$-cotangent complex is more canonical than the ring-theoretic cotangent complex in the setting of $C^\infty$-ringed spaces.    However, for the reasons given in the above paragraph, we opt to use the ring-theoretic version instead.  This version does not have quite as many useful formal properties as does the $C^\infty$-version (Property (4) is weakened), but it has all the properties we will need.

In \cite{Ill}, the cotangent complex for a morphism $A\to B$ of simplicial commutative rings is a simplicial $B$-module.  The model category of simplicial modules over $B$ is equivalent to the model category of non-negatively graded chain complexes over $B$, by the Dold-Kan correspondence.  We use the latter approach for simplicity.

Let us begin with the properties that we will use about ring-theoretic cotangent complexes.  

\begin{theorem}\label{properties of cot}

Let $X$ be a topological space.  Given a morphism $f\taking R\to S$ of sheaves of simplicial commutative rings on $X$, there exists a complex of sheaves of $S$-modules, denoted $L_f$ or $L_{S/R}$, called the {\em cotangent complex associated to $f$}, with the following properties.  \begin{enumerate}\item Let $\Omega^1_f=H_0L_f$ be the $0$th homology group.  Then, as a sheaf on $X$, one has that $\Omega^1_f$ is the usual $S$-module of K\"{a}hler differentials of $S$ over $R$.\item The cotangent complex is functorially related to $f$ in the sense that a morphism of arrows $i\taking f\to f'$, i.e. a commutative diagram of sheaves in $\sR$ $$\xymatrix{R\ar[r]\ar[d]_{f}&R'\ar[d]^{f'}\\ S\ar[r]&S',}$$ induces a morphism of $S$-modules $$L_i\taking L_{f}\to L_{f'}.$$\item If $i\taking f\to f'$ is a weak equivalence (i.e. if both the top and bottom maps in the above square are weak equivalences of simplicial commutative rings), then $L_i\taking L_{f}\to L_{f'}$ is a weak equivalence of $S$-modules, and its adjoint $L_{f}\tensor_SS'\to L_{f'}$ is a weak equivalence of $S'$-modules.\item If the diagram from (2) is a homotopy pushout of sheaves of simplicial commutative rings, then the induced morphsim $L_i\taking L_f\to L_{f'}$ is a weak equivalence of modules.\item To a pair of composable arrows $R\To{f}S\To{g}T$, one can functorially assign an exact triangle in the homotopy category of $T$-modules, $$L_{S/R}\tensor_ST\to L_{T/R}\to L_{T/S}\to L_{S/R}\tensor_ST[-1].$$\end{enumerate}

\end{theorem}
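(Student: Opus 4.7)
The plan is to construct $L_{S/R}$ via the classical simplicial resolution recipe of Quillen--Illusie, then verify the five properties one at a time by reducing to well-known statements about simplicial commutative rings and then sheafifying. Concretely, I would first form a functorial cofibrant replacement $\widetilde{S} \xrightarrow{\simeq} S$ in the category of simplicial $R$-algebras (for instance, the free simplicial $R$-algebra resolution, built degree-wise from the polynomial-algebra monad over $R$), take the simplicial $\widetilde{S}$-module $\Omega^1_{\widetilde{S}/R}$ of K\"ahler differentials, base change to $S$ to obtain a simplicial $S$-module $\Omega^1_{\widetilde{S}/R} \tensor_{\widetilde{S}} S$, and then apply Dold--Kan to obtain a non-negatively graded chain complex of $S$-modules, which we declare to be $L_{S/R}$. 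Everything takes place sheaf-wise on $X$; since the construction is functorial in the ring $f \taking R \to S$, sheafification is automatic.

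For the individual properties: property (1) follows because $\pi_0$ of a cofibrant simplicial replacement is $S$ itself, and $\pi_0$ of the module of K\"ahler differentials is the usual K\"ahler differentials module. Property (2) is immediate from the functoriality of the cofibrant replacement, provided one chooses it functorially; one obtains a comparison map $\widetilde{S} \to \widetilde{S}{}'$ covering $S \to S'$, inducing a map on $\Omega^1$ and hence on $L$. For property (3), the key point is that the functor assigning K\"ahler differentials is a left Quillen functor from cofibrant simplicial $R$-algebras to simplicial $\widetilde{S}$-modules, hence preserves weak equivalences between cofibrant objects; combined with Ken Brown's lemma this gives both the equivalence $L_f \to L_{f'}$ and the base-change statement $L_f \tensor_S S' \to L_{f'}$. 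Property (4) is more delicate: one reduces to the analogous statement for simplicial commutative rings, where the cotangent complex is invariant under homotopy cobase change because K\"ahler differentials themselves satisfy $\Omega^1_{S'/R'} \iso \Omega^1_{S/R} \tensor_S S'$ in the strict setting and the pushout of cofibrant resolutions is again cofibrant. Property (5) is the Jacobi--Zariski (transitivity) triangle; its construction proceeds by choosing compatible cofibrant resolutions $\widetilde{S} \to \widetilde{T}$ over $\widetilde{R}$ and using the short exact sequence of K\"ahler differentials $\Omega^1_{\widetilde{S}/\widetilde{R}} \tensor_{\widetilde{S}} \widetilde{T} \to \Omega^1_{\widetilde{T}/\widetilde{R}} \to \Omega^1_{\widetilde{T}/\widetilde{S}} \to 0$, which lifts to a (homotopy) cofiber sequence of simplicial $T$-modules because the middle term is cofibrant over the outer ones.

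The bulk of the work is bookkeeping rather than new mathematics; the one genuine obstacle is the sheaf-theoretic setting. I would handle it by noting that the model structure on $\mathrm{Shv}(X, s\mathbb{R}\text{-alg})$ is defined via localization and all the constructions (free resolution, K\"ahler differentials, base change) commute with restriction to open sets and with stalks. Thus the proofs of (1)--(5) reduce first to the corresponding statements for presheaves, and then pointwise to the known statements for simplicial commutative rings due to Quillen \cite{Qui-CCR} and Illusie \cite{Ill}. In particular, (3) and (4) reduce via Proposition \ref{we on und alg}-type arguments to stalkwise checks, using that weak equivalences in $\mathrm{Shv}(X, s\mathbb{R})$ between fibrant objects are detected on stalks.

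Since the theorem is asserted largely as a reference to well-documented properties, a clean presentation would cite Illusie and Quillen for the ring-theoretic statements and then spend most of the actual prose verifying that the sheafification is harmless. I would not attempt to redo Illusie's construction here; rather, I would set up the functorial simplicial resolution, define $L_{S/R}$ precisely, and cite \cite{Ill} and \cite{Qui-CCR} for each of the five properties, noting the stalkwise reduction where appropriate.
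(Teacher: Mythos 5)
Your proposal is correct, and it lands on essentially the same strategy the paper uses: the paper's entire proof is a one-line citation to Illusie, attributing the five properties to \cite[Ch.~2, Prop.~1.2.4.2, St.~1.2.3, Prop.~1.2.6.2, Prop.~2.2.1, Prop.~2.1.2]{Ill} respectively. You reconstruct the standard cofibrant-replacement/K\"ahler-differentials/Dold--Kan construction in more detail and then, as you say yourself, would end by citing Illusie and Quillen for the ring-theoretic facts.

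The one place where you do more work than necessary is the stalkwise/presheaf reduction you describe in the penultimate paragraph. Chapter~2 of \cite{Ill} is formulated from the outset for morphisms of ringed topoi, not just for discrete simplicial commutative rings, so the sheaf-theoretic versions of all five statements are already in the cited source; no reduction to stalks or sheafification argument is required when following that development. Your stalkwise argument is nevertheless a valid alternative, provided one uses (as you note) that the construction is functorial and that K\"ahler differentials commute with filtered colimits, hence with stalks, and that weak equivalences and homotopy pushouts of sheaves are preserved by the stalk functors. Two minor points of precision worth flagging: in (3) the phrase ``left Quillen functor from cofibrant simplicial $R$-algebras to simplicial $\widetilde{S}$-modules'' conflates the module categories over varying base rings, and it would be cleaner to phrase this as invariance of the total left derived functor; and in (4) the map that becomes a weak equivalence is really the base-changed map $L_f \tensor_S S' \to L_{f'}$, consistent with your appeal to $\Omega^1_{S'/R'} \iso \Omega^1_{S/R}\tensor_S S'$, even though the statement in the theorem is written loosely as $L_f \to L_{f'}$.
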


\begin{proof}

The above five properties are proved in Chapter 2 of \cite{Ill}, as Proposition 1.2.4.2, Statement 1.2.3, Proposition 1.2.6.2, Proposition 2.2.1, and Proposition 2.1.2, respectively.

\end{proof}

All of the above properties of the cotangent complex for morphisms of sheaves have contravariant corollaries for morphisms of ringed spaces, and almost all of them have contravariant corollaries for maps of $C^\infty$-ringed spaces.  For example, a pair of composable arrows of $C^\infty$-ringed spaces $\mcX\From{f}\mcY\From{g}\mcZ$ induces an exact triangle in the homotopy category of $\mcO_Z$-modules, $$g^*L_{\mcY/\mcX}\to L_{\mcZ/\mcX}\to L_{\mcZ/\mcY}\to g^*L_{\mcY/\mcX}[-1].$$  It is in this sense that Properties (1), (2), (3), and (5) have contravariant corollaries.

The exception is Property (4); one asks, does a homotopy pullback in the category of $C^\infty$-ringed spaces induce a weak equivalence of cotangent complexes?  In general, the answer is {\em no} because the ringed space underlying a homotopy pullback of $C^\infty$-ringed spaces is not the homotopy pullback of the underlying ringed spaces.  In other words the ``underlying simplicial $\RR$-algebra" functor $U\taking\sC\to\sR$ does not preserve homotopy colimits.  However, there are certain types of homotopy colimits that $U$ does preserve.  In particular, if $\mcX\to\mcZ\from\mcY$ is a diagram of $C^\infty$-ringed spaces in which one of the two maps is an {\em imbedding} of derived manifolds, then taking underlying rings {\em does commute} with taking the homotopy colimit (this follows from Corollary \ref{com fun for imb}), and the cotangent complexes will satisfy Property (4) of Theorem \ref{properties of cot}.  This is the only case in which cotangent complexes will be necessary for our work.  Thus, we restrict our attention to the ring-theoretic version rather than the $C^\infty$-version of cotangent complexes.  

Here is our analogue of Theorem \ref{properties of cot}.

\begin{corollary}\label{prop of our cot}

Given a local $C^\infty$-ringed space $\mcX=(X,\mcO_X)$, let $U(\mcO_X)$ denote the underlying sheaf of simplicial commutative rings on $X$.  Given a morphism of local $C^\infty$-ringed spaces $f\taking\mcX\to\mcY$, there exists a complex of sheaves of $U(\mcO_X)$-modules on $X$, denoted $L_f$ or $L_{\mcX/\mcY}$, called the {\em ring-theoretic cotangent complex associated to $f$} (or just the {\em cotangent complex for $f$}), with the following properties.  \begin{enumerate}\item Let $\Omega^1_f=H_0L_f$ be the $0$th homology group.  Then $\Omega^1_f$ is the usual $\mcO_X$-module of K\"{a}hler differentials of $\mcX$ over $\mcY$.\item The cotangent complex is contravariantly related to $f$ in the sense that a morphism of arrows $i=(i_0,i_1)\taking f\to f'$, i.e. a commutative diagram in $\LRS$ $$\xymatrix{\mcY\ar[r]^{i_1}&\mcY'\\\mcX\ar[u]^f\ar[r]_{i_0}&\mcX'\ar[u]_{f'},}$$ induces a morphism of $U(\mcO_X)$-modules $$L_i\taking i_0^*L_{f'}\to L_f.$$  \item If $i\taking f\to f'$ is an equivalence (i.e. it induces two equivalences of sheaves), then $L_i\taking i_0^*L_{f'}\to L_f$ is a weak equivalence of $U(\mcO_X)$-modules.\item If the diagram from (2) is a homotopy pullback in $\LRS$ and either $f'$ or $i_1$ is an immersion, then the induced morphism $L_i\taking L_{f'}\to L_f$ is a weak equivalence of $U(\mcO_X)$-modules.\item To a pair of composable arrows $\mcX\To{f}\mcY\To{g}\mcZ$, one can functorially assign an exact triangle in the homotopy category of $U(\mcO_X)$-modules, $$f^*L_{\mcY/\mcZ}\to L_{\mcX/\mcZ}\to L_{\mcX/\mcY}\to f^*L_{\mcY/\mcZ}[-1].$$ \end{enumerate}

\end{corollary}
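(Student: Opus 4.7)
The plan is to \emph{define} the cotangent complex of $f\taking\mcX\to\mcY$ to be the sheaf-theoretic cotangent complex of Theorem \ref{properties of cot} applied to the underlying morphism of sheaves of simplicial commutative rings. Concretely, given $f=(f,f^\sharp)$, the comorphism $f^\sharp\taking f^*\mcO_Y\to\mcO_X$ passes under the underlying $\RR$-algebra functor $U\taking\sC\to\sR$ (extended objectwise to sheaves) to a morphism of sheaves of simplicial commutative rings on $X$; set $L_f:=L_{U(\mcO_X)/U(f^*\mcO_Y)}$, viewed as a complex of $U(\mcO_X)$-modules via Dold-Kan. The first Kähler differential $\Omega^1_f=H_0(L_f)$ is then the usual one, giving Property~(1).

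For Property~(2), a commutative square $i=(i_0,i_1)\taking f\to f'$ in $\LRS$ yields, after applying $U$ to the comorphisms, a commutative square of sheaves of simplicial rings on $X$; the functoriality clause of Theorem \ref{properties of cot}(2) produces an $i_0^{-1}U(\mcO_{X'})$-linear map $i_0^{-1}L_{f'}\to L_f$, whose adjoint over the canonical $i_0^{-1}U(\mcO_{X'})\to U(\mcO_X)$ is the desired map $i_0^*L_{f'}\to L_f$. Property~(3) follows immediately from Property~(2) combined with Corollary~\ref{we on und alg}: an equivalence in $\LRS$ is by definition a homeomorphism together with a weak equivalence of structure sheaves of $C^\infty$-rings, and $U$ preserves and reflects weak equivalences between fibrant objects; so the two vertical maps of the square, after applying $U$, become weak equivalences of sheaves of simplicial rings, and we apply Theorem \ref{properties of cot}(3) stalkwise. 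Property~(5) is a direct transfer: for $\mcX\To{f}\mcY\To{g}\mcZ$, apply $U$ to the induced sequence of sheaves on $X$, $f^*(gf)^*\mcO_Z\to f^*\mcO_Y\to\mcO_X$, and invoke Theorem \ref{properties of cot}(5).

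The only nontrivial content is Property~(4). The issue, flagged in the paragraph immediately preceding the statement, is that $U\taking\sC\to\sR$ does \emph{not} in general commute with homotopy colimits, so a homotopy pullback square of $C^\infty$-ringed spaces is not typically sent by $U$ to a homotopy pushout square of sheaves of simplicial $\RR$-algebras---which is exactly the hypothesis that Theorem \ref{properties of cot}(4) requires. The hypothesis that one of $f'$ or $i_1$ be an imbedding is precisely the condition under which this failure disappears. Granting the forward-referenced Corollary \ref{com fun for imb} (``$U$ commutes with homotopy pushouts along a map underlying an imbedding''), the image under $U$ of the given homotopy pullback square in $\LRS$ is a homotopy pushout of sheaves of simplicial commutative rings on $X$, and Theorem \ref{properties of cot}(4) delivers the weak equivalence of cotangent complexes.

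The main obstacle, then, is really just Property~(4): everything else is a formal transfer along $U$, but Property~(4) requires one to know that the underlying-algebra functor $U$ plays well with those homotopy pullbacks in $\LRS$ that involve an imbedding. This rests on the structural fact (Corollary \ref{com fun for imb}) that an imbedding $g\taking\mcX\to\mcY$ has a comorphism $g^\sharp\taking g^*\mcO_Y\to\mcO_X$ whose construction is local and presents $\mcO_X$ as the homotopy pushout, in $\sC$, of $g^*\mcO_Y$ along a map from a free $C^\infty$-ring on finitely many generators (locally, from $C^\infty(\RR^k)$), together with the fact that such finitely-generated free-algebra pushouts are preserved by $U$ because freeness and the pushout can be computed objectwise and the relevant universal property matches on the algebraic-theory side. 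Once that lemma is in hand, the present corollary reduces entirely to the already-established Theorem \ref{properties of cot}.
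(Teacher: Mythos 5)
Your proposal is correct and takes essentially the same approach as the paper: define $L_f$ as the ring-theoretic cotangent complex of $U(f^\sharp)\taking U(f^*\mcO_Y)\to U(\mcO_X)$, transfer Properties (1), (2), (3), (5) formally via the compatibility of inverse image with cotangent complexes and Corollary~\ref{we on und alg}, and reduce Property~(4) to the forward-referenced Corollary~\ref{com fun for imb}. The only difference is that you give more explanatory discussion (particularly about why the imbedding hypothesis is needed in Property~(4)), whereas the paper's proof is terser.
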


\begin{proof}

The map $f\taking\mcX\to\mcY$ induces $U(f^\sharp)\taking U(f^*\mcO_Y)\to U(\mcO_X)$, and we set $L_f:=L_{U(f^\sharp)}$, which is a sheaf of $U(\mcO_X)$-modules. 

Let $h\taking X\to Y$ be a morphism of topological spaces, and let $\mcF\to\mcG$ be a map of sheaves of simplicial commutative rings on $Y$.  By \cite[1.2.3.5]{Ill}, since inductive limits commute with the cotangent complex functor for simplicial commutative rings, one has an isomorphism $$h^*(L_{\mcG/\mcF})\to L_{h^*\mcG/h^*\mcF}.$$  Properties (1), (2), and (5) now follow from Theorem \ref{properties of cot}.

Sheaves on local $C^\infty$-ringed spaces are assumed cofibrant-fibrant, and weak equivalences between fibrant objects are preserved by $U$ (Corollary \ref{we on und alg}).  Property (3) follows from Theorem \ref{properties of cot}.

Finally, if $f'$ or $i_1$ is an immersion, then taking homotopy pullback commutes with taking underlying ringed spaces, by Corollary \ref{com fun for imb}, and so Property (4) also follows from Theorem \ref{properties of cot}.

\end{proof}

If $\mcX$ is a local $C^\infty$-ringed space, we write $L_\mcX$ to denote the cotangent complex associated to the unique map $t\taking\mcX\to\RR^0$.  It is called the {\em absolute cotangent complex} associated to $\mcX$.

\begin{corollary}\label{cot for euc}

Let $t\taking\RR^n\to\RR^0$ be the unique map.  Then the cotangent complex $L_t$ is 0-truncated, and its $0$th homology group $$\Omega^1_t\iso C^\infty_{\RR^n}\langle dx_1,\ldots,dx_n\rangle$$ is a free $C^\infty_{\RR^n}$ module of rank $n$.  

Let $p\taking\RR^0\to\RR^n$ be any point.  Then the cotangent complex $L_p$ on $\RR^0$ has homology concentrated in degree 1, and $H_1(L_p)$ is an $n$-dimensional real vector space.

\end{corollary}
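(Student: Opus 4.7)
The plan is to compute $L_t$ directly and then to deduce the statement about $L_p$ from the exact triangle of Property~(5) of Corollary~\ref{prop of our cot} applied to the composable pair $\RR^0 \xrightarrow{p} \RR^n \xrightarrow{t} \RR^0$.

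For the absolute cotangent complex $L_t$ of $\RR^n$, I observe that the underlying sheaf of simplicial commutative rings is the discrete sheaf $C^\infty_{\RR^n}$, and we are computing its cotangent complex over the constant sheaf $\RR$. The module of K\"{a}hler differentials is freely generated by the symbols $dx_1,\ldots,dx_n$, so Property~(1) of Corollary~\ref{prop of our cot} identifies $H_0(L_t)$ with $C^\infty_{\RR^n}\langle dx_1,\ldots,dx_n\rangle$, as claimed. For the vanishing of higher homology, I would exhibit a free simplicial resolution of $C^\infty(\RR^n)$ as an $\RR$-algebra whose higher normalized differential modules are acyclic, so that the homology of $L_t$ is concentrated in degree zero.

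For $L_p$, the composite $t\circ p\taking\RR^0\to\RR^0$ is the identity; it induces a weak equivalence on structure sheaves and hence has trivial cotangent complex. The distinguished triangle
\[
p^*L_t \to L_{t\circ p} \to L_p \to p^*L_t[-1]
\]
supplied by Property~(5) thus collapses to an equivalence exhibiting $L_p$ as the suspension of $p^*L_t$. By Part~1 together with Property~(3), $p^*L_t$ is the pullback of the free rank-$n$ module $C^\infty_{\RR^n}\langle dx_1,\ldots,dx_n\rangle$ along $p\taking\RR^0\to\RR^n$, which is the $n$-dimensional real vector space $\RR\langle dx_1,\ldots,dx_n\rangle$ over $C^\infty(\RR^0)=\RR$. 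The suspension places this content in homological degree~$1$, giving $H_1(L_p)\iso\RR^n$ and zero elsewhere.

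The main obstacle I anticipate is establishing the 0-truncation of $L_t$. While the statement is classical for smooth finitely generated $\RR$-algebras, $C^\infty(\RR^n)$ is transcendental and not of finite type, so the naive Illusie cotangent complex could \emph{a priori} carry higher homology. An expedient is to write $C^\infty(\RR^n)$ as a filtered colimit of finitely generated smooth $\RR$-subalgebras (exploiting the $C^\infty$-structure and partitions of unity) for which classical smoothness guarantees the vanishing, and then to invoke compatibility of the cotangent complex with such filtered colimits. Once this is secured, the second half of the statement is a purely formal consequence of the exact triangle together with the fact that the identity map has vanishing cotangent complex.
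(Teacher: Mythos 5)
Your argument for $L_p$ --- applying Property (5) of Corollary \ref{prop of our cot} to the composable pair $\RR^0\To{p}\RR^n\To{t}\RR^0$, using $L_{t\circ p}\we 0$ to identify $L_p$ with a shift of $p^*L_t$, and then reading off $H_1$ via Property (3) together with the first part --- is precisely the paper's proof of the second statement.

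For the first statement you are right that Property (1) by itself only identifies $H_0(L_t)$ and says nothing about the vanishing of higher homology; the paper simply cites Property (1) and passes over this point, so you are being more careful than the paper is. However, the filtered-colimit device you propose does not close the gap, and is actually in tension with the rank-$n$ claim you are trying to prove. It is not clear that $C^\infty(\RR^n)$ can be written as a filtered colimit of finitely generated \emph{smooth} $\RR$-subalgebras: an arbitrary finitely generated $\RR$-subalgebra of $C^\infty(\RR^n)$ can be singular (its generators may satisfy polynomial relations cutting out a singular affine variety), and there is no evident way to enlarge such a subalgebra to a smooth one within $C^\infty(\RR^n)$. Even granting such a presentation, the projective modules $\Omega^1_{A_i/\RR}\otimes_{A_i}C^\infty(\RR^n)$ would have ranks growing without bound as generators accumulate, so the colimit could not be free of rank $n$; your route to $0$-truncation would simultaneously contradict the asserted description of $H_0(L_t)$. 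This tension reflects a genuine subtlety that the paper's two-sentence proof does not acknowledge: the ring-theoretic K\"{a}hler differentials $\Omega^1_{C^\infty(\RR^n)/\RR}$, which is what Property (1) of Corollary \ref{prop of our cot} literally produces, are strictly larger than the rank-$n$ module of smooth one-forms, because the algebraic Leibniz rule does not enforce the chain rule (for example $d(e^{x_1})-e^{x_1}\,dx_1$ is a nonzero class in the algebraic $\Omega^1$). The free module $C^\infty_{\RR^n}\langle dx_1,\ldots,dx_n\rangle$ is only a quotient. You should discard the filtered-colimit strategy; any correct argument for this first statement has to engage the $C^\infty$-structure directly (in the spirit of Lemma \ref{hadamard}) rather than reduce formally to finite-type smooth $\RR$-algebras.
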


\begin{proof}

The first statement follows from Property (1) of Theorem \ref{properties of cot}.  The second statement follows from the exact triangle, given by Property (5), for the composable arrows $\RR^0\To{p}\RR^n\To{t}\RR^0$.

\end{proof}

Let $\mcX=(X,\mcO_X)$ be a derived manifold and $L_\mcX$ its cotangent complex.  For any point $x\in X$, let $L_{\mcX,x}$ denote the stalk of $L_\mcX$ at $x$; it is a chain complex over the field $\RR$ so its homology groups are vector spaces.  Let $e(x)$ denote the alternating sum of the dimensions of these vector spaces.  As defined, $e\taking X\to\ZZ$ is a just function between sets.

\begin{corollary}\label{euler of der}

Let $\mcX=(X,\mcO_X)$ be a derived manifold, $L_\mcX$ its cotangent complex, and $e\taking X\to\ZZ$ the pointwise Euler characteristic of $L_\mcX$ defined above.  Then $e$ is continuous (i.e. locally constant), and for all $i\geq 2$ we have $H_i(L_\mcX)=0$. 

\end{corollary}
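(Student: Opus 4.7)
The plan is to reduce both assertions to a local computation on an affine derived manifold, and there to use the cotangent-complex machinery of Corollary~\ref{prop of our cot} together with Corollary~\ref{cot for euc} to exhibit $L_\mcX$ as a two-term complex of free $\mcO_X$-modules. Since both claims concern the sheaf $L_\mcX$ on $X$, they are local on $X$; so by Definition~\ref{def:dman} I may assume $\mcX$ is an affine derived manifold realized as the homotopy pullback of a diagram $\RR^n\To{f}\RR^k\From{0}\RR^0$ in $\LRS$. Write $g\taking\mcX\to\RR^0$ and $g'\taking\mcX\to\RR^n$ for the two projections out of the pullback corner.

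The key observation is that the point inclusion $0\taking\RR^0\to\RR^k$ is a model imbedding in the sense of Definition~\ref{def of imbed}, being the canonical inclusion of the zeroset of $\id_{\RR^k}\taking\RR^k\to\RR^k$. Consequently Property~(4) of Corollary~\ref{prop of our cot} applies to our pullback square and yields a weak equivalence
$$(g')^* L_{\RR^n/\RR^k} \we L_{\mcX/\RR^0} = L_\mcX.$$
To compute $L_{\RR^n/\RR^k}$, I would apply Property~(5) to the composition $\RR^n\To{f}\RR^k\to\RR^0$, obtaining the exact triangle
$$f^* L_{\RR^k/\RR^0}\to L_{\RR^n/\RR^0}\to L_{\RR^n/\RR^k}\to f^* L_{\RR^k/\RR^0}[-1].$$
By Corollary~\ref{cot for euc} the first two terms are $0$-truncated and free of ranks $k$ and $n$ respectively, so the long exact sequence in homology immediately gives $H_i(L_{\RR^n/\RR^k})=0$ for $i\ge 2$ and represents $L_{\RR^n/\RR^k}$ as a two-term complex of free $C^\infty_{\RR^n}$-modules of ranks $k$ (in degree $1$) and $n$ (in degree $0$), with the Jacobian of $f$ as differential.

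Pulling back along $g'$, $L_\mcX$ is thus quasi-isomorphic to a two-term complex $\mcO_X^k\to\mcO_X^n$ of free $\mcO_X$-modules. This immediately yields $H_i(L_\mcX)=0$ for all $i\ge 2$. For the Euler characteristic, at any point $x\in X$ lying in this local model the stalk $L_{\mcX,x}$ is represented by a two-term complex of free $\mcO_{X,x}$-modules of ranks $k$ and $n$; reducing modulo the maximal ideal (so as to land in $\RR$-vector spaces) and invoking the Euler--Poincar\'e principle yields $e(x)=n-k$, which is constant on this affine piece and hence locally constant on $X$. The only step that could pose a real obstacle is verifying that Property~(4) of Corollary~\ref{prop of our cot} is actually available --- i.e., that the zero section $\RR^0\to\RR^k$ qualifies as an imbedding --- but this is immediate from the definitions, so the rest of the argument is purely formal manipulation of the cotangent triangles.
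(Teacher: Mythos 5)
Your proposal follows essentially the same route as the paper's proof: reduce to an affine local model, observe that the zero section $\RR^0\to\RR^k$ is an imbedding so that Property~(4) of Corollary~\ref{prop of our cot} identifies $L_\mcX$ with (the pullback of) $L_{\RR^n/\RR^k}$, and then read off both the vanishing of $H_i$ for $i\ge 2$ and the constancy of the Euler characteristic from the exact triangle for $\RR^n\To{f}\RR^k\to\RR^0$ together with Corollary~\ref{cot for euc}. The only cosmetic differences are that you explicitly verify the zero section is a model imbedding and name the resulting two-term free resolution with its Jacobian differential, neither of which changes the argument.
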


\begin{proof}

We can assume that $\mcX$ is an affine derived manifold; i.e. that there is a homotopy limit square of the form $$\xymatrix{\mcX\ar[r]^t\ar[d]_i\ulhlimit&\RR^0\ar[d]^0\\\RR^n\ar[r]_f&\RR^k.}$$  By Property (4), the map $L_i\taking L_t\to L_f$ is a weak equivalence of sheaves.   Recall that $L_\mcX$ is shorthand for $L_t$, so it suffices to show that the Euler characteristic of $L_f$ is constant on $\mcX$.

The composable pair of morphisms $\RR^n\To{f}\RR^k\to\RR^0$ induces an exact triangle $$f^*L_{\RR^k}\to L_{\RR^n}\to L_f\to f^*L_{\RR^k}[-1].$$  By Corollary \ref{cot for euc}, this reduces to an exact sequence of real vector spaces \begin{align}\label{eqn:euler}0\to H_1(L_f)\to \RR^k\to \RR^n\to H_0(L_f)\to 0.\end{align}  Note also that for all $i\geq 2$ we have $H_i(L_f)=0$, proving the second assertion.  The first assertion follows from the exactness of (\ref{eqn:euler}), because $$\rank(H_0(L_f))-\rank(H_1(L_f))=n-k$$ at all points in $\mcX$.

\end{proof}

\begin{definition}\label{def:dimension}

Let $\mcX=(X,\mcO_X)$ be a derived manifold, and $e\taking X\to\ZZ$ the function defined in Corollary \ref{euler of der}.  For any point $x\in X$, the value $e(x)\in\ZZ$ is called {\em the virtual dimension}, or just {\em the dimension}, of $\mcX$ at $x$, and denoted $\dim_x\mcX$.

\end{definition}

\begin{corollary}\label{cot for imb}

Suppose that $\mcX$ is a derived manifold and $M$ is a smooth manifold.  If $i\taking\mcX\to M$ is an imbedding, then the cotangent complex $L_i$ has homology concentrated in degree 1.

The first homology group $H_1(L_i)$ is a vector bundle on $\mcX$, called {\em the conormal bundle of $i$} and denoted $\mcN_i$ or $\mcN_{\mcX/M}$.  The rank of $\mcN_i$ at a point $x\in\mcX$ is given by the formula $$\rank_x\mcN_i=\dim_{i(x)} M-\dim_x\mcX.$$  

In case $\mcX\iso P$ is a smooth manifold, the bundle $\mcN_{P/M}$ is the dual to the usual normal bundle for the imbedding.  In particular, if $i\taking P\to E$ is the zero section of a vector bundle $E\to P$, then $\mcN_{P/E}$ is canonically isomorphic to the dual $E^\vee$ of $E$.

\end{corollary}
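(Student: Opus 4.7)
The plan is to reduce the main claim to a local computation and then invoke Corollary~\ref{cot for euc} via Property~(4) of Corollary~\ref{prop of our cot}. Near any point $x\in\mcX$, the map $i$ is locally a model imbedding, so after passing to an appropriate open subobject there is a homotopy pullback square
\[
\xymatrix{\mcX\ar[r]^t\ar[d]_i\ulhlimit & \RR^0\ar[d]^0 \\ U\ar[r]_f & \RR^k}
\]
in $\LRS$, where $U\subseteq M$ is open. Since the origin $0\taking\RR^0\to\RR^k$ is an imbedding, Property~(4) yields a weak equivalence $t^*L_{\RR^0/\RR^k}\simeq L_i$ of $\mcO_\mcX$-modules. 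By Corollary~\ref{cot for euc} the complex $L_{\RR^0/\RR^k}$ has homology concentrated in degree $1$ with $H_1$ a $k$-dimensional real vector space; pulling back along $t$ therefore gives that $L_i$ has homology concentrated in degree $1$ with $H_1(L_i)$ locally free of rank $k$. This provides the vector bundle structure on $\mcN_i=H_1(L_i)$. The rank formula follows immediately: if $n=\dim_{i(x)}M$, then the local model has virtual dimension $\dim_x\mcX=n-k$ (Corollary~\ref{euler of der}), so $\rank_x\mcN_i=k=\dim_{i(x)}M-\dim_x\mcX$.

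For the claim that $\mcN_{P/M}$ is the classical conormal bundle when $\mcX\iso P$ is a smooth manifold, the plan is to apply Property~(5) to the composition $P\To{i}M\to\RR^0$. The resulting exact triangle $i^*L_M\to L_P\to L_i\to i^*L_M[-1]$ simplifies because, for smooth manifolds, both $L_P$ and $L_M$ are concentrated in degree $0$ with values the cotangent bundles $T^*P$ and $T^*M$. The long exact sequence in homology, combined with the classical fact that $i^*T^*M\twoheadrightarrow T^*P$ is surjective for a smooth imbedding, then forces $H_0(L_i)=0$ and identifies $H_1(L_i)$ with the kernel of $i^*T^*M\to T^*P$, which is exactly $(i^*TM/TP)^\vee = N^\vee_{P/M}$.

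For the zero section $i\taking P\to E$ of a vector bundle $\pi\taking E\to P$, the plan is to apply Property~(5) to $P\To{i}E\To{\pi}P$. Since $\pi\circ i=\id_P$, the cotangent complex $L_{\id_P}$ vanishes, so the exact triangle forces $L_i$ to differ from $i^*L_\pi$ only by a shift in degree. Because $\pi$ is a submersion of smooth manifolds, a local computation on a trivializing cover (where $\pi$ becomes a coordinate projection and Corollary~\ref{cot for euc} applies fiberwise) shows that $L_\pi$ is concentrated in degree $0$ with $\Omega^1_\pi\iso \pi^*E^\vee$. Pulling back along $i$ and using $\pi\circ i=\id_P$ then yields $\mcN_{P/E}=H_1(L_i)\iso i^*\pi^*E^\vee=E^\vee$.

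The main technical subtlety is that Property~(4) produces the identification $L_i\simeq t^*L_{\RR^0/\RR^k}$ only locally on $\mcX$, whereas the conclusion requires a genuine vector bundle structure on $\mcN_i$ across all of $\mcX$. This will be handled by the contravariant functoriality in Property~(2) applied to the transitions between overlapping local models: because these transitions are smooth maps of ambient Euclidean spaces, the induced isomorphisms of $H_1(L_i)$ with free modules of rank $k$ patch coherently, producing the asserted global vector bundle.
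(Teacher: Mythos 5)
Your proposal takes essentially the same route as the paper: localize to a model imbedding, invoke Property (4) of Corollary~\ref{prop of our cot} to identify $L_i$ with the pullback of $L_{\RR^0/\RR^k}$, apply Corollary~\ref{cot for euc} to conclude homology is concentrated in degree~$1$ and locally free of rank $k$, and read off the rank formula from the Euler characteristic; for the smooth case you, like the paper, use the transitivity triangle for $P\To{i}M\to\RR^0$ to obtain the short exact sequence $0\to\mcN_{P/M}\to i^*\Omega^1_M\to\Omega^1_P\to 0$. Two small remarks are in order.

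First, for the final ``in particular'' clause about the zero section $i\taking P\to E$, your argument differs from the paper's. The paper simply plugs into the short exact sequence $0\to\mcN_{P/E}\to i^*\Omega^1_E\to\Omega^1_P\to 0$ together with the splitting $i^*\Omega^1_E\iso\Omega^1_P\oplus E^\vee$ coming from $\pi\circ i=\id_P$. You instead apply Property~(5) to $P\To{i}E\To{\pi}P$, use $L_{P/P}\simeq 0$ to get $L_i\simeq i^*L_\pi[-1]$, and then compute $L_\pi$. Both are correct; your version requires the auxiliary observation that $L_\pi$ is concentrated in degree~$0$ with $\Omega^1_\pi\iso\pi^*E^\vee$ (which you justify by localizing to a coordinate projection, though note that Corollary~\ref{cot for euc} as stated gives absolute cotangent complexes, so one must additionally run the transitivity triangle for $\RR^{n+k}\to\RR^n\to\RR^0$ to see that the relative one is $0$-truncated). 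The paper's route is slightly shorter, yours is perhaps more conceptual; either is acceptable.

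Second, the ``main technical subtlety'' you raise at the end is not actually an issue, and your proposed resolution slightly misidentifies what is going on. The complex $L_i$ and hence the sheaf $H_1(L_i)$ are globally defined from the outset; Property~(4) is used only to compute this already-existing sheaf locally. Consequently there is nothing to ``patch coherently'': once you know that $H_1(L_i)$ is locally free of finite rank and that the rank is locally constant (which follows from the Euler-characteristic argument, since $\rank_x\mcN_i=\dim M-\dim_x\mcX$ and $\dim_x\mcX$ is locally constant by Corollary~\ref{euler of der}), you are done. No use of the functoriality in Property~(2) to compare transition maps is needed. This does not affect the correctness of your argument, but it is worth being clear that the global vector bundle structure comes for free from the global definition of $L_i$, not from a descent argument.
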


\begin{proof}

All but the final assertion can established locally on $\mcX$, and we proceed as follows.  Imbeddings $i$ of derived manifolds are locally of the form $$\xymatrix{\mcX\ar[r]\ar[d]_i\ulhlimit&\RR^0\ar[d]^z\\\RR^n\ar[r]_f&\RR^k.}$$   By Property (4), we have a quasi-isomorphism $L_i\we L_z$.  The claim that $L_i$ is locally free and has homology concentrated in degree 1 now follows from Corollary \ref{cot for euc}.  Note that the conormal bundle $\mcN_i=H_1(L_i)$ has rank $k$.

The exact triangle for the composable morphisms $\mcX\To{i}\RR^n\To{f}\RR^k$ implies that the Euler characteristic of $L_\mcX$ is $n-k$, and the second assertion follows.

For the final assertion, we use the exact sequence $$0\to\mcN_{P/M}\to i^*\Omega^1_M\to\Omega^1_P\to 0.$$

\end{proof}

\subsection{Other calculations}

In this subsection we prove some results which will be useful later.

\begin{lemma}\label{Nakayama}

Suppose given a diagram $$\xymatrix{\mcX\ar[r]^g\ar[d]_f&\mcX'\ar[dl]^{f'}\\ \mcY&}$$ of local smooth-ringed spaces such that $g,f,$ and 
$f'$ are closed immersions, the underlying map of topological spaces $g:X\to X'$ is a homeomorphism, and the induced map $g^*L_{f'}\to L_f$ is a quasi-isomorphism.  Then $g$ is an equivalence.

\end{lemma}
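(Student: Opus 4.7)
The plan is to reduce the lemma, via the exact triangle for cotangent complexes, to a stalkwise algebraic question, which is then settled by Nakayama's lemma followed by a Postnikov-tower induction. First, I would apply Property~(5) of Corollary~\ref{prop of our cot} to the composition $\mcX \xrightarrow{g} \mcX' \xrightarrow{f'} \mcY$, which yields the exact triangle
$$g^*L_{f'} \to L_f \to L_g \to g^*L_{f'}[-1]$$
of $U(\mcO_X)$-modules. Since the first map is a quasi-isomorphism by hypothesis, we obtain $L_g \simeq 0$.

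Since the underlying map of topological spaces is already a homeomorphism, it remains to show $g^\sharp \colon g^*\mcO_{X'} \to \mcO_X$ is a weak equivalence in $\Shv(X, \sC)$. By Corollary~\ref{we on und alg} this can be checked after passage to underlying simplicial $\RR$-algebras, and weak equivalences of sheaves of simplicial rings are detected on stalks. So fix $x \in X$ and let $\phi \colon A \to B$ denote the induced map on stalks of the underlying simplicial commutative rings; we know (i) $L_\phi \simeq L_{g,x} \simeq 0$, and (ii) $\pi_0\phi$ is surjective, since $g$ is an imbedding (see the remark following Definition~\ref{def:dman}).

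The remaining algebraic step proceeds in two stages. First, shrinking to a neighborhood where both $\mcX'$ and $\mcX$ are local models, the kernel $I \subseteq \pi_0 A$ of $\pi_0\phi$ is finitely generated (by the extra equations cutting $\mcX$ out of $\mcX'$), and $H_0 L_\phi = \Omega^1_{\pi_0 B/\pi_0 A} \cong I/I^2$ vanishes by assumption. Since $\phi$ is local, $I$ lies inside the maximal ideal $\mf{m}_A \subseteq \pi_0 A$, so $I = I^2 \subseteq I\cdot\mf{m}_A$, and classical Nakayama forces $I = 0$; hence $\pi_0 \phi$ is an isomorphism. Second, I would promote this to a weak equivalence by induction on the Postnikov tower: assuming $\pi_i\phi$ is an isomorphism for $i < n$, the map $\tau_{\leq n-1}A \to \tau_{\leq n-1}B$ is an equivalence, and the obstruction to this extending to an equivalence $\tau_{\leq n}A \to \tau_{\leq n}B$ is controlled (via classical Andr\'e--Quillen obstruction theory for square-zero extensions of simplicial commutative $\RR$-algebras) by a class in a group computed from $L_\phi$, which vanishes. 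Hence $\pi_n\phi$ is an isomorphism for all $n$, so $\phi$ is a weak equivalence. This gives the desired stalkwise equivalence, so $g^\sharp$ is a weak equivalence in $\Shv(X,\sC)$, and $g$ is an equivalence in $\LRS$.

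The main obstacle is the Postnikov induction step: while this kind of argument is standard in derived algebraic geometry, it requires some care to verify that the Andr\'e--Quillen obstruction theory for simplicial commutative $\RR$-algebras applies correctly to the stalks $A \to B$ of a map of local $C^\infty$-ringed spaces, and in particular that the relevant finite generation holds to justify Nakayama at the base of the induction. The Nakayama step at $\pi_0$ and the triangle computation at the beginning are comparatively routine.
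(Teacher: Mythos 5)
Your proof follows essentially the same route as the paper's: reduce to stalks, pass to underlying simplicial commutative rings via Corollary \ref{we on und alg}, use the exact triangle for $f'\circ g=f$ to conclude $L_g\simeq 0$, observe surjectivity on $\pi_0$, identify the conormal with $I/I^2$, apply Nakayama to get $I=0$, and conclude. The paper does exactly this, citing Illusie for the identification $H_1(L_{U_g})\cong I/I^2$, and then simply asserts that $I=0$ forces $U_g$ to be a weak equivalence, leaving implicit the passage from the vanishing of the $\pi_0$-level kernel to an equivalence of simplicial rings. Your explicit Postnikov-tower/obstruction-theory step is precisely the content the paper glosses over, and your observation that finite generation of $I$ is needed for Nakayama (and holds because $\mcX$ is locally cut out of $\mcX'$ by finitely many equations) is another detail the paper omits. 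So yours is a more careful version of the same argument, not a different one, and your honest flagging of the Postnikov step as the place requiring care is well placed.

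One misstatement to correct: you write $H_0 L_\phi = \Omega^1_{\pi_0 B/\pi_0 A} \cong I/I^2$, but for a surjection of discrete rings $\pi_0 A\to\pi_0 B$ with kernel $I$ one has $\Omega^1_{\pi_0B/\pi_0A}=0$ automatically, and it is $H_1(L_\phi)$, not $H_0(L_\phi)$, that is identified with $I/I^2$ (this is exactly what the paper cites from Illusie, and is what Corollary \ref{cot for imb} says for imbeddings). Because $L_\phi\simeq 0$ kills all homology groups, your conclusion $I/I^2=0$ is still correct, but you should read it off from $H_1$, not from $H_0$.
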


\begin{proof}

It suffices to prove this on stalks; thus we may assume that $X$ and $X'$ are points.  Let $U_X$ and $U_{X'}$ denote the local simplicial commutative rings underlying $\mcO_X$ and $\mcO_{X'}$, and let $U_g\taking U_{X'}\to U_X$ denote the map induced by $g$.  By Corollary \ref{we on und alg}, it suffices to show that $U_g$ is a weak equivalence.  

Since $g$ is a closed immersion, $U_g$ is surjective; let $I$ be the kernel of $U_g$.  It is proved in Corollary \cite[III.1.2.8.1]{Ill} that the conormal bundle $H_1(L_{U_g})$ is isomorphic to $I/I^2$.  By the distinguished triangle associated to the composition $f'\circ g=f$, we find that $L_{U_g}=0$, so $I/I^2=0$.  Thus, by Nakayama's lemma, $I$=0, so $U_g$ is indeed a weak equivalence.

\end{proof}

\begin{proposition}\label{fib pro cot iso}

Suppose that $p\taking E\to M$ is a vector bundle.  Suppose that $s:M\to E$ is a section of $p$ such that the diagram \begin{eqnarray}\label{dia:fib 
pro}\Sq{(X,\mcO_X)}{M}{M}{E}{f}{f}{s}{z}\end{eqnarray}
commutes, and that the diagram of spaces underlying (\ref{dia:fib pro}) is a pullback; i.e. $X=M\cross_EM$ in $\CG$.  The diagram induces a  
morphism of vector bundles $$\lambda_s:f^*(E)^\vee\to \mcN_f,$$ which is an isomorphism if and only if Diagram \ref{dia:fib pro} is a pullback in $\LRS$.

\end{proposition}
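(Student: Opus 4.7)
The plan is to first build $\lambda_s$ out of the square via the cotangent complex machinery (Corollary~\ref{prop of our cot}(2)), and then derive the equivalence ``$\lambda_s$ is an isomorphism iff Diagram~(\ref{dia:fib pro}) is a pullback'' from Property~(4) of that corollary in one direction and from Lemma~\ref{Nakayama} in the other.

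First I would recognize Diagram~(\ref{dia:fib pro}) as precisely a morphism of arrows $i=(i_0,i_1)=(f,s)\colon f\to z$ in the sense of Corollary~\ref{prop of our cot}(2), which produces a canonical map $L_i\colon f^{*}L_z\to L_f$ of complexes of $U(\mcO_X)$-modules on $\mcX$. Since $z\colon M\to E$ is the zero section of a vector bundle, Corollary~\ref{cot for imb} identifies $L_z$ with $E^\vee$ concentrated in degree $1$. The hypothesis that $\mcN_f$ is defined forces $f$ to be an imbedding, so $L_f$ is also concentrated in degree $1$ with $H_1(L_f)=\mcN_f$. Taking $H_1$ of $L_i$ then defines $\lambda_s\colon f^{*}(E)^\vee\to\mcN_f$.

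For the ($\Leftarrow$) direction, suppose Diagram~(\ref{dia:fib pro}) is a homotopy pullback in $\LRS$. Since $z$ (playing the role of $f'$ in the corollary) is an immersion, Property~(4) of Corollary~\ref{prop of our cot} tells us that $L_i\colon f^{*}L_z\to L_f$ is a quasi-isomorphism; passing to $H_1$ gives that $\lambda_s$ is an isomorphism.

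For the ($\Rightarrow$) direction I would form the homotopy pullback $\mcX':=M\cross_E^h M$ in $\LRS$, with projection $f'\colon\mcX'\to M$. Its underlying space is $M\cross_E M=X$, and $f'$ is an imbedding as a pullback of $z$ (Lemma~\ref{imbedding lemma}). The universal property supplies a canonical map $g\colon\mcX\to\mcX'$ over $M$ which is a homeomorphism on underlying spaces; since $f=f'\circ g$ is an imbedding, surjectivity of $f^\sharp$ on $\pi_0$ forces surjectivity of $g^\sharp$ on $\pi_0$, so $g$ is a closed immersion in the sense of Lemma~\ref{Nakayama}. Applying the ($\Leftarrow$) direction already proved to $\mcX'$, the map $f'^{*}L_z\to L_{f'}$ is a quasi-isomorphism; pulling back along $g$ gives an equivalence $g^{*}L_{f'}\simeq g^{*}f'^{*}L_z=f^{*}L_z$. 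Naturality of the morphism-of-arrows construction, applied to the factorization of $(f,s)\colon f\to z$ as $(g,\id_M)\colon f\to f'$ followed by $(f',s)\colon f'\to z$, identifies the composite $f^{*}L_z\xrightarrow{\sim}g^{*}L_{f'}\to L_f$ with $L_i$. Since $\lambda_s=H_1(L_i)$ is an isomorphism and both $g^{*}L_{f'}$ and $L_f$ are concentrated in degree $1$, the map $g^{*}L_{f'}\to L_f$ is a quasi-isomorphism. Lemma~\ref{Nakayama} then yields that $g$ is an equivalence, so $\mcX$ is indeed the homotopy pullback, proving Diagram~(\ref{dia:fib pro}) is a pullback in $\LRS$.

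The main obstacle I expect is the naturality step: verifying that the map $g^{*}L_{f'}\to L_f$ coming from Property~(5) agrees, via the quasi-isomorphism $g^{*}L_{f'}\simeq f^{*}L_z$, with the map $L_i$ defining $\lambda_s$. This is a $2$-out-of-$3$ compatibility for the construction of Corollary~\ref{prop of our cot}(2) with respect to composition of morphisms of arrows; it should follow formally from the functorial description, but it is the only non-formal bookkeeping in the proof and needs careful diagrammatic tracking.
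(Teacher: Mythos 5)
Your proof is correct and follows essentially the same route as the paper: define $\lambda_s$ via Corollary~\ref{prop of our cot}(2), deduce the easy direction from Property~(4), and for the converse compare $\mcX$ with the actual homotopy pullback $\mcX'$ via the induced map $g$, then invoke Lemma~\ref{Nakayama}. The naturality point you flag (that the map $g^{*}L_{f'}\to L_f$ fed into Lemma~\ref{Nakayama} coincides with the composite $g^{*}L_{f'}\xrightarrow{\sim}f^{*}L_z\to L_f$) is indeed used implicitly in the paper's proof without comment, so your extra care there is warranted rather than superfluous.
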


\begin{proof}

On any open subset $U$ of $X$, we have a commutative square $$\xymatrix{\mcO_E(U)\ar[r]\ar[d]&\mcO_M(U)\ar[d]\\\mcO_M(U)\ar[r]&\mcO_X(U)}$$ of sheaves on $X$.  By Property (2) of cotangent complexes, this square induces a morphism $f^*L_z\to L_f$.  By Corollary \ref{cot for imb}, we may identify $H_1(f^*L_z)$ with $f^*(E)^\vee$, and $H_1(L_f)$ with $\mcN_f$, and we let $\lambda_s\taking f^*(E)^\vee\to\mcN_f$ denote the induced map.

If Diagram \ref{dia:fib pro} is a pullback then $\lambda_s$ is an isomorphism by Property (4), so we have only to prove the converse.

Suppose that $\lambda_s$ is an isomorphism, let $\mcX'$ be the fiber product in the diagram 
$$\hPull{\mcX'}{M}{M}{E,}{f'}{f'}{s}{z}$$ and let $g:\mcX\to \mcX'$ be the induced map.  Note that on underlying spaces $g$ is a homeomorphism.  Since the composition 
$X\To{g} 
X'\To{f'}M$, namely $f$, is a closed immersion, so is $g$.  By assumption and Property (4), $g$ induces an isomorphism on cotangent complexes 
$g^*L_{f'}\To{\iso}f^*L_z\To{\iso}L_f$. The result follows from Lemma \ref{Nakayama}.

\end{proof}

Here is a kind of linearity result.  

\begin{proposition}\label{linearity in sections}

Suppose that $M$ is a manifold and $\mcX_1$ and $\mcX_2$ are derived manifolds which are defined as pullbacks $$\xymatrix{\mcX_1\ar[r]^{j_1}\ar[d]_{j_1}\ulhlimit&M\ar[d]^{z_1}&&\mcX_2\ar[r]^{j_2}\ar[d]_{j_2}\ulhlimit&M\ar[d]^{z_2}\\M\ar[r]_{s_1}&E_1&&M\ar[r]_{s_2}&E_2,}$$ where $E_i\to M$ is a vector bundle, $s_i$ is a section, and $z_i$ is the zero section, for $i=1,2$.  Then $\mcX_1$ and $\mcX_2$ are equivalent as derived manifolds over $M$ if and only if there exists an open neighborhood $U\ss M$ containing both $\mcX_1$ and $\mcX_2$, and an isomorphism $\sigma\taking E_1\to E_2$ over $U$, such that $s_2|_U=\sigma\circ s_1|_U$.

\end{proposition}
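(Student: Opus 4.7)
The plan is to handle the two directions separately, using cotangent complexes as the main tool for the harder ``only if'' direction. The ``if'' direction is a direct diagrammatic argument: since $\sigma$ is $\mcO_U$-linear, it intertwines the zero sections (so $\sigma\circ z_1|_U=z_2|_U$), and together with $s_2|_U=\sigma\circ s_1|_U$ this means the pair $(\id_U,\sigma)$ induces an isomorphism of the two defining pullback diagrams restricted to $U$. Since $\mcX_1,\mcX_2\ss U$ on underlying spaces, these restricted squares still compute $\mcX_1$ and $\mcX_2$ as homotopy pullbacks over $M$, and so $\mcX_1\we\mcX_2$ over $M$ as claimed.

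For the ``only if'' direction, suppose $\mcX_1\we\mcX_2$ over $M$. Since an equivalence of local $C^\infty$-ringed spaces is a homeomorphism on underlying spaces, I may assume $\U(\mcX_1)=\U(\mcX_2)=:X\ss M$. Proposition \ref{fib pro cot iso} supplies isomorphisms $\lambda_{s_i}\taking j_i^*E_i^\vee\To{\iso}\mcN_{j_i}$, and Corollary \ref{prop of our cot}(3) applied to the equivalence yields a quasi-isomorphism $\mcN_{j_1}\we\mcN_{j_2}$ under the identification of $\mcX_1$ with $\mcX_2$; dualizing gives an isomorphism $\alpha\taking j_1^*E_1\To{\iso}j_2^*E_2$ of vector bundles on $X$, and in particular $\rank E_1=\rank E_2=:k$. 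Because $\tn{Iso}(E_1,E_2)\ss\tn{Hom}(E_1,E_2)$ is an open subbundle and $X\ss M$ is closed in an ambient manifold, I would extend $\alpha$ to a bundle isomorphism $\sigma_0\taking E_1|_{U_0}\to E_2|_{U_0}$ on some open neighborhood $U_0$ of $X$, using the usual extension of sections from closed subsets together with openness of invertibility. The remaining task is to modify $\sigma_0$ into a $\sigma$ on some $V\supseteq X$ for which $s_2|_V=\sigma\circ s_1|_V$ on the nose, not merely on $X$ (where both sides vanish trivially).

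The main obstacle is this last modification, which requires using the derived equivalence itself and not merely its shadow on cotangent complexes. Trivializing locally and writing $s_i=(\id,f_i)$ for smooth $f_i\taking U\to\RR^k$, the task becomes the production of a smooth matrix-valued function $A\taking V\to GL_k(\RR)$ with $A\cdot f_1=f_2$ and $A|_X=\alpha|_X$. I would obtain such an $A$ by passing through the underlying simplicial $\RR$-algebra functor $U\taking\sC\to\sR$: the equivalence $\mcO_{\mcX_1}\we\mcO_{\mcX_2}$ over $\mcO_M$ becomes a quasi-isomorphism between the Koszul-style resolutions of $\mcO_M$ along $f_1$ and along $f_2$, and restricting this quasi-isomorphism to degree-one generators yields the desired $A$; its restriction to $X$ coincides with $\alpha$ by construction, so after shrinking $V$ the matrix $A$ lands in $GL_k(\RR)$. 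Globalizing from local trivializations to an open $V$ containing both $\mcX_1$ and $\mcX_2$ would then proceed either by a partition-of-unity argument on the affine space of $A$ lifting $\alpha$, or by descent for the sheaf of such lifts. The technical heart of the proof is verifying that the homotopical equivalence actually produces $A$ at the level of generators; this leans on left properness of $\sC$ and on the cofibrancy properties of Koszul-type resolutions in this model category, and is the principal piece of technical work.
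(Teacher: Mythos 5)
Your ``if'' direction agrees with the paper's in both structure and content. For the ``only if'' direction you take a genuinely different route. The paper never leaves the level of cotangent complexes: after identifying $E_1\iso E_2$ near $X$ (calling both $E$), it compares the two short exact sequences $0\to H_1(L_{s_i})\to\Omega^1_E\to\Omega^1_U\to 0$ arising from $U\To{s_i}E\to\RR^0$, fills in a middle isomorphism $\tau\taking\Omega^1_E\to\Omega^1_E$, and finishes by invoking the claim that two sections whose induced pullbacks on 1-forms are so related must differ by a bundle automorphism $\sigma$. You instead decide the cotangent-complex data is insufficient (``not merely its shadow'') and propose to produce the matrix $A$, and hence $\sigma$, directly from a quasi-isomorphism of Koszul-type resolutions on the underlying-$\RR$-algebra side, using Theorem \ref{commuting functors} / Corollary \ref{com fun for imb} to convert the derived equivalence into such a quasi-isomorphism. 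A small credit to your version is that you make explicit the extension of the conormal-bundle isomorphism from the closed set $X$ to an open neighborhood; the paper leaves this implicit.

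That said, you expressly flag the decisive step as not carried out: you write that extracting $A$ ``is the principal piece of technical work,'' and then do not do that work. That is a genuine gap, located exactly where the content of the proposition lies. The step is not unreasonable — if one represents the quasi-isomorphism by an honest morphism of cofibrant connective cdga models (cf.\ Proposition \ref{prop:sr and dgas}), the morphism respects the internal grading, its degree-one component over $\mcO_M$ is a matrix $A$, and the chain-map condition forces $Af_2=f_1$ exactly (giving $\sigma^{-1}$, then invert); invertibility of $A$ near $X$ follows from the conormal-bundle isomorphism already in hand. But none of this is verified in the proposal, the appeal to left properness of $\sC$ is not explained, and the globalization of the local matrices is only gestured at. As written, the proposal correctly identifies the hard point but stops short of resolving it, whereas the paper dispatches it in a single comparison of the K\"ahler-differential exact sequences.
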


\begin{proof}

Suppose first $U\ss M$ is an open neighborhood of both $\mcX_1$ and $\mcX_2$, and denote $E_i|_U$ and $s_i|_U$ by $E_i$ and $s_i$, respectively for $i=1,2$.  Suppose that $\sigma\taking E_1\to E_2$ is an isomorphism.  Suppose that $s_2=\sigma\circ s_1$.  Consider the diagram $$\xymatrix{\mcX_1\ulhlimit\ar[r]^i\ar[d]_i&U\ar@{=}[r]\ar[d]^{z_1}&U\ar[d]^{z_2}\\U\ar[r]_-{s_1}&E_1\ar[r]_\sigma&E_2,}$$ where the left-hand square is Cartesian.  In fact, the right-hand square is Cartesian as well, because $\sigma$ is an isomorphism of vector bundles, and in particular fixes the zero section.  Thus we see that $\mcX_1$ and $\mcX_2$ are equivalent as derived manifolds over $U$.  

For the converse, suppose that we have an equivalence $f\taking\mcX_2\to\mcX_1$ such that $j_2=j_1\circ f$.  In particular, on underlying topological spaces we have $X_1=X_2=:X$, and on cotangent complexes we have a quasi-isomorphism $f^*L_{j_1}\To{\iso} L_{j_2}$.  In particular, this implies that $H_1(L_{z_1})$ is isomorphic to $H_1(L_{z_2})$, so $E_1$ and $E_2$ are isomorphic vector bundles on $U$ by Proposition \ref{fib pro cot iso}.  We can write $E$ to denote both bundles, and adjust the sections as necessary.

Consider the diagram $$\xymatrix@=15pt{&\mcX_1\ar[rr]^(.6){j_1}\ar[dd]_(.7){j_1}&&M\ar[dd]_(.6){z_1}\ar@{=}[dl]\\\mcX_2\ar[ru]^f\ar[rr]^(.7){j_2}\ar[dd]_(.6){j_2}&&M\ar[dd]_(.7){z_2}\\&M\ar@{=}[dl]\ar[rr]_(.7){s_1}&&E\\M\ar[rr]_(.6){s_2}&&E&}$$  By the commutativity of the left-hand square of the diagram and by Property (4) of cotangent complexes, one has a chain of quasi-isomorphisms $$j_2^*L_{s_1}=f^*j_1^*L_{s_1}\To{\iso}f^*L_{j_1}\To{\iso}L_{j_2}\To{\iso}j_2^*L_{s_2}.$$  

We write part of the long-exact sequences coming from the composable arrows $U\To{s_1}E\to\RR^0$ and $U\To{s_2}E\to\RR^0$ and furnish some morphisms to obtain the solid-arrow diagram: $$\xymatrix{0\ar[r]&H_1(L_{s_1})\ar[r]\ar[d]_\iso&\Omega^1_{E}\ar[r]^{s_1^*}\ar@{..>}[d]_\tau&\Omega^1_U\ar@{=}[d]\ar[r]&0\\0\ar[r]&H_1(L_{s_2})\ar[r]&\Omega^1_{E}\ar[r]_{s_2^*}&\Omega^1_U\ar[r]&0.}$$  By the 5-lemma, there is an isomorphism $\tau\taking\Omega^1_E\to\Omega^1_E$ making the diagrams commute. 

Now every section $s\taking U\to E$ of a vector bundle $E\to U$ induces a pullback map $s^*\taking\Omega^1_E\to\Omega^1_U$, and two sections induce equivalent pullback maps if and only if they differ by an automorphism of $E$ fixing $U$.  Since $s_1$ and $s_2$ induce equivalent pullback maps, there is an automorphism $\sigma\taking E\to E$ with $s_2=\sigma\circ s_1$.  

\end{proof}

\section{Proofs of technical results}\label{proofs}

Recall from Section \ref{cinf} that $H_{\RR^i}\in\sC$ is the discrete $C^\infty$-ring corepresenting $\RR^i\in\EE$.  A smooth map $f\taking\RR^i\to\RR^j$ (contravariantly) induces a morphism of $C^\infty$-rings $H_{\RR^j}\to H_{\RR^i}$, which we often denote by $f$ for convenience.  Recall also that in $\sC$, there is a canonical weak equivalence $H_{\RR^{i+j}}\To{\we}H_{\RR^i}\amalg H_{\RR^j}.$

Let $U\taking\sC\to\sR$ denote the ``underlying simplicial commutative ring" functor from Corollary \ref{we on und alg}.

\begin{lemma}\label{hadamard}

Let $m,n,p\in\NN$ with $p\leq m$.  Let $x\taking\RR^{n+m}\to\RR^m$ denote the projection onto the first $m$ coordinates, let $g\taking\RR^p\to\RR^m$ denote the inclusion of a $p$-plane in $\RR^m$, and let $\Psi$ be the diagram $$\xymatrix{H_{\RR^m}\ar[r]^x\ar[d]_g&H_{\RR^{n+m}}\\H_{\RR^p}}$$ of $C^\infty$-rings.  The homotopy colimit of $\Psi$ is weakly equivalent to $H_{\RR^{n+p}}$.  

Moreover, the application of $U\taking\sC\to\sR$ commutes with taking homotopy colimit of $\Psi$ in the sense that the natural map $$\hocolim(U\Psi)\to U\hocolim(\Psi)$$ is a weak equivalence of simplicial commutative $\RR$-algebras.

\end{lemma}

\begin{proof}

We may assume that $g$ sends the origin to the origin.  For now, we assume that $p=0$ and $m=1$.

Consider the all-Cartesian diagram of manifolds $$\xymatrix{\RR^n\ar[r]\ar[d]\ullimit&\RR^{n+1}\ar[r]\ar[d]_x\ullimit&\RR^n\ar[d]\\\RR^0\ar[r]_g&\RR\ar[r]&\RR^0.}$$  Apply $H\taking\Man\op\to\sC$ to obtain the diagram $$\xymatrix{H_{\RR^n}&H_{\RR^{n+1}}\ar[l]&H_{\RR^n}\ar[l]\\H_{\RR^0}\ar[u]&H_{\RR}\ar[l]_g\ar[u]^x&H_{\RR^0}\ar[u]\ar[l].}$$  Since $H$ sends products in $\E$ to homotopy pushouts in $\sC$, the right-hand square and the big rectangle are homotopy pushouts.  Hence the left square is as well, proving the first assertion (in case $p=0$ and $m=1$).

For notational reasons, let $U^i$ denote $U(H_{\RR^i})$, so that $U^i$ is the (discrete simplicial) commutative ring whose elements are smooth maps $\RR^i\to\RR$.
Define a simplicial commutative $\RR$-algebra $D$ as the homotopy colimit in the diagram $$\hPush{U^1}{U^{n+1}}{U^0}{D.}{x}{g}{}{}$$  We must show that the natural map $D\to U^n$ is a weak equivalence of simplicial commutative rings.  Recall that the homotopy groups of a homotopy pushout of simplicial commutative rings are the $\Tor$ groups of the corresponding tensor product of chain complexes.

Since $x$ is a nonzerodivisor in the ring $U^{n+1}$, the homotopy groups 
of $D$ are $$\pi_0(D)=U^{n+1}\tensor_{U^1}U^0; \tn{ and } \pi_i(D)=0, i>0.$$ Thus, $\pi_0(D)$ can be identified with the equivalence 
classes of smooth functions $f\taking\RR^{n+1}\to\RR$, where $f\sim g$ if $f-g$ is a multiple of $x$.

On the other hand, we can identify elements of the ring $U^n$ of smooth functions on $\RR^n$  with the equivalence classes of functions $f\taking\RR^{n+1}\to\RR$, where $f\sim g$ if 
$$f(0,x_2,x_3,\ldots x_{n+1})=g(0,x_2,x_3,\ldots,x_{n+1}).$$  Thus to prove that the map $D\to U^n$ is a weak 
equivalence, we must only show that if a smooth function $f(x_1,\ldots,x_{n+1})\taking\RR^{n+1}\to\RR$ vanishes whenever $x_1=0$, then $x_1$ 
divides $f$.  This is called Hadamard's lemma, and it follows from the definition of smooth functions.  Indeed, given a smooth function $f(x_1,\ldots,x_{n+1})\taking\RR^{n+1}\to\RR$ which vanishes whenever $x_1=0$, define a 
function $g\taking\RR^{n+1}\to\RR$ by the formula $$g(x_1,\ldots,x_{n+1})=\lim_{a\to x_1}\frac{f(a,x_2,x_3,\ldots,x_{n+1})}{a}.$$  It is clear 
that $g$ is smooth, and $xg=f$.  

We have now proved both assertions in the case that $p=0$ and $m=1$; we continue to assume $p=0$ and prove the result for general $m+1$ by induction.  The inductive step follows from the all-Cartesian diagram below, in which each vertical arrow is an inclusion of a plane and each horizontal arrow is a coordinate projection: $$\xymatrix{\RR^n\ar[r]\ar[d]\ullimit&\RR^0\ar[d]\\\RR^{m+n}\ar[r]\ar[d]\ullimit&\RR^m\ar[r]\ar[d]\ullimit&\RR^0\ar[d]\\\RR^{n+m+1}\ar[r]&\RR^{m+1}\ar[r]&\RR.}$$  Apply $H$ to this diagram.  The arguments above implies that the two assertions hold for the right square and bottom rectangle; thus they hold for the bottom left square.  The inductive hypothesis implies that the two assertions hold for the top square, so they hold for the left rectangle, as desired.

For the case of general $p$, let $k\taking\RR^m\to\RR^{m-p}$ denote the projection orthogonal to $g$.  Applying $H$ to the all-Cartesian diagram $$\xymatrix{\RR^{n+p}\ar[r]\ar[d]\ullimit&\RR^p\ar[r]\ar[d]^g\ullimit&\RR^0\ar[d]\\\RR^{n+m}\ar[r]_x&\RR^m\ar[r]_k&\RR^{m-p},}$$ the result holds for the right square and the big rectangle (for both of which $p=0$), so it holds for the left square as well.

\end{proof}

\begin{remark}

Note that (the non-formal part of) Lemma \ref{hadamard} relies heavily on the fact that we are dealing with smooth maps.  It is this lemma which fails in the setting of topological manifolds, piecewise linear manifolds, etc.  

\end{remark}

Let $\mcM$ be a model category, and let ${\bf \Delta}$ denote the simplicial indexing category.  Recall that a simplicial resolution of an object $X\in\mcM$ consists of a simplicial diagram $X'\res\taking{\bf\Delta}\op\to\mcM$ and an augmentation map $X'\res\to X$, such that the induced map $\hocolim(X')\to X$ is a weak equivalence in $\mcM$.  

Conversely, the geometric realization of a simplicial object $Y\res\taking{\bf\Delta}\op\to\mcM$ is the object in $\mcM$ obtained by taking the homotopy colimit of the diagram $Y\res$.  

\begin{proposition}\label{und pre geo rea}

The functors $U\taking\sC\to\sR$ and $-(\RR)\taking\sC\to\sSets$ each preserve geometric realizations.

\end{proposition}

\begin{proof}

By \cite[A.1]{Lur-DAG1}, the geometric realization of any simplicial object in any of the model categories $\sC$, $\sR$, and $\sSets$ is equivalent to the diagonal of 
the corresponding bisimplicial object.  Since both $U$ and $-(\RR)$ are functors which preserve the diagonal, they each commute with geometric realization.

\end{proof}

Let $-(\RR)\taking\sC\to\sSets$ denote the functor $F\mapsto F(\RR)$.  It is easy to see that $-(\RR)$ is a right Quillen functor.  Its left adjoint is $(-\tensor H_\RR)\taking K\mapsto K\tensor H_\RR$ (although note that $K\tensor H_\RR$ is not generally fibrant in $\sC$).  We call a $C^\infty$-ring {\em free} if it is in the essential image of this functor $-\tensor H_\RR$, and similarly, we call a morphism of free $C^\infty$-rings {\em free} if it is in the image of this functor.

Thus, a morphism $d'\taking H_{\RR^k}\to H_{\RR^\ell}$ is free if and only if it is induced by a function $d\taking\{1,\ldots,k\}\to\{1,\ldots,\ell\}$.  To make $d'$ explicit, we just need to provide a dual map $\RR^\ell\to\RR^k$; for each $1\leq i\leq k$, we supply the map $\RR^\ell\to\RR$ given by projection onto the $d(i)$ coordinate.  

If $K$ is a simplicial set such that each $K_i$ is a finite set with cardinality $|K_i|=n_i$, then $X:=K\tensor H_\RR$ is the simplicial $C^\infty$-ring with $X_i=H_{\RR^{n_i}}$, and for a map $[\ell]\to[k]$ in $\bf{\Delta}$, the structure map $X_k\to X_\ell$ is the free map defined by $K_k\to K_\ell$.

\begin{lemma}\label{fun sim res}

Let $X$ be a simplicial $C^\infty$-ring.  There exists a functorial simplicial resolution $X'\res\to X$ in which $X'_n$ is a free simplicial $C^\infty$-ring for each $n$.  

Moreover, if $f\taking X\to Y$ is a morphism of $C^\infty$-rings and $f'\res\taking X'\res\to Y'\res$ is the induced map on simplicial resolutions, then for each $n\in\NN$, the map $f'_n\taking X'_n\to Y'_n$ is a morphism of free $C^\infty$-rings.

\end{lemma}

\begin{proof}

Let $R$ denote the underlying simplicial set functor $-(\RR):\sC\to\sSets$, and let $F$ denote its left adjoint.  The 
comonad $FR$ gives rise to an augmented simplicial $C^\infty$-ring, 
$$\xymatrix@1{\Phi=\cdots\arrr{r}&FRFR(X)\arr{r}&FR(X)\ar[r]&X.}$$  By \cite[8.6.10]{Wei}, the induced augmented simplicial set 
$$\xymatrix@1{R\Phi=\cdots\arrr{r}&RFRFR(X)\arr{r}&RFR(X)\ar[r]&R(X)}$$ is a weak equivalence.  By Propositions \ref{we on und alg} and \ref{und pre geo rea}, $\Phi$ is a simplicial resolution.

The second assertion is clear by construction.

\end{proof}

In the following theorem, we use a basic fact about simplicial sets: if $g\taking F\to H$ is a fibration of simplicial sets and $\pi_0g$ is a surjection of sets, then for each $n\in\NN$ the function $g_n\taking F_n\to H_n$ is surjective.  This is proved using the left lifting property for the cone point inclusion $\Delta^0\to\Delta^{n+1}$.

\begin{theorem}\label{commuting functors}

Let $\Psi$ be a diagram $$\xymatrix{F\ar[r]^f\ar[d]_g&G\\H}$$ of cofibrant-fibrant $C^\infty$-rings.  Assume that $\pi_0g\taking\pi_0F\to\pi_0H$ is a surjection.  Then application of $U\taking\sC\to\sR$ commutes with taking the homotopy colimit of $\Psi$ in the sense that the natural map $$\hocolim(U\Psi)\to U\hocolim(\Psi)$$ is a weak equivalence of simplicial commutative rings.

\end{theorem}

\begin{proof}

We prove the result by using simplicial resolutions to reduce to the case proved in Lemma \ref{hadamard}.  We  begin with a series of replacements and simplifications of the diagram $\Psi$, each of which preserves both $\hocolim(U\Psi)$ and $U\hocolim(\Psi)$.  

First, replace $g$ with a fibration and $f$ with a cofibration.  Note that since $g$ is surjective on $\pi_0$ and is a fibration, it is surjective in each degree; note also that $f$ is injective in every degree.  

Next, replace the diagram by the simplicial resolution given by Lemma \ref{fun sim res}.  This is a diagram $H'\res\From{g'\res}F'\res\To{f'\res}G'\res$, which has the same homotopy colimit.  We can compute this homotopy colimit by first computing the homotopy colimits $\hocolim(H'_n\From{g'_n}F'_n\To{f'_n}G'_n)$ in each degree, and then taking the geometric realization.   Since $U$ preserves geometric realization (Lemma \ref{und pre geo rea}), we may assume that $\Psi$ is a diagram $H\From{g}F\To{f}G$, in which $F,G,$ and $H$ are free simplicial $C^\infty$-rings, and in which $g$ is surjective and $f$ is injective.  By performing another simplicial resolution, we may assume further that $F,G,$ and $H$ are discrete. 

A free $C^\infty$-ring is the filtered colimit of its finitely generated (free) sub-$C^\infty$-rings; hence we may assume that $F,G,$ and $H$ are finitely generated.  In other words, each is of the form $S\tensor H_\RR$, where $S$ is a finite set.  We are reduced to the case in which $\Psi$ is the diagram $$\xymatrix{H_{\RR^m}\ar[r]^f\ar[d]_g&H_{\RR^n}\\ H_{\RR^p},}$$ where again $g$ is surjective and $f$ is injective.  Since $f$ and $g$ are free maps, they are induced by maps of sets $f_1\taking\{1,\ldots m\}\inj\{1,\ldots,n\}$ and $g_1\taking\{1,\ldots,m\}\to\{1,\ldots,p\}$.  The map $\RR^n\to\RR^m$ underlying $f$ is given by $$(a_1,\ldots,a_n)\mapsto (a_{f(1)},\ldots,a_{f(m)}),$$ which is a projection onto a coordinate plane through the origin, and we may arrange that it is a projection onto the first $m$ coordinates.  The result now follows from Lemma \ref{hadamard}.

\end{proof}

\begin{corollary}\label{com fun for imb}

Suppose that the diagram $$\xymatrix{(A,\mcO_A)\ar[r]^{G}\ar[d]_{F}\ulhlimit&(Y,\mcO_Y)\ar[d]^f\\(X,\mcO_X)\ar[r]_g&(Z,\mcO_Z)}$$ is a homotopy pullback of local $C^\infty$-ringed spaces.  If $g$ is an imbedding then the underlying diagram of ringed spaces is also a homotopy pullback.

\end{corollary}

\begin{proof}

In both the context of local $C^\infty$-ringed spaces and local ringed spaces, the space $A$ is the pullback of the diagram $X\to Z\from Y$.  The sheaf on $A$ is the homotopy colimit of the diagram $$F^*\mcO_X\From{F^*(g^\sharp)} F^*g^*\mcO_Z\To{G^*(f^\sharp)} G^*\mcO_Y,$$ either as a sheaf of $C^\infty$-rings or as a sheaf of simplicial commutative rings.  Note that taking inverse-image sheaves commutes with taking underlying simplicial commutative rings.

Since $g$ is an imbedding, we have seen that $g^\sharp\taking g^*\mcO_Z\to\mcO_X$ is surjective on $\pi_0$, and thus so is its pullback $F^*(g^\sharp)$.  The result now follows from Theorem \ref{commuting functors}.

\end{proof}

We will now give another way of viewing the ``locality condition" on ringed spaces.  Considering sections of the structure sheaf as functions to affine space, a ringed space is local if these functions pull covers back to covers.  This point of view can be found in \cite{SGA4} and \cite{BD}, for example.

Recall the notation $|F|=F(\RR)$ for a $C^\infty$-ring $F$; see Notation \ref{notation:abs val}.  Recall also that $C^\infty(\RR)$ denotes the free (discrete) $C^\infty$-ring on one generator.

\begin{definition}\label{inverse image}

Let $X$ be a topological space and $\mcF$ a sheaf of $C^\infty$-rings on $X$.  Given an open set $U\ss\RR$, let $\chi_U\taking\RR\to\RR$ denote a characteristic function of $U$ (i.e. $\chi_U$ vanishes precisely on $\RR-U$).  Let $f\in|\mcF(X)|_0$ denote a global section.  We will say that an open subset $V\ss X$ is {\em contained in the preimage under $f$ of $U$} if there exists a dotted arrow making the diagram of $C^\infty$-rings \begin{eqnarray}\label{dia:preimage}\xymatrix{C^\infty(\RR)\ar[r]^f\ar[d]&\mcF(X)\ar[d]^{\rho_{X,V}}\\ C^\infty(\RR)[\chi_U^\m1]\ar@{..>}[r]&\mcF(V)}\end{eqnarray} commute up to homotopy.  We say that $V$ {\em is the preimage under $f$ of $U$}, and by abuse of notation write $V=f^\m1(U)$, if it is maximal with respect to being contained in the preimage.  Note that these notions are independent of the choice of characteristic function $\chi_U$ for a given $U\ss\RR$.  Note also that since localization is an epimorphism of $C^\infty$-rings (as it is for ordinary commutative rings; see \cite[2.2,2.6]{MR}), the dotted arrow is unique if it exists.  

If $f,g\in|\mcF(X)|_0$ are homotopic vertices, then for any open subset $U\ss\RR$, one has $f^\m1(U)=g^\m1(U)\ss X$.  Therefore, this preimage functor is well-defined on the set of connected components $\pi_0|\mcF(X)|$.  Furthermore, if $f\in|\mcF(X)|_n$ is any simplex, all of its vertices are found in the same connected component, roughly denoted $\pi_0(f)\in\pi_0|\mcF(X)|$, so we can write $f^\m1(U)$ to denote $\pi_0(f)^\m1(U)$.

\end{definition}

\begin{example}

The above definition can be understood from the viewpoint of algebraic geometry.  Given a scheme $(X,\mcO_X)$ and a global section $f\in\mcO_X(X)=\pi_0\mcO_X(X)$, one can consider $f$ as a scheme morphism from $X$ to the affine line $\AA^1$.  Given a principle open subset $U=\Spec(k[x][g^\m1])\ss\AA^1$, we are interested in its preimage in $X$.  This preimage will be the largest $V\ss X$ on which the map $k[x]\To{f}\mcO_X(X)$ can be lifted to a map $k[x][g^\m1]\to\mcO_X(V)$.  This is the content of Diagram \ref{dia:preimage}.

Up next, we will give an alternate formulation of the condition that a sheaf of $C^\infty$-rings be a local in terms of these preimages.  In the algebro-geometric setting, it comes down to the fact that a sheaf of rings $\mcF$ is a sheaf of {\em local rings} on $X$ if and only if, for every global section $f\in\mcF(X)$, the preimages under $f$ of a cover of $\Spec(k[x])$ form a cover of $X$.  

\end{example}

\begin{lemma}\label{ope cov and fac}

Suppose that $U\ss\RR$ is an open subset of $\RR$, and let $\chi_U$ denote a characteristic function for $U$.  Then $C^\infty(U)=C^\infty(\RR)[\chi_U^\m1]$.  

There is a natural bijection between the set of points $p\in\RR$ and the set of $C^\infty$-functions $A_p\taking C^\infty(\RR)\to C^\infty(\RR^0)$.  Under this correspondence, $p$ is in $U$ if and only if $A_p$ factors through $C^\infty(\RR)[\chi_U^\m1]$.

\end{lemma}

\begin{proof}

This follows from \cite[1.5 and 1.6]{MR}.

\end{proof}

Recall that $F\in\sC$ is said to be a local $C^\infty$-ring if the commutative ring underlying $\pi_0F$ is a local ring (see Definition \ref{local cinf rings}).

\begin{proposition}\label{equiv locality cond}

Let $X$ be a space and $\mcF$ a sheaf of finitely presented $C^\infty$-rings on $X$.  Then $\mcF$ is local if and only if, for any cover of $\RR$ by open subsets $\RR=\bigcup_iU_i$ and for any open $V\ss X$ and local section $f\in\pi_0|\mcF(V)|$, the preimages $f^\m1(U_i)$ form a cover of $V$.

\end{proposition}

\begin{proof}

Choose a representative for $f\in\pi_0|F|$, call it $f\in|F|_0$ for simplicity, and recall that it can be identified with a map $f\taking C^\infty(\RR)\to F$, which is unique up to homotopy.

Both the property of being a sheaf of local $C^\infty$-rings and the above ``preimage of a covering is a covering" property is local on $X$.  Thus we may assume that $X$ is a point.  We are reduced to proving that a $C^\infty$-ring $F$ is a local $C^\infty$-ring if and only if, for any cover of $\RR$ by open sets $\RR=\bigcup_iU_i$ and for any element $f\in|F|_0$, there exists an index $i$ and a dotted arrow making the diagram $$\xymatrix{C^\infty(\RR)\ar[d]\ar[r]^f&F\\C^\infty(\RR)[\chi_{U_i}^\m1]\ar@{..>}[ur]&}$$ commute up to homotopy.   

Suppose first that for any cover of $\RR$ by open subsets $\RR=\bigcup_iU_i$ and for any element $f\in F_0$, there exists a lift as above.  Let $U_1=(-\infty,\frac{1}{2})$ and let $U_2=(0,\infty)$.  By assumption, either $f$ factors through $C^\infty(U_1)$ or through $C^\infty(U_2)$, and $1-f$ factors through the other by Lemma \ref{ope cov and fac}.  It is easy to show that any element of $\pi_0F$ which factors through $C^\infty(U_2)$ is invertible (using the fact that $0\not\in U_2$).  Hence, either $f$ or $1-f$ is invertible in $\pi_0F$, so $\pi_0F$ is a local $C^\infty$-ring.

Now suppose that $F$ is local, i.e. that it has a unique maximal ideal, and suppose $\RR=\bigcup_iU_i$ is an open cover.  Choose $f\in|F|_0$, considered as a map of $C^\infty$-rings $f\taking C^\infty(\RR)\to F$.  By \cite[3.8]{MR}, $\pi_0F$ has a unique point $F\to\pi_0F\to C^\infty(\RR^0)$. By Lemma \ref{ope cov and fac}, there exists $i$ such that the composition $C^\infty(\RR)\To{f} F\to C^\infty(\RR^0)$ factors through $C^\infty(\RR)[\chi_{U_i}^\m1]$, giving the solid arrow square $$\xymatrix{C^\infty(\RR)\ar[r]^f\ar[d]&F\ar[d]\\C^\infty(\RR)[\chi_{U_i}^\m1]\ar[r]\ar@{..>}[ur]&C^\infty(\RR^0).}$$  Since $\chi_{U_i}\in C^\infty(\RR)$ is not sent to $0\in C^\infty(\RR^0)$, its image $f(\chi_{U_i})$ is not contained in the maximal ideal of $\pi_0F$, so a dotted arrow lift exists making the diagram commute up to homotopy.  This proves the proposition.

\end{proof}

In the following theorem, we use the notation $|A|$ to denote the simplicial set $A(\RR)=\Map_\sC(C^\infty(\RR),A)$ underlying a simplicial $C^\infty$-ring $A\in\sC$.

\begin{theorem}\label{structure theorem}

Let $\mcX=(X,\mcO_X)$ be a local $C^\infty$-ringed space, and let $\i\RR=(\RR,C^\infty_\RR)$ denote the (image under $\i$ of the) real line.  There is a natural homotopy equivalence of simplicial sets $$\Map_\LRS((X,\mcO_X),(\RR,C^\infty_\RR))\To{\we}|\mcO_X(X)|.$$

\end{theorem}

\begin{proof}

We will construct morphisms \begin{align*}K\taking\Map_\LRS(\mcX,\i\RR)\to|\mcO_X(X)| &\tn{, and}\\ L\taking|\mcO_X(X)|\to\Map_\LRS(\mcX,\i\RR),\end{align*} and show that they are homotopy inverses.  For the reader's convenience, we recall the definition $$\Map_\LRS(\mcX,\RR)=\coprod_{f\taking X\to\RR}\Map_\loc(f^*C^\infty_\RR,\mcO_X).$$

The map $K$ is fairly easy and can be defined without use of the locality condition.  Suppose that $\phi\taking X\to\RR$ is a map of topological spaces.  The restriction of $K$ to the corresponding summand of $\Map_\LRS(\mcX,\i\RR)$ is given by taking global sections $$\Map_\loc(\phi^*C^\infty_\RR,\mcO_X)\to\Map(C^\infty(\RR),\mcO_X(X))\iso|\mcO_X(X)|.$$

To define $L$ is a bit harder and depends heavily on the assumption that $\mcO_X$ is a local sheaf on $X$.  First, given an $n$-simplex $g\in|\mcO_X(X)|_n$ we need to define a map of topological spaces $L(g)\taking X\to\RR$.  Let $g_0\in\pi_0|\mcO_X(X)|$ denote the connected component containing $g$.  By Proposition \ref{equiv locality cond}, $g_0$ gives rise to a function from open covers of $\RR$ to open covers of $X$, and this function commutes with refinement of open covers.  Since $\RR$ is Hausdorff, there is a unique map of topological spaces $X\to\RR$, which we take as $L(g)$, consistent with such a function.  Denote $L(g)$ by $G$, for ease of notation.

Now we need to define a map of sheaves of $C^\infty$-rings $$G^\flat\taking C^\infty_\RR\tensor\Delta^n\to G_*(\mcO_X).$$  On global sections, we have such a function already, since $g\in|\mcO_X(X)|_n$ can be considered as a map $g\taking C^\infty_\RR(\RR)\to|\mcO_X(X)|_n$.  Let $V\ss\RR$ denote an open subset and $g^\m1(V)\ss X$ its preimage under $g$.  The map $\rho\taking C^\infty_\RR(\RR)\to C^\infty_\RR(V)$ is a localization; hence, it is an epimorphism of $C^\infty$-rings.  

To define $G^\flat$, we will need to show that there exists a unique dotted arrow making the diagram \begin{eqnarray}\label{dia:structure}\xymatrix{C^\infty_\RR(\RR)\tensor\Delta^n\ar[r]^g\ar[d]_{\rho\tensor\Delta^n}&|\mcO_X(X)|\ar[d]\\C^\infty_\RR(V)\tensor\Delta^n\ar@{..>}[r]&|\mcO_X(g^\m1(V))|}\end{eqnarray} commute.  Such an arrow exists by definition of $g^\m1$.  It is unique because $\rho$ is an epimorphism of $C^\infty$-rings, so $\rho\tensor\Delta^n$ is as well.  We have now defined $G^\flat$, and we take $G^\sharp\taking G^*C^\infty_\RR\to\mcO_X$ to be the left adjoint of $G^\flat$.

We must show that for every $g\in|\mcO_X(X)|_0$, the map $G^\sharp\taking G^*C^\infty_\RR\to\mcO_X$ provided by $L$ is local.  (We can choose $g$ to be a vertex because, by definition, a simplex in $\Map(G^*C^\infty_\RR,\mcO_X)$ is local if all of its vertices are local.)  To prove this, we may take $X$ to be a point, $F=\mcO_X(X)$ a local $C^\infty$-ring, and $x=G(X)\in\RR$ the image point of $X$.  We have a morphism of $C^\infty$-rings $G^\sharp\taking(C^\infty_\RR)_x\to F$, in which both the domain and codomain are local.  It is a local ring homomorphism because all prime ideals in the local ring $(C^\infty_\RR)_x$ are maximal.  

The maps $K$ and $L$ have now been defined, and they are homotopy inverses by construction. 

\end{proof}

The following is a technical lemma that allows us to take homotopy limits component-wise in the model category of simplicial sets.

\begin{lemma}\label{holim lemma}

Let $\sSets$ denote the category of simplicial sets.  Let $I, J,$ and $K$ denote sets and let $$A=\coprod_{i\in I}A_i,\hspace{.4in} B=\coprod_{j\in J}B_j,\hspace{.2in} \tn{and}\hspace{.2in} C=\coprod_{k\in K}C_k$$ denote coproducts in $\sSets$ indexed by $I, J,$ and $K$.  

Suppose that $f\taking I\to J$ and $g\taking K\to J$ are functions and that $F\taking A\to B$ and $G\taking C\to B$ are maps in $\sSets$ that respect $f$ and $g$ in the sense that $F(A_i)\ss B_{f(i)}$ and $G(C_k)\ss B_{g(k)}$ for all $i\in I$ and $k\in K$.  Let $$I\cross_JK=\{(i,j,k)\in I\cross J\cross K \| f(i)=j=g(k)\}$$ denote the fiber product of sets.  For typographical reasons, we use $\cross^h$  to denote homotopy limit in $\sSets$, and $\cross$ to denote the 1-categorical limit.

Then the natural map $$\left(\coprod_{(i,j,k)\in I\cross_JK}A_i\cross^h_{B_j}C_k\right)\too A\cross^h_BC$$ is a weak equivalence in $\sSets$.

\end{lemma}

\begin{proof}

If we replace $F$ by a fibration, then each component $F_i:=F|_{A_i}\taking A_i\to B_{f(i)}$ is a fibration.  We reduce to showing that the map \begin{align}\label{dia:straight limit}\left(\coprod_{(i,j,k)\in I\cross_JK}A_i\cross_{B_j}C_k\right)\too A\cross_BC\end{align} is an isomorphism of simplicial sets.  Restricting to the $n$-simplicies of both sides, we may assume that $A,B,$ and $C$ are (discrete simplicial) sets.  It is an easy exercise to show that the map in (\ref{dia:straight limit}) is injective and surjective, i.e. an isomorphism in $\Sets$.

\end{proof}

\begin{proposition}\label{transverse intersections}

Suppose that $a:M_0\to M$ and $b:M_1\to M$ are morphisms of manifolds, and suppose that a fiber product $N$ exists in the category of manifolds.  If $\mcX=(X,\mcO_X)$ is the fiber product $$\hPull{\mcX}{M_0}{M_1}{M}{}{}{a}{b}$$ taken
in the category of derived manifolds, then the natural map $g\taking N\to\mcX$ is an equivalence if and only if $a$ and $b$ are transverse.

\end{proposition}

\begin{proof}

Since limits taken in $\Man$ and in $\dMan$ commute with taking underlying topological spaces, the map $N\to X$ is a homeomorphism.  We have a commutative diagram $$\xymatrix{N\ar[r]^g\ar[d]_f&\mcX\ar[dl]^{f'}\\M_0\cross M_1,}$$ in which $f$ and $f'$ are closed immersions (pullbacks of the diagonal $M\to M\cross M$).

If $a$ and $b$ are not transverse, one shows easily that the first homology group $H_1L_\mcX\neq 0$ of the cotangent complex for $\mcX$ is nonzero, whereas $H_1L_N=0$ because $N$ is a manifold; hence $\mcX$ is not equivalent to $N$.

If $a$ and $b$ are transverse, then one can show that $g$ induces a quasi-isomorphism $g^*L_{f'}\to L_f$.  By Lemma \ref{Nakayama}, the map $g$ is an equivalence of derived manifolds.

\end{proof}

\begin{proposition}\label{fin lim in lrs}

The simplicial category $\LRS$ of local $C^\infty$-ringed spaces is closed under taking finite homotopy limits.

\end{proposition}

\begin{proof}

The local $C^\infty$-ringed space $(\RR^0,C^\infty(\RR^0))$ is a homotopy terminal object in $\LRS$.  Hence it suffices to show that a homotopy limit exists for any diagram $$(X,\mcO_X)\From{F}(Y,\mcO_Y)\To{G}(Z,\mcO_Z)$$ in $\LRS$.  We first describe the appropriate candidate for this homotopy limit.

The underlying space of the candidate is $X\cross_YZ$, and we label the maps as in the diagram $$\xymatrix{X\cross_YZ\ar[dr]^h\ar[r]^g\ar[d]_f&Z\ar[d]^G\\ X\ar[r]_F&Y}$$  The structure sheaf on the candidate is the homotopy colimit of pullback sheaves \begin{eqnarray}\label{eqn:sheaf}\mcO_{X\cross_YZ}:=(g^*\mcO_Z)\tensor_{(h^*\mcO_Y)}(f^*\mcO_X).\end{eqnarray}

To show that $\mcO_{X\cross_YZ}$ is a sheaf of local $C^\infty$-rings, we take the stalk at a point, apply $\pi_0$, and show that it is a local $C^\infty$-ring.  The homotopy colimit written in Equation (\ref{eqn:sheaf}) becomes the $C^\infty$-tensor product of pointed local $C^\infty$-rings.  By \cite[3.12]{MR}, the result is indeed a local $C^\infty$-ring.

One shows that $(X\cross_YZ,\mcO_{X\cross_YZ})$ is the homotopy limit of the diagram in the usual way.  We do not prove it here, but refer the reader to \cite[2.3.21]{Spi} or, for a much more general result, to \cite[2.4.21]{Lur-DAG5}.

\end{proof}

\begin{theorem}\label{fin lim in dman}

Let $M$ be a manifold, let $\mcX$ and $\mcY$ be derived manifolds, and let $f\taking\mcX\to M$ and $g\taking\mcY\to M$ be morphisms of derived manifolds.  Then a fiber product $\mcX\cross_M\mcY$ exists in the category of derived manifolds.

\end{theorem}

\begin{proof}

We showed in Proposition \ref{fin lim in lrs} that $\mcX\cross_M\mcY$ exists as a local $C^\infty$-ringd space.  To show that it is a derived manifold, we must only show that it is locally an affine derived manifold.  This is a local property, so it suffices to look locally on $M$, $\mcX,$ and $\mcY$.  We will prove the result by first showing that affine derived manifolds are closed under taking products, then that they are closed under solving equations, and finally that these two facts combine to prove the result.

Given affine derived manifolds $\RR^n_{a=0}$ and $\RR^m_{b=0}$, it follows formally that $\RR^{n+m}_{(f,g)=0}$ is their product, and it is an affine derived manifold.

Now let $\mcX=\RR^n_{a=0}$, where $a\taking\RR^n\to\RR^m$, and suppose that $b\taking\mcX\to\RR^k$ is a morphism.  By Theorem \ref{structure theorem}, we can consider $b$ as an element of $\mcO_X(X)(\RR^k)$.  By Lemma \ref{pi_0 on sC}, it is homotopic to a composite $\mcX\to\RR^n\To{b'}\RR^k$, where $\mcX\to\RR^n$ is the canonical imbedding.  Now we can realize $\mcX_{b=0}$ as the homotopy limit in the all-Cartesian diagram $$\xymatrix{\mcX_{b=0}\ar[r]\ar[d]\ulhlimit&\RR^0\ar[d]\\\mcX\ar[r]^b\ar[d]\ulhlimit&\RR^k\ar[r]\ar[d]\ulhlimit&\RR^0\ar[d]\\\RR^n\ar[r]_-{(b',a)}&\RR^k\cross\RR^m\ar[r]&\RR^m.}$$  Therefore, $\mcX_{b=0}=\RR^n_{(b',a)=0}$ is affine.

Finally, suppose that $\mcX$ and $\mcY$ are affine and that $M=\RR^p$.  Let $-\taking\RR^p\cross\RR^p\to\RR^p$ denote the coordinate-wise subtraction map.  Then there is an all-Cartesian diagram $$\xymatrix{\mcX\cross_{\RR^p}\mcY\ar[r]\ar[d]\ulhlimit&\RR^p\ar[d]_{diag}\ar[r]\ulhlimit&\RR^0\ar[d]^0\\\mcX\cross\mcY\ar[r]&\RR^p\cross\RR^p\ar[r]_-{-}&\RR^p,}$$ where $diag\taking\RR^p\to\RR^p\cross\RR^p$ is the diagonal map.  We have seen that $\mcX\cross\mcY$ is affine, so since $\mcX\cross_{\RR^p}\mcY$ is the solution to an equation on an affine derived manifold, it too is affine.  This completes the proof.

\end{proof}

\begin{remark}

Note that Theorem \ref{fin lim in dman} {\em does not} say that the category $\dMan$ is closed under arbitrary fiber products.  Indeed, if $M$ is not assumed to be a smooth manifold, then the fiber product of derived manifolds over $M$ need not be a derived manifold in our sense.  The cotangent complex of any derived manifold has homology concentrated in degrees 0 and 1 (see Corollary \ref{euler of der}), whereas a fiber product of derived manifolds (over a non-smooth base) would not have that property.   

Of course, using the spectrum functor Spec, defined in Remark \ref{rmk:spec for cinf}, one could define a more general category $\mcC$ of ``derived manifolds" in the usual scheme-theoretic way.  Then our $\dMan$ would form a full subcategory of $\mcC$, which one might call the subcategory of {\em quasi-smooth} objects (see \cite{Spi}).  The reason we did not introduce this category $\mcC$ is that it does not have the general cup product formula in cobordism; i.e. Theorem \ref{main theorem} does not apply to $\mcC$.  

\end{remark}

\section{Derived manifolds are good for doing intersection theory}\label{gfdi}

Recall from Section \ref{cinf} that $H_{\RR^i}\in\sC$ is the discrete $C^\infty$-ring corepresenting $\RR^i\in\EE$.  A smooth map $f\taking\RR^i\to\RR^j$ (contravariantly) induces a morphism of $C^\infty$-rings $H_{\RR^j}\to H_{\RR^i}$, which we often denote by $f$ for convenience.  Recall also that in $\sC$, there is a canonical weak equivalence $H_{\RR^{i+j}}\To{\we}H_{\RR^i}\amalg H_{\RR^j}.$

Let $U\taking\sC\to\sR$ denote the ``underlying simplicial commutative ring" functor from Corollary \ref{we on und alg}.

\begin{lemma}\label{hadamard}

Let $m,n,p\in\NN$ with $p\leq m$.  Let $x\taking\RR^{n+m}\to\RR^m$ denote the projection onto the first $m$ coordinates, let $g\taking\RR^p\to\RR^m$ denote the inclusion of a $p$-plane in $\RR^m$, and let $\Psi$ be the diagram $$\xymatrix{H_{\RR^m}\ar[r]^x\ar[d]_g&H_{\RR^{n+m}}\\H_{\RR^p}}$$ of $C^\infty$-rings.  The homotopy colimit of $\Psi$ is weakly equivalent to $H_{\RR^{n+p}}$.  

Moreover, the application of $U\taking\sC\to\sR$ commutes with taking homotopy colimit of $\Psi$ in the sense that the natural map $$\hocolim(U\Psi)\to U\hocolim(\Psi)$$ is a weak equivalence of simplicial commutative $\RR$-algebras.

\end{lemma}

\begin{proof}

We may assume that $g$ sends the origin to the origin.  For now, we assume that $p=0$ and $m=1$.

Consider the all-Cartesian diagram of manifolds $$\xymatrix{\RR^n\ar[r]\ar[d]\ullimit&\RR^{n+1}\ar[r]\ar[d]_x\ullimit&\RR^n\ar[d]\\\RR^0\ar[r]_g&\RR\ar[r]&\RR^0.}$$  Apply $H\taking\Man\op\to\sC$ to obtain the diagram $$\xymatrix{H_{\RR^n}&H_{\RR^{n+1}}\ar[l]&H_{\RR^n}\ar[l]\\H_{\RR^0}\ar[u]&H_{\RR}\ar[l]_g\ar[u]^x&H_{\RR^0}\ar[u]\ar[l].}$$  Since $H$ sends products in $\E$ to homotopy pushouts in $\sC$, the right-hand square and the big rectangle are homotopy pushouts.  Hence the left square is as well, proving the first assertion (in case $p=0$ and $m=1$).

For notational reasons, let $U^i$ denote $U(H_{\RR^i})$, so that $U^i$ is the (discrete simplicial) commutative ring whose elements are smooth maps $\RR^i\to\RR$.
Define a simplicial commutative $\RR$-algebra $D$ as the homotopy colimit in the diagram $$\hPush{U^1}{U^{n+1}}{U^0}{D.}{x}{g}{}{}$$  We must show that the natural map $D\to U^n$ is a weak equivalence of simplicial commutative rings.  Recall that the homotopy groups of a homotopy pushout of simplicial commutative rings are the $\Tor$ groups of the corresponding tensor product of chain complexes.

Since $x$ is a nonzerodivisor in the ring $U^{n+1}$, the homotopy groups 
of $D$ are $$\pi_0(D)=U^{n+1}\tensor_{U^1}U^0; \tn{ and } \pi_i(D)=0, i>0.$$ Thus, $\pi_0(D)$ can be identified with the equivalence 
classes of smooth functions $f\taking\RR^{n+1}\to\RR$, where $f\sim g$ if $f-g$ is a multiple of $x$.

On the other hand, we can identify elements of the ring $U^n$ of smooth functions on $\RR^n$  with the equivalence classes of functions $f\taking\RR^{n+1}\to\RR$, where $f\sim g$ if 
$$f(0,x_2,x_3,\ldots x_{n+1})=g(0,x_2,x_3,\ldots,x_{n+1}).$$  Thus to prove that the map $D\to U^n$ is a weak 
equivalence, we must only show that if a smooth function $f(x_1,\ldots,x_{n+1})\taking\RR^{n+1}\to\RR$ vanishes whenever $x_1=0$, then $x_1$ 
divides $f$.  This is called Hadamard's lemma, and it follows from the definition of smooth functions.  Indeed, given a smooth function $f(x_1,\ldots,x_{n+1})\taking\RR^{n+1}\to\RR$ which vanishes whenever $x_1=0$, define a 
function $g\taking\RR^{n+1}\to\RR$ by the formula $$g(x_1,\ldots,x_{n+1})=\lim_{a\to x_1}\frac{f(a,x_2,x_3,\ldots,x_{n+1})}{a}.$$  It is clear 
that $g$ is smooth, and $xg=f$.  

We have now proved both assertions in the case that $p=0$ and $m=1$; we continue to assume $p=0$ and prove the result for general $m+1$ by induction.  The inductive step follows from the all-Cartesian diagram below, in which each vertical arrow is an inclusion of a plane and each horizontal arrow is a coordinate projection: $$\xymatrix{\RR^n\ar[r]\ar[d]\ullimit&\RR^0\ar[d]\\\RR^{m+n}\ar[r]\ar[d]\ullimit&\RR^m\ar[r]\ar[d]\ullimit&\RR^0\ar[d]\\\RR^{n+m+1}\ar[r]&\RR^{m+1}\ar[r]&\RR.}$$  Apply $H$ to this diagram.  The arguments above implies that the two assertions hold for the right square and bottom rectangle; thus they hold for the bottom left square.  The inductive hypothesis implies that the two assertions hold for the top square, so they hold for the left rectangle, as desired.

For the case of general $p$, let $k\taking\RR^m\to\RR^{m-p}$ denote the projection orthogonal to $g$.  Applying $H$ to the all-Cartesian diagram $$\xymatrix{\RR^{n+p}\ar[r]\ar[d]\ullimit&\RR^p\ar[r]\ar[d]^g\ullimit&\RR^0\ar[d]\\\RR^{n+m}\ar[r]_x&\RR^m\ar[r]_k&\RR^{m-p},}$$ the result holds for the right square and the big rectangle (for both of which $p=0$), so it holds for the left square as well.

\end{proof}

\begin{remark}

Note that (the non-formal part of) Lemma \ref{hadamard} relies heavily on the fact that we are dealing with smooth maps.  It is this lemma which fails in the setting of topological manifolds, piecewise linear manifolds, etc.  

\end{remark}

Let $\mcM$ be a model category, and let ${\bf \Delta}$ denote the simplicial indexing category.  Recall that a simplicial resolution of an object $X\in\mcM$ consists of a simplicial diagram $X'\res\taking{\bf\Delta}\op\to\mcM$ and an augmentation map $X'\res\to X$, such that the induced map $\hocolim(X')\to X$ is a weak equivalence in $\mcM$.  

Conversely, the geometric realization of a simplicial object $Y\res\taking{\bf\Delta}\op\to\mcM$ is the object in $\mcM$ obtained by taking the homotopy colimit of the diagram $Y\res$.  

\begin{proposition}\label{und pre geo rea}

The functors $U\taking\sC\to\sR$ and $-(\RR)\taking\sC\to\sSets$ each preserve geometric realizations.

\end{proposition}

\begin{proof}

By \cite[A.1]{Lur-DAG1}, the geometric realization of any simplicial object in any of the model categories $\sC$, $\sR$, and $\sSets$ is equivalent to the diagonal of 
the corresponding bisimplicial object.  Since both $U$ and $-(\RR)$ are functors which preserve the diagonal, they each commute with geometric realization.

\end{proof}

Let $-(\RR)\taking\sC\to\sSets$ denote the functor $F\mapsto F(\RR)$.  It is easy to see that $-(\RR)$ is a right Quillen functor.  Its left adjoint is $(-\tensor H_\RR)\taking K\mapsto K\tensor H_\RR$ (although note that $K\tensor H_\RR$ is not generally fibrant in $\sC$).  We call a $C^\infty$-ring {\em free} if it is in the essential image of this functor $-\tensor H_\RR$, and similarly, we call a morphism of free $C^\infty$-rings {\em free} if it is in the image of this functor.

Thus, a morphism $d'\taking H_{\RR^k}\to H_{\RR^\ell}$ is free if and only if it is induced by a function $d\taking\{1,\ldots,k\}\to\{1,\ldots,\ell\}$.  To make $d'$ explicit, we just need to provide a dual map $\RR^\ell\to\RR^k$; for each $1\leq i\leq k$, we supply the map $\RR^\ell\to\RR$ given by projection onto the $d(i)$ coordinate.  

If $K$ is a simplicial set such that each $K_i$ is a finite set with cardinality $|K_i|=n_i$, then $X:=K\tensor H_\RR$ is the simplicial $C^\infty$-ring with $X_i=H_{\RR^{n_i}}$, and for a map $[\ell]\to[k]$ in $\bf{\Delta}$, the structure map $X_k\to X_\ell$ is the free map defined by $K_k\to K_\ell$.

\begin{lemma}\label{fun sim res}

Let $X$ be a simplicial $C^\infty$-ring.  There exists a functorial simplicial resolution $X'\res\to X$ in which $X'_n$ is a free simplicial $C^\infty$-ring for each $n$.  

Moreover, if $f\taking X\to Y$ is a morphism of $C^\infty$-rings and $f'\res\taking X'\res\to Y'\res$ is the induced map on simplicial resolutions, then for each $n\in\NN$, the map $f'_n\taking X'_n\to Y'_n$ is a morphism of free $C^\infty$-rings.

\end{lemma}

\begin{proof}

Let $R$ denote the underlying simplicial set functor $-(\RR):\sC\to\sSets$, and let $F$ denote its left adjoint.  The 
comonad $FR$ gives rise to an augmented simplicial $C^\infty$-ring, 
$$\xymatrix@1{\Phi=\cdots\arrr{r}&FRFR(X)\arr{r}&FR(X)\ar[r]&X.}$$  By \cite[8.6.10]{Wei}, the induced augmented simplicial set 
$$\xymatrix@1{R\Phi=\cdots\arrr{r}&RFRFR(X)\arr{r}&RFR(X)\ar[r]&R(X)}$$ is a weak equivalence.  By Propositions \ref{we on und alg} and \ref{und pre geo rea}, $\Phi$ is a simplicial resolution.

The second assertion is clear by construction.

\end{proof}

In the following theorem, we use a basic fact about simplicial sets: if $g\taking F\to H$ is a fibration of simplicial sets and $\pi_0g$ is a surjection of sets, then for each $n\in\NN$ the function $g_n\taking F_n\to H_n$ is surjective.  This is proved using the left lifting property for the cone point inclusion $\Delta^0\to\Delta^{n+1}$.

\begin{theorem}\label{commuting functors}

Let $\Psi$ be a diagram $$\xymatrix{F\ar[r]^f\ar[d]_g&G\\H}$$ of cofibrant-fibrant $C^\infty$-rings.  Assume that $\pi_0g\taking\pi_0F\to\pi_0H$ is a surjection.  Then application of $U\taking\sC\to\sR$ commutes with taking the homotopy colimit of $\Psi$ in the sense that the natural map $$\hocolim(U\Psi)\to U\hocolim(\Psi)$$ is a weak equivalence of simplicial commutative rings.

\end{theorem}

\begin{proof}

We prove the result by using simplicial resolutions to reduce to the case proved in Lemma \ref{hadamard}.  We  begin with a series of replacements and simplifications of the diagram $\Psi$, each of which preserves both $\hocolim(U\Psi)$ and $U\hocolim(\Psi)$.  

First, replace $g$ with a fibration and $f$ with a cofibration.  Note that since $g$ is surjective on $\pi_0$ and is a fibration, it is surjective in each degree; note also that $f$ is injective in every degree.  

Next, replace the diagram by the simplicial resolution given by Lemma \ref{fun sim res}.  This is a diagram $H'\res\From{g'\res}F'\res\To{f'\res}G'\res$, which has the same homotopy colimit.  We can compute this homotopy colimit by first computing the homotopy colimits $\hocolim(H'_n\From{g'_n}F'_n\To{f'_n}G'_n)$ in each degree, and then taking the geometric realization.   Since $U$ preserves geometric realization (Lemma \ref{und pre geo rea}), we may assume that $\Psi$ is a diagram $H\From{g}F\To{f}G$, in which $F,G,$ and $H$ are free simplicial $C^\infty$-rings, and in which $g$ is surjective and $f$ is injective.  By performing another simplicial resolution, we may assume further that $F,G,$ and $H$ are discrete. 

A free $C^\infty$-ring is the filtered colimit of its finitely generated (free) sub-$C^\infty$-rings; hence we may assume that $F,G,$ and $H$ are finitely generated.  In other words, each is of the form $S\tensor H_\RR$, where $S$ is a finite set.  We are reduced to the case in which $\Psi$ is the diagram $$\xymatrix{H_{\RR^m}\ar[r]^f\ar[d]_g&H_{\RR^n}\\ H_{\RR^p},}$$ where again $g$ is surjective and $f$ is injective.  Since $f$ and $g$ are free maps, they are induced by maps of sets $f_1\taking\{1,\ldots m\}\inj\{1,\ldots,n\}$ and $g_1\taking\{1,\ldots,m\}\to\{1,\ldots,p\}$.  The map $\RR^n\to\RR^m$ underlying $f$ is given by $$(a_1,\ldots,a_n)\mapsto (a_{f(1)},\ldots,a_{f(m)}),$$ which is a projection onto a coordinate plane through the origin, and we may arrange that it is a projection onto the first $m$ coordinates.  The result now follows from Lemma \ref{hadamard}.

\end{proof}

\begin{corollary}\label{com fun for imb}

Suppose that the diagram $$\xymatrix{(A,\mcO_A)\ar[r]^{G}\ar[d]_{F}\ulhlimit&(Y,\mcO_Y)\ar[d]^f\\(X,\mcO_X)\ar[r]_g&(Z,\mcO_Z)}$$ is a homotopy pullback of local $C^\infty$-ringed spaces.  If $g$ is an imbedding then the underlying diagram of ringed spaces is also a homotopy pullback.

\end{corollary}

\begin{proof}

In both the context of local $C^\infty$-ringed spaces and local ringed spaces, the space $A$ is the pullback of the diagram $X\to Z\from Y$.  The sheaf on $A$ is the homotopy colimit of the diagram $$F^*\mcO_X\From{F^*(g^\sharp)} F^*g^*\mcO_Z\To{G^*(f^\sharp)} G^*\mcO_Y,$$ either as a sheaf of $C^\infty$-rings or as a sheaf of simplicial commutative rings.  Note that taking inverse-image sheaves commutes with taking underlying simplicial commutative rings.

Since $g$ is an imbedding, we have seen that $g^\sharp\taking g^*\mcO_Z\to\mcO_X$ is surjective on $\pi_0$, and thus so is its pullback $F^*(g^\sharp)$.  The result now follows from Theorem \ref{commuting functors}.

\end{proof}

We will now give another way of viewing the ``locality condition" on ringed spaces.  Considering sections of the structure sheaf as functions to affine space, a ringed space is local if these functions pull covers back to covers.  This point of view can be found in \cite{SGA4} and \cite{BD}, for example.

Recall the notation $|F|=F(\RR)$ for a $C^\infty$-ring $F$; see Notation \ref{notation:abs val}.  Recall also that $C^\infty(\RR)$ denotes the free (discrete) $C^\infty$-ring on one generator.

\begin{definition}\label{inverse image}

Let $X$ be a topological space and $\mcF$ a sheaf of $C^\infty$-rings on $X$.  Given an open set $U\ss\RR$, let $\chi_U\taking\RR\to\RR$ denote a characteristic function of $U$ (i.e. $\chi_U$ vanishes precisely on $\RR-U$).  Let $f\in|\mcF(X)|_0$ denote a global section.  We will say that an open subset $V\ss X$ is {\em contained in the preimage under $f$ of $U$} if there exists a dotted arrow making the diagram of $C^\infty$-rings \begin{eqnarray}\label{dia:preimage}\xymatrix{C^\infty(\RR)\ar[r]^f\ar[d]&\mcF(X)\ar[d]^{\rho_{X,V}}\\ C^\infty(\RR)[\chi_U^\m1]\ar@{..>}[r]&\mcF(V)}\end{eqnarray} commute up to homotopy.  We say that $V$ {\em is the preimage under $f$ of $U$}, and by abuse of notation write $V=f^\m1(U)$, if it is maximal with respect to being contained in the preimage.  Note that these notions are independent of the choice of characteristic function $\chi_U$ for a given $U\ss\RR$.  Note also that since localization is an epimorphism of $C^\infty$-rings (as it is for ordinary commutative rings; see \cite[2.2,2.6]{MR}), the dotted arrow is unique if it exists.  

If $f,g\in|\mcF(X)|_0$ are homotopic vertices, then for any open subset $U\ss\RR$, one has $f^\m1(U)=g^\m1(U)\ss X$.  Therefore, this preimage functor is well-defined on the set of connected components $\pi_0|\mcF(X)|$.  Furthermore, if $f\in|\mcF(X)|_n$ is any simplex, all of its vertices are found in the same connected component, roughly denoted $\pi_0(f)\in\pi_0|\mcF(X)|$, so we can write $f^\m1(U)$ to denote $\pi_0(f)^\m1(U)$.

\end{definition}

\begin{example}

The above definition can be understood from the viewpoint of algebraic geometry.  Given a scheme $(X,\mcO_X)$ and a global section $f\in\mcO_X(X)=\pi_0\mcO_X(X)$, one can consider $f$ as a scheme morphism from $X$ to the affine line $\AA^1$.  Given a principle open subset $U=\Spec(k[x][g^\m1])\ss\AA^1$, we are interested in its preimage in $X$.  This preimage will be the largest $V\ss X$ on which the map $k[x]\To{f}\mcO_X(X)$ can be lifted to a map $k[x][g^\m1]\to\mcO_X(V)$.  This is the content of Diagram \ref{dia:preimage}.

Up next, we will give an alternate formulation of the condition that a sheaf of $C^\infty$-rings be a local in terms of these preimages.  In the algebro-geometric setting, it comes down to the fact that a sheaf of rings $\mcF$ is a sheaf of {\em local rings} on $X$ if and only if, for every global section $f\in\mcF(X)$, the preimages under $f$ of a cover of $\Spec(k[x])$ form a cover of $X$.  

\end{example}

\begin{lemma}\label{ope cov and fac}

Suppose that $U\ss\RR$ is an open subset of $\RR$, and let $\chi_U$ denote a characteristic function for $U$.  Then $C^\infty(U)=C^\infty(\RR)[\chi_U^\m1]$.  

There is a natural bijection between the set of points $p\in\RR$ and the set of $C^\infty$-functions $A_p\taking C^\infty(\RR)\to C^\infty(\RR^0)$.  Under this correspondence, $p$ is in $U$ if and only if $A_p$ factors through $C^\infty(\RR)[\chi_U^\m1]$.

\end{lemma}

\begin{proof}

This follows from \cite[1.5 and 1.6]{MR}.

\end{proof}

Recall that $F\in\sC$ is said to be a local $C^\infty$-ring if the commutative ring underlying $\pi_0F$ is a local ring (see Definition \ref{local cinf rings}).

\begin{proposition}\label{equiv locality cond}

Let $X$ be a space and $\mcF$ a sheaf of finitely presented $C^\infty$-rings on $X$.  Then $\mcF$ is local if and only if, for any cover of $\RR$ by open subsets $\RR=\bigcup_iU_i$ and for any open $V\ss X$ and local section $f\in\pi_0|\mcF(V)|$, the preimages $f^\m1(U_i)$ form a cover of $V$.

\end{proposition}

\begin{proof}

Choose a representative for $f\in\pi_0|F|$, call it $f\in|F|_0$ for simplicity, and recall that it can be identified with a map $f\taking C^\infty(\RR)\to F$, which is unique up to homotopy.

Both the property of being a sheaf of local $C^\infty$-rings and the above ``preimage of a covering is a covering" property is local on $X$.  Thus we may assume that $X$ is a point.  We are reduced to proving that a $C^\infty$-ring $F$ is a local $C^\infty$-ring if and only if, for any cover of $\RR$ by open sets $\RR=\bigcup_iU_i$ and for any element $f\in|F|_0$, there exists an index $i$ and a dotted arrow making the diagram $$\xymatrix{C^\infty(\RR)\ar[d]\ar[r]^f&F\\C^\infty(\RR)[\chi_{U_i}^\m1]\ar@{..>}[ur]&}$$ commute up to homotopy.   

Suppose first that for any cover of $\RR$ by open subsets $\RR=\bigcup_iU_i$ and for any element $f\in F_0$, there exists a lift as above.  Let $U_1=(-\infty,\frac{1}{2})$ and let $U_2=(0,\infty)$.  By assumption, either $f$ factors through $C^\infty(U_1)$ or through $C^\infty(U_2)$, and $1-f$ factors through the other by Lemma \ref{ope cov and fac}.  It is easy to show that any element of $\pi_0F$ which factors through $C^\infty(U_2)$ is invertible (using the fact that $0\not\in U_2$).  Hence, either $f$ or $1-f$ is invertible in $\pi_0F$, so $\pi_0F$ is a local $C^\infty$-ring.

Now suppose that $F$ is local, i.e. that it has a unique maximal ideal, and suppose $\RR=\bigcup_iU_i$ is an open cover.  Choose $f\in|F|_0$, considered as a map of $C^\infty$-rings $f\taking C^\infty(\RR)\to F$.  By \cite[3.8]{MR}, $\pi_0F$ has a unique point $F\to\pi_0F\to C^\infty(\RR^0)$. By Lemma \ref{ope cov and fac}, there exists $i$ such that the composition $C^\infty(\RR)\To{f} F\to C^\infty(\RR^0)$ factors through $C^\infty(\RR)[\chi_{U_i}^\m1]$, giving the solid arrow square $$\xymatrix{C^\infty(\RR)\ar[r]^f\ar[d]&F\ar[d]\\C^\infty(\RR)[\chi_{U_i}^\m1]\ar[r]\ar@{..>}[ur]&C^\infty(\RR^0).}$$  Since $\chi_{U_i}\in C^\infty(\RR)$ is not sent to $0\in C^\infty(\RR^0)$, its image $f(\chi_{U_i})$ is not contained in the maximal ideal of $\pi_0F$, so a dotted arrow lift exists making the diagram commute up to homotopy.  This proves the proposition.

\end{proof}

In the following theorem, we use the notation $|A|$ to denote the simplicial set $A(\RR)=\Map_\sC(C^\infty(\RR),A)$ underlying a simplicial $C^\infty$-ring $A\in\sC$.

\begin{theorem}\label{structure theorem}

Let $\mcX=(X,\mcO_X)$ be a local $C^\infty$-ringed space, and let $\i\RR=(\RR,C^\infty_\RR)$ denote the (image under $\i$ of the) real line.  There is a natural homotopy equivalence of simplicial sets $$\Map_\LRS((X,\mcO_X),(\RR,C^\infty_\RR))\To{\we}|\mcO_X(X)|.$$

\end{theorem}

\begin{proof}

We will construct morphisms \begin{align*}K\taking\Map_\LRS(\mcX,\i\RR)\to|\mcO_X(X)| &\tn{, and}\\ L\taking|\mcO_X(X)|\to\Map_\LRS(\mcX,\i\RR),\end{align*} and show that they are homotopy inverses.  For the reader's convenience, we recall the definition $$\Map_\LRS(\mcX,\RR)=\coprod_{f\taking X\to\RR}\Map_\loc(f^*C^\infty_\RR,\mcO_X).$$

The map $K$ is fairly easy and can be defined without use of the locality condition.  Suppose that $\phi\taking X\to\RR$ is a map of topological spaces.  The restriction of $K$ to the corresponding summand of $\Map_\LRS(\mcX,\i\RR)$ is given by taking global sections $$\Map_\loc(\phi^*C^\infty_\RR,\mcO_X)\to\Map(C^\infty(\RR),\mcO_X(X))\iso|\mcO_X(X)|.$$

To define $L$ is a bit harder and depends heavily on the assumption that $\mcO_X$ is a local sheaf on $X$.  First, given an $n$-simplex $g\in|\mcO_X(X)|_n$ we need to define a map of topological spaces $L(g)\taking X\to\RR$.  Let $g_0\in\pi_0|\mcO_X(X)|$ denote the connected component containing $g$.  By Proposition \ref{equiv locality cond}, $g_0$ gives rise to a function from open covers of $\RR$ to open covers of $X$, and this function commutes with refinement of open covers.  Since $\RR$ is Hausdorff, there is a unique map of topological spaces $X\to\RR$, which we take as $L(g)$, consistent with such a function.  Denote $L(g)$ by $G$, for ease of notation.

Now we need to define a map of sheaves of $C^\infty$-rings $$G^\flat\taking C^\infty_\RR\tensor\Delta^n\to G_*(\mcO_X).$$  On global sections, we have such a function already, since $g\in|\mcO_X(X)|_n$ can be considered as a map $g\taking C^\infty_\RR(\RR)\to|\mcO_X(X)|_n$.  Let $V\ss\RR$ denote an open subset and $g^\m1(V)\ss X$ its preimage under $g$.  The map $\rho\taking C^\infty_\RR(\RR)\to C^\infty_\RR(V)$ is a localization; hence, it is an epimorphism of $C^\infty$-rings.  

To define $G^\flat$, we will need to show that there exists a unique dotted arrow making the diagram \begin{eqnarray}\label{dia:structure}\xymatrix{C^\infty_\RR(\RR)\tensor\Delta^n\ar[r]^g\ar[d]_{\rho\tensor\Delta^n}&|\mcO_X(X)|\ar[d]\\C^\infty_\RR(V)\tensor\Delta^n\ar@{..>}[r]&|\mcO_X(g^\m1(V))|}\end{eqnarray} commute.  Such an arrow exists by definition of $g^\m1$.  It is unique because $\rho$ is an epimorphism of $C^\infty$-rings, so $\rho\tensor\Delta^n$ is as well.  We have now defined $G^\flat$, and we take $G^\sharp\taking G^*C^\infty_\RR\to\mcO_X$ to be the left adjoint of $G^\flat$.

We must show that for every $g\in|\mcO_X(X)|_0$, the map $G^\sharp\taking G^*C^\infty_\RR\to\mcO_X$ provided by $L$ is local.  (We can choose $g$ to be a vertex because, by definition, a simplex in $\Map(G^*C^\infty_\RR,\mcO_X)$ is local if all of its vertices are local.)  To prove this, we may take $X$ to be a point, $F=\mcO_X(X)$ a local $C^\infty$-ring, and $x=G(X)\in\RR$ the image point of $X$.  We have a morphism of $C^\infty$-rings $G^\sharp\taking(C^\infty_\RR)_x\to F$, in which both the domain and codomain are local.  It is a local ring homomorphism because all prime ideals in the local ring $(C^\infty_\RR)_x$ are maximal.  

The maps $K$ and $L$ have now been defined, and they are homotopy inverses by construction. 

\end{proof}

The following is a technical lemma that allows us to take homotopy limits component-wise in the model category of simplicial sets.

\begin{lemma}\label{holim lemma}

Let $\sSets$ denote the category of simplicial sets.  Let $I, J,$ and $K$ denote sets and let $$A=\coprod_{i\in I}A_i,\hspace{.4in} B=\coprod_{j\in J}B_j,\hspace{.2in} \tn{and}\hspace{.2in} C=\coprod_{k\in K}C_k$$ denote coproducts in $\sSets$ indexed by $I, J,$ and $K$.  

Suppose that $f\taking I\to J$ and $g\taking K\to J$ are functions and that $F\taking A\to B$ and $G\taking C\to B$ are maps in $\sSets$ that respect $f$ and $g$ in the sense that $F(A_i)\ss B_{f(i)}$ and $G(C_k)\ss B_{g(k)}$ for all $i\in I$ and $k\in K$.  Let $$I\cross_JK=\{(i,j,k)\in I\cross J\cross K \| f(i)=j=g(k)\}$$ denote the fiber product of sets.  For typographical reasons, we use $\cross^h$  to denote homotopy limit in $\sSets$, and $\cross$ to denote the 1-categorical limit.

Then the natural map $$\left(\coprod_{(i,j,k)\in I\cross_JK}A_i\cross^h_{B_j}C_k\right)\too A\cross^h_BC$$ is a weak equivalence in $\sSets$.

\end{lemma}

\begin{proof}

If we replace $F$ by a fibration, then each component $F_i:=F|_{A_i}\taking A_i\to B_{f(i)}$ is a fibration.  We reduce to showing that the map \begin{align}\label{dia:straight limit}\left(\coprod_{(i,j,k)\in I\cross_JK}A_i\cross_{B_j}C_k\right)\too A\cross_BC\end{align} is an isomorphism of simplicial sets.  Restricting to the $n$-simplicies of both sides, we may assume that $A,B,$ and $C$ are (discrete simplicial) sets.  It is an easy exercise to show that the map in (\ref{dia:straight limit}) is injective and surjective, i.e. an isomorphism in $\Sets$.

\end{proof}

\begin{proposition}\label{transverse intersections}

Suppose that $a:M_0\to M$ and $b:M_1\to M$ are morphisms of manifolds, and suppose that a fiber product $N$ exists in the category of manifolds.  If $\mcX=(X,\mcO_X)$ is the fiber product $$\hPull{\mcX}{M_0}{M_1}{M}{}{}{a}{b}$$ taken
in the category of derived manifolds, then the natural map $g\taking N\to\mcX$ is an equivalence if and only if $a$ and $b$ are transverse.

\end{proposition}

\begin{proof}

Since limits taken in $\Man$ and in $\dMan$ commute with taking underlying topological spaces, the map $N\to X$ is a homeomorphism.  We have a commutative diagram $$\xymatrix{N\ar[r]^g\ar[d]_f&\mcX\ar[dl]^{f'}\\M_0\cross M_1,}$$ in which $f$ and $f'$ are closed immersions (pullbacks of the diagonal $M\to M\cross M$).

If $a$ and $b$ are not transverse, one shows easily that the first homology group $H_1L_\mcX\neq 0$ of the cotangent complex for $\mcX$ is nonzero, whereas $H_1L_N=0$ because $N$ is a manifold; hence $\mcX$ is not equivalent to $N$.

If $a$ and $b$ are transverse, then one can show that $g$ induces a quasi-isomorphism $g^*L_{f'}\to L_f$.  By Lemma \ref{Nakayama}, the map $g$ is an equivalence of derived manifolds.

\end{proof}

\begin{proposition}\label{fin lim in lrs}

The simplicial category $\LRS$ of local $C^\infty$-ringed spaces is closed under taking finite homotopy limits.

\end{proposition}

\begin{proof}

The local $C^\infty$-ringed space $(\RR^0,C^\infty(\RR^0))$ is a homotopy terminal object in $\LRS$.  Hence it suffices to show that a homotopy limit exists for any diagram $$(X,\mcO_X)\From{F}(Y,\mcO_Y)\To{G}(Z,\mcO_Z)$$ in $\LRS$.  We first describe the appropriate candidate for this homotopy limit.

The underlying space of the candidate is $X\cross_YZ$, and we label the maps as in the diagram $$\xymatrix{X\cross_YZ\ar[dr]^h\ar[r]^g\ar[d]_f&Z\ar[d]^G\\ X\ar[r]_F&Y}$$  The structure sheaf on the candidate is the homotopy colimit of pullback sheaves \begin{eqnarray}\label{eqn:sheaf}\mcO_{X\cross_YZ}:=(g^*\mcO_Z)\tensor_{(h^*\mcO_Y)}(f^*\mcO_X).\end{eqnarray}

To show that $\mcO_{X\cross_YZ}$ is a sheaf of local $C^\infty$-rings, we take the stalk at a point, apply $\pi_0$, and show that it is a local $C^\infty$-ring.  The homotopy colimit written in Equation (\ref{eqn:sheaf}) becomes the $C^\infty$-tensor product of pointed local $C^\infty$-rings.  By \cite[3.12]{MR}, the result is indeed a local $C^\infty$-ring.

One shows that $(X\cross_YZ,\mcO_{X\cross_YZ})$ is the homotopy limit of the diagram in the usual way.  We do not prove it here, but refer the reader to \cite[2.3.21]{Spi} or, for a much more general result, to \cite[2.4.21]{Lur-DAG5}.

\end{proof}

\begin{theorem}\label{fin lim in dman}

Let $M$ be a manifold, let $\mcX$ and $\mcY$ be derived manifolds, and let $f\taking\mcX\to M$ and $g\taking\mcY\to M$ be morphisms of derived manifolds.  Then a fiber product $\mcX\cross_M\mcY$ exists in the category of derived manifolds.

\end{theorem}

\begin{proof}

We showed in Proposition \ref{fin lim in lrs} that $\mcX\cross_M\mcY$ exists as a local $C^\infty$-ringd space.  To show that it is a derived manifold, we must only show that it is locally an affine derived manifold.  This is a local property, so it suffices to look locally on $M$, $\mcX,$ and $\mcY$.  We will prove the result by first showing that affine derived manifolds are closed under taking products, then that they are closed under solving equations, and finally that these two facts combine to prove the result.

Given affine derived manifolds $\RR^n_{a=0}$ and $\RR^m_{b=0}$, it follows formally that $\RR^{n+m}_{(f,g)=0}$ is their product, and it is an affine derived manifold.

Now let $\mcX=\RR^n_{a=0}$, where $a\taking\RR^n\to\RR^m$, and suppose that $b\taking\mcX\to\RR^k$ is a morphism.  By Theorem \ref{structure theorem}, we can consider $b$ as an element of $\mcO_X(X)(\RR^k)$.  By Lemma \ref{pi_0 on sC}, it is homotopic to a composite $\mcX\to\RR^n\To{b'}\RR^k$, where $\mcX\to\RR^n$ is the canonical imbedding.  Now we can realize $\mcX_{b=0}$ as the homotopy limit in the all-Cartesian diagram $$\xymatrix{\mcX_{b=0}\ar[r]\ar[d]\ulhlimit&\RR^0\ar[d]\\\mcX\ar[r]^b\ar[d]\ulhlimit&\RR^k\ar[r]\ar[d]\ulhlimit&\RR^0\ar[d]\\\RR^n\ar[r]_-{(b',a)}&\RR^k\cross\RR^m\ar[r]&\RR^m.}$$  Therefore, $\mcX_{b=0}=\RR^n_{(b',a)=0}$ is affine.

Finally, suppose that $\mcX$ and $\mcY$ are affine and that $M=\RR^p$.  Let $-\taking\RR^p\cross\RR^p\to\RR^p$ denote the coordinate-wise subtraction map.  Then there is an all-Cartesian diagram $$\xymatrix{\mcX\cross_{\RR^p}\mcY\ar[r]\ar[d]\ulhlimit&\RR^p\ar[d]_{diag}\ar[r]\ulhlimit&\RR^0\ar[d]^0\\\mcX\cross\mcY\ar[r]&\RR^p\cross\RR^p\ar[r]_-{-}&\RR^p,}$$ where $diag\taking\RR^p\to\RR^p\cross\RR^p$ is the diagonal map.  We have seen that $\mcX\cross\mcY$ is affine, so since $\mcX\cross_{\RR^p}\mcY$ is the solution to an equation on an affine derived manifold, it too is affine.  This completes the proof.

\end{proof}

\begin{remark}

Note that Theorem \ref{fin lim in dman} {\em does not} say that the category $\dMan$ is closed under arbitrary fiber products.  Indeed, if $M$ is not assumed to be a smooth manifold, then the fiber product of derived manifolds over $M$ need not be a derived manifold in our sense.  The cotangent complex of any derived manifold has homology concentrated in degrees 0 and 1 (see Corollary \ref{euler of der}), whereas a fiber product of derived manifolds (over a non-smooth base) would not have that property.   

Of course, using the spectrum functor Spec, defined in Remark \ref{rmk:spec for cinf}, one could define a more general category $\mcC$ of ``derived manifolds" in the usual scheme-theoretic way.  Then our $\dMan$ would form a full subcategory of $\mcC$, which one might call the subcategory of {\em quasi-smooth} objects (see \cite{Spi}).  The reason we did not introduce this category $\mcC$ is that it does not have the general cup product formula in cobordism; i.e. Theorem \ref{main theorem} does not apply to $\mcC$.  

\end{remark}

\section{Relationship to similar work}\label{Lurie}

In this section we discuss other research which is in some way related to the present paper.  Most relevant is Section \ref{subsec:lurie}, in which we discuss the relationship to Lurie's work on derived algebraic geometry, and in particular to structured spaces.  The other sections discuss manifolds with singularities, Chen spaces and diffeological spaces, synthetic differential geometry, and a catch-all section to concisely state how one might orient our work within the canon.

\subsection{Lurie's Structured spaces}\label{subsec:lurie}

There is a version of the above work on derived manifolds, presented in the author's PhD dissertation \cite{Spi}, which very closely follows Jacob Lurie's theory of structured spaces, as presented in \cite{Lur-DAG5}.  Similar in spirit is Toen and Vezzosi's work \cite{TV1}, \cite{TV2} on homotopical algebraic geometry.  In this section we attempt to orient the reader to Lurie's theory of structured spaces.

In order to define structured spaces, Lurie begins with the definition of a {\em geometry}.  A geometry is an $\infty$-category, equipped with a given choice of ``admissible" morphisms that generate a Grothendieck topology, and satisfying certain conditions.  For example, there is a geometry whose underlying category is the category of affine schemes $\Spec R$, with admissible morphisms given by principal open sets $\Spec R[a^\m1]\to\Spec R$, and with the usual Grothendieck topology of open coverings.  

Given a geometry $G$ and a topological space $X$, a $G$-structure on $X$ is roughly a functor $\mcO_X\taking G\to\Shv(X)$ which preserves finite limits and sends covering sieves on $G$ to effective epimorphisms in $\Shv(X)$.  

One should visualize the objects of $G$ as spaces and the admissible morphisms in $G$ as open inclusions.  In this visualization, a $G$-structure on $X$ provides, for each ``space" $g\in G$, a sheaf $\mcO_X(g)$, whose sections are seen as ``maps" from $X$ to $g$.  Since a map from $X$ to a limit of $g$'s is a limit of maps, one sees immediately why we require $\mcO_X$ to be left exact.  Given a covering sieve in $G$, we want to be able to say that to give a map from $X$ to the union of the cover is accomplished by giving local maps to the pieces of the cover, such that these maps agree on overlaps.  This is the covering sieve condition in the definition of $G$-structure.

Our approach follows Lurie's in spirit, but not in practice.  The issue is in his definition \cite[1.2.1]{Lur-DAG5} of admissibility structure, which we recall here.  

\begin{definition}

Let $\mcG$ be an $\infty$-category.  An {\em admissibility structure} on $\mcG$ consists of the following data: \begin{enumerate}[(1)]\item A subcategory $\mcG^{\tn{ad}}\ss\mcG$, containing every object of $\mcG$.  Morphisms of $\mcG$ which belong to $\mcG^{\tn{ad}}$ will be called {\em admissible} morphisms in $\mcG$.\item A Grothendieck topology on $\mcG^\tn{ad}$.\end{enumerate}  These data are required to satisfy the following conditions: \begin{enumerate}[(i)] \item Let $f\taking U\to X$ be an admissible morphism in $\mcG$, and g$\taking X'\to X$ any morphism.  Then there exists a pullback diagram $$\xymatrix{U'\ar[r]\ar[d]_{f'}&U\ar[d]^f\\X'\ar[r]_g&X,}$$ where $f'$ is admissible.\item Suppose given a commutative triangle $$\xymatrix{&Y\ar[dr]^g&\\X\ar[rr]_h\ar[ur]^f&&Z}$$ in $\mcG$, where $g$ and $h$ are admissible.  Then $f$ is admissible.\item Every retract of an admissible morphism of $\mcG$ is admissible.\end{enumerate}

\end{definition}

In our case, the role of $\mcG$ is played by $\EE$, the category of Euclidean spaces, and the role of $\mcG^\tn{ad}$ is played by open inclusions $\RR^n\inj\RR^n$ (see Sections \ref{cinf} and \ref{lrs}).  But, as such, $\mcG^\tn{ad}$ is not an admissibility structure on $\mcG$ because it does not satisfy condition (i): the pullback of a Euclidean open subset is not necessarily Euclidean. 

However such a pullback is {\em locally} Euclidean.  This should be enough to define something like a ``pre-admissibility structure," which does the same job as an admissibility structure.  In private correspondence, Lurie told me that such a notion would be useful -- perhaps this issue will be resolved in a later version of \cite{Lur-DAG5}.

In the author's dissertation, however, we did not use $C^\infty$-rings as our basic algebraic objects.  Instead, we used something called ``smooth rings."  A smooth ring is a functor $\Man\to\sSets$ which preserves pullbacks along submersions (see \cite[Definition 2.1.3]{Spi}).  In this case, the role of $\mcG$ is played by $\Man$, and the role of $\mcG^\tn{ad}$ is played by submersions.  This is an admissibility structure in Lurie's sense, and it should not be hard to prove that the category of structured spaces one obtains in this case is equivalent to our category of local $C^\infty$-ringed spaces.

\subsection{Manifolds with singularities}

A common misconception about derived manifolds is that every singular homology class $a\in H_*(M,\ZZ)$ of a manifold $M$ should be representable by an oriented derived manifold.  The misconception seems to arise from the idea that manifolds with singularities, objects obtained by ``coning off a submanifold of $M$," should be examples of derived manifolds.  This is not the case. 

By the phrase ``coning off a submanifold $A\ss M$," one means taking the colimit of a diagram $\{*\}\from A\to M$.  Derived manifolds are not closed under taking arbitrary colimits, e.g. quotients.  In particular, one cannot naturally obtain a derived manifold structure on a manifold with singularities.  Instead, one obtains derived manifolds from taking the zero-set of a section of a smooth vector bundle: see Example \ref{main examples}.  The collection of derived manifolds is quite large, but it does not include manifolds with singularities in a natural way.  In order to obtain arbitrary colimits, perhaps one should consider stacks on derived manifolds, but we have not worked out this idea.

\subsection{Chen spaces, diffeological spaces}

Another common generalization of the category of manifolds was invented by Kuo Tsai Chen in \cite{che}.  Let $Conv$ denote the category whose objects are convex subsets of $\RR^\infty$, and whose morphisms are smooth maps.  With the Grothendieck topology in which coverings are given by open covers in the usual sense, we can define the topos $\Shv(Conv)$.  The category of Chen spaces is roughly this topos, the difference being that points are given more importance in Chen spaces than in $\Shv(Conv)$, in a sense known as {\em concreteness} (see \cite{BH} for a precise account).  Diffeological spaces are similar -- they are defined roughly as sheaves on the category of open subsets of Euclidean spaces.

The difference between these approaches and our own is that Chen spaces are based on ``maps in" to the object in question (the Chen space) whereas our objects carry information about ``maps out" of the object in question (the derived manifold).  In other words, the simplest question one can ask about a Chen space $X$ is ``what are the maps from $\RR^n$ to $X$?"  Since $X$ is a sheaf on a site in which $\RR^n$ is an object, the answer is simply $X(\RR^n)$.  In the case of derived manifolds, the simplest question one can ask is ``what are the maps from $X$ to $\RR^n$?"  By the structure theorem, Theorem \ref{structure theorem}, information about maps from $X$ to Euclidean spaces is carried by the structure sheaf $\mcO_X$ -- the answer to the question is $\mcO_X(X)^n.$

If one is interested in generalizing manifolds to better study maps in to them, one should probably use Chen spaces or diffeological spaces.  In our case, we were interested in cohomological properties (intersection theory and cup product); since elements of cohomology on $X$ are determined by maps out of $X$, we constructed our generalized manifolds to be well-behaved with respect to maps out.  It may be possible to generalize further and talk about ``derived Chen spaces," but we have not yet pursued this idea.

\subsection{Synthetic differential geometry}

In the book \cite{MR}, Moerdijk and Reyes discuss yet another generalization of manifolds, called smooth functors.  These are functors from the category of (discrete) $C^\infty$-rings to sets that satisfy a descent condition (see \cite[3.1.1]{MR}).  A smooth functor can be considered as a patching of local neighborhoods, each of which is a formal $C^\infty$-variety.  

Neither our setup nor theirs is more general than the other.  While both are based on $C^\infty$-rings, our approach uses homotopical ideas, whereas theirs does not; their approach gives a topos, whereas ours does not.  It certainly may be possible to combine these ideas into ``derived smooth functors," but we have not pursued this idea either.   The non-homotopical approach does not seem adequate for a general cup product formula in the sense of Definition \ref{general cup}.

\subsection{Other similar work}

There has been far too much written about the intersection theory of manifolds for us to list here.  In our work, we achieve an intersection pairing at the level of spaces: the intersection of two submanifolds is still a geometric object (i.e. an object in a geometric category in the sense of Definition \ref{def geo}), and this geometric object has an appropriate fundamental class in cohomology (see Definition \ref{general cup}).  No ``general position" requirements are necessary.  We hope that this is enough to distinguish our results from previous ones.

\bibliographystyle{abbrv}

\begin{thebibliography}{10}

\bibitem{And}
M.~Andr{\'e}.
\newblock {\em M\'ethode simpliciale en alg\`ebre homologique et alg\`ebre
  commutative}.
\newblock Lecture Notes in Mathematics, Vol. 32. Springer-Verlag, Berlin, 1967.

\bibitem{SGA4}
M.~Artin, A.~Grothendieck, and J.-L. Verdier.
\newblock {\em SŽminaire de GŽomŽtrie AlgŽbrique du Bois Marie - ThŽorie des
  topos et cohomologie Žtale des schŽmas - (SGA 4)}, volume 269.
\newblock Springer-Verlag, New York, 1963-64.

\bibitem{BH}
J.~Baez and A.~Hoffnung.
\newblock Convenient categories of smooth spaces.
\newblock ePrint available: http://front.math.ucdavis.edu/0807.1704, 2008.

\bibitem{Barwick}
C.~Barwick.
\newblock On (enriched) left bousfield localizations of model categories.
\newblock ePrint available: http://arxiv.org/pdf/0708.2067.pdf, 2007.

\bibitem{Bor2}
F.~Borceux.
\newblock {\em Handbook of categorical algebra. 2}, volume~51 of {\em
  Encyclopedia of Mathematics and its Applications}.
\newblock Cambridge University Press, Cambridge, 1994.
\newblock Categories and structures.

\bibitem{Bre}
G.~E. Bredon.
\newblock {\em Topology and geometry}, volume 139 of {\em Graduate Texts in
  Mathematics}.
\newblock Springer-Verlag, New York, 1997.
\newblock Corrected third printing of the 1993 original.

\bibitem{BD}
M.~Bunge and E.~J. Dubuc.
\newblock Archimedian local {$C\sp \infty$}-rings and models of synthetic
  differential geometry.
\newblock {\em Cahiers Topologie G\'eom. Diff\'erentielle Cat\'eg.},
  27(3):3--22, 1986.

\bibitem{che}
K.~T. Chen.
\newblock Iterated path integrals.
\newblock {\em Bull. Amer. Math. Soc.}, 83(5):831--879, 1977.

\bibitem{Dub}
E.~J. Dubuc.
\newblock {$C\sp{\infty }$}-schemes.
\newblock {\em Amer. J. Math.}, 103(4):683--690, 1981.

\bibitem{DHI}
D.~Dugger, S.~Hollander, and D.~C. Isaksen.
\newblock Hypercovers and simplicial presheaves.
\newblock {\em Math. Proc. Cambridge Philos. Soc.}, 136(1):9--51, 2004.

\bibitem{Hir}
P.~S. Hirschhorn.
\newblock {\em Model categories and their localizations}, volume~99 of {\em
  Mathematical Surveys and Monographs}.
\newblock American Mathematical Society, Providence, RI, 2003.

\bibitem{Hov}
M.~Hovey.
\newblock {\em Model categories}, volume~63 of {\em Mathematical Surveys and
  Monographs}.
\newblock American Mathematical Society, Providence, RI, 1999.

\bibitem{Ill}
L.~Illusie.
\newblock {\em Complexe cotangent et d\'eformations. {I}}.
\newblock Springer-Verlag, Berlin, 1971.
\newblock Lecture Notes in Mathematics, Vol. 239.

\bibitem{Jar}
J.~F. Jardine.
\newblock Simplicial presheaves.
\newblock {\em J. Pure Appl. Algebra}, 47(1):35--87, 1987.

\bibitem{Kont}
M.~Kontsevich.
\newblock Enumeration of rational curves via torus actions.
\newblock In {\em The moduli space of curves ({T}exel {I}sland, 1994)}, volume
  129 of {\em Progr. Math.}, pages 335--368. Birkh\"auser Boston, Boston, MA,
  1995.

\bibitem{Law-Alg}
F.~W. Lawvere.
\newblock Algebraic theories, algebraic categories, and algebraic functors.
\newblock In {\em Theory of Models (Proc. 1963 Internat. Sympos. Berkeley)},
  pages 413--418. North-Holland, Amsterdam, 1965.

\bibitem{Law-Cat}
F.~W. Lawvere.
\newblock Categorical dynamics.
\newblock In {\em Topos theoretic methods in geometry}, volume~30 of {\em
  Various Publ. Ser.}, pages 1--28. Aarhus Univ., Aarhus, 1979.

\bibitem{LS}
S.~Lichtenbaum and M.~Schlessinger.
\newblock The cotangent complex of a morphism.
\newblock {\em Trans. Amer. Math. Soc.}, 128:41--70, 1967.

\bibitem{Lur-DAG1}
J.~Lurie.
\newblock Derived algebraic geometry 1.
\newblock ePrint available: http://arxiv.org/pdf/math/0608228v1.pdf, 2006.

\bibitem{Lur-HTT}
J.~Lurie.
\newblock Higher topos theory.
\newblock ePrint available: http://front.math.ucdavis.edu/0608.5040, 2006.

\bibitem{Lur-DAG5}
J.~Lurie.
\newblock Derived algebraic geometry 5.
\newblock ePrint available, http://www-math.mit.edu/~lurie/, 2008.

\bibitem{MR}
I.~Moerdijk and G.~E. Reyes.
\newblock {\em Models for smooth infinitesimal analysis}.
\newblock Springer-Verlag, New York, 1991.

\bibitem{Qui-CCR}
D.~Quillen.
\newblock On the (co-) homology of commutative rings.
\newblock In {\em Applications of Categorical Algebra (Proc. Sympos. Pure
  Math., Vol. XVII, New York, 1968)}, pages 65--87. Amer. Math. Soc.,
  Providence, R.I., 1970.

\bibitem{rezk}
C.~Rezk.
\newblock Every homotopy theory of simplicial algebras admits a proper model.
\newblock {\em Topology Appl.}, 119(1):65--94, 2002.

\bibitem{Sch}
S.~Schwede.
\newblock Spectra in model categories and applications to the algebraic
  cotangent complex.
\newblock {\em J. Pure Appl. Algebra}, 120(1):77--104, 1997.

\bibitem{ss}
S.~Schwede and B.~Shipley.
\newblock Equivalences of monoidal model categories.
\newblock {\em Algebr. Geom. Topol.}, 3:287--334 (electronic), 2003.

\bibitem{Spi}
D.~I. Spivak.
\newblock {\em Quasi-smooth Derived Manifolds}.
\newblock PhD thesis, University of California, Berkeley, 2007.

\bibitem{Sto}
R.~E. Stong.
\newblock {\em Notes on cobordism theory}.
\newblock Mathematical notes. Princeton University Press, Princeton, N.J.,
  1968.

\bibitem{TV1}
B.~To{\"e}n and G.~Vezzosi.
\newblock Homotopical algebraic geometry. {I}. {T}opos theory.
\newblock {\em Adv. Math.}, 193(2):257--372, 2005.

\bibitem{TV2}
B.~To{\"e}n and G.~Vezzosi.
\newblock Homotopical algebraic geometry. {II}. {G}eometric stacks and
  applications.
\newblock {\em Mem. Amer. Math. Soc.}, 193(902):x+224, 2008.

\bibitem{Wei}
C.~A. Weibel.
\newblock {\em An introduction to homological algebra}, volume~38 of {\em
  Cambridge Studies in Advanced Mathematics}.
\newblock Cambridge University Press, Cambridge, 1994.

\end{thebibliography}

\end{document}